\newcommand\mydef{\mathrel{\overset{\makebox[0pt]{\mbox{\normalfont\tiny\sffamily def}}}{=}}}
\newcommand{\Ll}{\mathcal{L}}
\newcommand{\Ga}{\mathcal{G}}
\newcommand{\sos}{\text{\sc{SoS}}}
\newcommand{\F}{\mathcal{F}}
\newcommand{\Cc}{\mathcal{C}}
\newcommand{\Ideal}[1]{{\textbf{I}}\left( #1 \right)}
\newcommand{\GIdeal}[1]{\left\langle #1 \right\rangle}
\newcommand{\CSP}{\textsc{CSP}}
\newcommand{\IMP}{\textsc{IMP}}
\newcommand{\Pol}{\textsf{Pol}}
\newcommand{\Max}{\textsf{Max}}
\newcommand{\Min}{\textsf{Min}}
\newcommand{\Majority}{\textsf{Majority}}
\newcommand{\Minority}{\textsf{Minority}}
\newcommand{\ThB}{\textsc{TH}}
\newcommand{\Variety}[1]{{\textbf{V}}\left( #1 \right)}
\newcommand{\spn}[1]{\left\langle #1 \right\rangle}
\newcommand{\x}{\mathbf{x}}
\newcommand{\I}{\emph{\texttt{I}}}
\newcommand{\Zz}{\mathbb{Z}}
\newcommand{\N}{\mathbb{N}}
\newcommand{\multideg}{\textnormal{multideg}}
\newcommand{\LM}{\textnormal{LM}}
\newcommand{\LT}{\textnormal{LT}}
\newcommand{\LC}{\textnormal{LC}}
\newcommand{\LCM}{\textnormal{lcm}}
\newcommand{\GB}{\text{Gr\"{o}bner }}
\newcommand{\reduce}[2]{\overline{#1}^{#2}}
\newcommand{\Field}{\mathbb{F}}
\newcommand{\Real}{\mathbb{R}}
\newcommand{\T}{\mathcal{T}}
\newtheorem{theorem}{Theorem}[section]
\newtheorem{lemma}[theorem]{Lemma}
\newtheorem{corollary}[theorem]{Corollary}
\newtheorem{proposition}[theorem]{Proposition}
\newtheorem{remark}{Remark}[section]
\newtheorem{question}[theorem]{Research Questions}
\newtheorem{example}{Example}[section]
\newtheorem{definition}{Definition}[section]
\title{The Complexity of the Ideal Membership Problem for Constrained Problems Over the Boolean Domain}
\author{ Monaldo Mastrolilli\\
\small{IDSIA, Lugano, Switzerland.}\\
\small{ monaldo@idsia.ch}}
\date{}
\begin{document}
\maketitle

\begin{abstract}
Given an ideal $\I$ and a polynomial $f$ the {Ideal Membership Problem} is to test if $f\in \I$. This problem is a fundamental algorithmic problem with important applications and notoriously intractable.

We study the complexity of the {Ideal Membership Problem} for combinatorial ideals that arise from constrained problems over the Boolean domain. As our main result, we identify the precise borderline of tractability. Our result generalizes Schaefer's dichotomy theorem [STOC, 1978] which classifies all Constraint Satisfaction Problems over the Boolean domain to be either in P or NP-hard.

This paper is motivated by the pursuit of understanding the recently raised issue of bit complexity of Sum-of-Squares proofs [O'Donnell, ITCS, 2017]. Raghavendra and Weitz [ICALP, 2017] show
how the Ideal Membership Problem tractability for combinatorial ideals implies bounded coefficients in Sum-of-Squares proofs.
\end{abstract}

\section{Introduction}

The polynomial \textsc{Ideal Membership Problem} (\IMP) is the following computational task.
Let $\Field[x_1, \ldots, x_n]$ be the ring of polynomials over a field $\Field$ and indeterminates $x_1,\ldots, x_n$ (for the applications of this paper $\Field=\Real$).
Given $f_0,f_1,\ldots,f_r\in \Field[x_1, \ldots, x_n]$ we want to decide if $f_0\in \I= \GIdeal{f_1,\ldots, f_r}$, where $\I$ is the ideal generated by $F=\{f_1,\ldots , f_r\}$.
This problem was first studied by Hilbert~\cite{Hilbert1893}, and it is a fundamental algorithmic problem with important applications in solving polynomial systems and polynomial identity testing (see e.g.~\cite{Cox:2015}).
In general, however, $\IMP$ is notoriously intractable. The results of Mayr and Meyer show that it is EXPSPACE-complete~\cite{mayr89,MAYR82}.
See~\cite{Mayr2017} and the references therein for a recent survey.

The $\IMP$ is efficiently solvable if there exist ``low-degree'' proofs of membership for the ideal $\I$ generated by $F$, namely $f_0=\sum_{f \in F} q_f \cdot f$ for some polynomials $\{q_f\in \Field[x_1, \ldots, x_n]\mid  f\in F\}$ with $q_f\cdot f$ of degree $\leq k\cdot d$, where $d$ is the degree of $f_0$ and $k$ is ``small''. If the latter holds for any given $f_0\in \I$ of degree at most $d$ then, following the notation in \cite{Weitz17}, we say that $F$ is \emph{$k$-effective}.
The Effective Nullstellensatz \cite{Hermann1926} tells us that we can take $k\leq d^{2^{O(|F|)}}$, which is not a very useful bound in practice. However, this bound is unavoidable in general because of the EXPSPACE-hardness.

If we restrict to $f_0,f_1,\ldots,f_r$ of a special form, often dramatic improvements are possible: for example if $\I$ 
is zero-dimensional, namely when the polynomial system $f_1=\dots=f_r=0$  has only a finite number of solutions, then the membership can be decided in single-exponential time~\cite{DICKENSTEIN1991}.
Moreover, the polynomial ideals that arise from combinatorial optimization problems frequently have special nice properties. For instance, these ideals are often Boolean and therefore zero-dimensional and radical.
For combinatorial problems the $\IMP$ has been studied mostly in the context of \emph{lower bounds}, see e.g. \cite{BeameIKPP94,BussP98,Grigoriev98}. In these works
a set of polynomials forming a derivation is called a \emph{Polynomial Calculus} or \emph{Nullstellensatz} proof and
the problem is referred to as the degree of Nullstellensatz proofs of membership for the input polynomial $f_0$. Clegg, Edmonds and Impagliazzo~\cite{CleggEI96} use polynomials to represent finite-domain constraints and discuss a propositional proof system based on a bounded degree version of Buchberger's algorithm~\cite{BuchbergerThesis}, called \GB proof system, for finding proofs of unsatisfiability, which corresponds to the very special case of the $\IMP$ with $f_0=1$.
%

Recently, Raghavendra and Weitz~\cite{RaghavendraW17,Weitz17} obtain \emph{upper bounds} on the required degree for Nullstellensatz proofs for several ideals arising from a number of combinatorial problems that are \emph{highly symmetric}, including Matching, TSP, and Balanced CSP.
However, their strategy is by no means universally applicable, and it had to be applied on a case-by-case basis.
Raghavendra and Weitz~\cite{RaghavendraW17,Weitz17} use the existence of low-degree Nullstellensatz proofs for combinatorial ideals to bound the bit complexity of the {Sum-of-Squares} relaxations/proof systems, as explained below.

The \emph{Sum-of-Squares} ($\sos$) proof system is a systematic and powerful approach to certifying polynomial inequalities. $\sos$ certificates can be shown to underlie a large number of algorithms in combinatorial optimization.
It has often been claimed in recent papers that one can compute a degree $d$ $\sos$ proof (if one exists) via the Ellipsoid algorithm in $n^{O(d)}$ time.
In a recent work, O'Donnell~\cite{ODonnell17} observed that this often repeated claim is far from true. O'Donnell gave an example of a polynomial system and a polynomial which had degree two proofs of non-negativity with coefficients requiring an exponential number of bits, causing the Ellipsoid algorithm to take exponential time.
On a positive note he
showed that a polynomial system whose \emph{only} constraints are the Boolean constraints
 $\{x_i^2=1 \mid i\in [n]\}$ always admit $\sos$ proofs with polynomial bit complexity and asked whether every polynomial system with Boolean constraints admits a small $\sos$ proof. This question is answered in the negative in~\cite{RaghavendraW17}, giving a counterexample and leaving open the question under which restrictions polynomial systems with Boolean constraints admit small $\sos$ proofs.
More in general O'Donnell~\cite{ODonnell17} raises the open problem to establish useful conditions under which ``small'' $\sos$ proof can be guaranteed automatically.

A first elegant approach to this question is due to Raghavendra and Weitz~\cite{RaghavendraW17} by providing a \emph{sufficient} condition on a polynomial system  that implies bounded coefficients in $\sos$ proofs.
In particular, let $Sol(\Cc)$ be the set of feasible solutions of a given Boolean combinatorial problem $\Cc$ and let $\I_\Cc$ be the vanishing ideal of set $Sol(\Cc)$. We will refer to $\I_\Cc$ as the \emph{combinatorial} ideal of $\Cc$. If a given combinatorial ideal generating set $\{f_1,\ldots,f_r\}$ (i.e. $\I_\Cc=\GIdeal{f_1,\ldots,f_r}$) is $k$-effective for constant $k=O(1)$, then they show that any polynomial $p$ that is non-negative on $Sol(\Cc)$ and that admits a degree $d$-$\sos\mod\{f_1,\ldots,f_r\}$ proof  of non-negativity~\footnote{Meaning that the non-negative polynomial $p$ over $Sol(\Cc)$ can be written as $p=\sigma+\sum_{i}f_i\cdot q_i$ where $\sigma$ is a sum of squares polynomial and $\sigma,(f_i\cdot q_i)\in \mathbb{R}[x]_{2d}$.}, is guaranteed to have a degree $k\cdot d$-$\sos\mod\{f_1,\ldots,f_r\}$ certificate with polynomial bit complexity.
So, as remarked in~\cite{RaghavendraW17},  ``the only non-trivial thing to verify is the efficiency of the polynomial calculus proof system''.
Actually, Weitz in his thesis~\cite{Weitz17} raised the following open question:
``Is there a criterion for combinatorial ideals that suffices to show that a set of polynomials admits $k$-effective derivations for constant $k$?'' and suggested to study problems without the strong symmetries discussed in his thesis and article (see the solution of the suggested starting problem from his thesis in Section~\ref{example-VC}).

Note that the sufficient criterion of Raghavendra and Weitz~\cite{RaghavendraW17} implies a somehow stronger approach: if we can efficiently compute a generating set $F=\{f_1,\ldots,f_r\}$ of $\I_C$ such that $F$ is $k$-effective with $k=O(1)$, then this gives a $k\cdot d$-$\sos\mod\{f_1,\ldots,f_r\}$  proof with polynomial bit complexity
for any polynomial that is nonnegative on $Sol(\Cc)$ and that admits a proof of non-negativity by a degree $d$-$\sos\mod\{f_1,\ldots,f_r\}$
certificate.
So the main open question with this approach is the following.
\begin{question}\label{Q:effective}
 Which restrictions on combinatorial problems can guarantee efficient computation of $O(1)$-effective generating sets?
\end{question}
The expert reader has probably realized that the efficient computation of effective generating sets  leads to the efficient construction of Theta Bodies SDP relaxations~\cite{GouveiaPT10}.
For a positive integer~$d$, the $d$-th Theta Body of an ideal $\I\in \Real[x]$ is
\begin{align*}
   \ThB_d(\I) & \mydef\{x\in \Real^n\mid \ell(x)\geq 0 \text{ for every linear }\ell \text{ that is } d\text{-}\sos\text{ mod }\I\},
\end{align*}
where a polynomial $f$ is $d$-$\sos$ mod $\I$ if there exists a finite set of polynomials $h_1,\ldots,h_t\in \Real[x]_d$ such that $f-\sum_{j=1}^{t}h_j^2\in \I$. The $d$-th Theta Body relaxation of $\I$ finds a certificate of non-negativity for $p$ which is a sum-of-squares polynomial $\sigma\in \Real[x]_{2d}$ together with a polynomial $g$ from the ideal $\I$ such that $p=\sigma+g$.

Theta Bodies are nice and elegant SDP relaxations which generalize
the Theta Body of a graph constructed by Lov\'{a}sz while studying the Shannon capacity of graphs~\cite{Lovasz79}. They are known to have several interesting properties~\cite{GouveiaPT10} and deep implications for approximation, for example they achieve the best approximation among all symmetric SDPs of a comparable size \cite{Weitz17}. 
However, to get our hands on this, we would need to be able to \emph{at least} solve the {\IMP} for combinatorial ideals up to a constant degree ({$\IMP_d$}).
For some problems $\IMP_d$ may be intractable, and so even trying to formulate the $d$-th Theta Body is intractable. Moreover, the $\IMP_d$ complexity is far from being well understood.
%
%
As a matter of fact there are only very few examples of efficiently constructible Theta Bodies relaxations.


%
For any given ideal $\I$, there is a particular kind of generating set $G=\{g_1,\ldots,g_t\}$ of the ideal~$\I$  that always admits 1-effective proofs. This set $G$ is known as {\GB basis}. More precisely, for testing the ideal $\I$ membership of a given degree-$d$ polynomial it is sufficient to compute
the set $G_d$ of polynomials with degree $\leq d$ of the reduced \GB basis $G$ for~$\I$ (assuming a grlex ordering).
This would also yield an efficient construction of the corresponding Theta Body relaxation and guarantees bounded coefficients in $\sos$ proofs.
Indeed, as shown in~\cite{GouveiaPT10}, the $d$-th Theta Body $\ThB_d(\I_\Cc)$ of a combinatorial ideal $\I_\Cc\in \Real[x]$ can be formulated as a projected spectrahedron which enables computations via Semi-Definite Programming (SDP). The SDP relaxation is derived by computing the so called $d$-th \emph{reduced moment matrix} of $\I_\Cc$ which can be obtained via \GB theory by computing the aforementioned set $G_d$ of polynomials with degree $\leq d$ of the reduced \GB basis $G$ for~$\I_\Cc$ (assuming a grlex ordering).

A {\GB basis} allows many important properties of the ideal and the associated algebraic variety to be deduced easily. \GB basis computation is one of the main practical tools for solving systems of polynomial equations. It can be seen as a multivariate, non-linear generalization of both Euclid's algorithm for computing polynomial greatest common divisors, and Gaussian elimination for linear systems.
Computational methods are an established tool in algebraic geometry and commutative algebra, the key element being the theory of \GB bases.
Buchberger~\cite{BuchbergerThesis} in his Ph.D. thesis (1965) introduced this important concept of a \GB basis and gave an algorithm for deciding ideal membership which is widely used today (see, e.g.~\cite{Cox:2015}).

The complexity of \GB bases has been the object of extensive studies (see e.g.~\cite{gbbib} and the references therein). It is well-known that in the worst-case, the complexity is doubly exponential in the number of variables, which is a consequence of the EXPSPACE-hardness of the $\IMP$ ~\cite{mayr89,MAYR82}.
These worst-case estimates have led to the unfortunately widespread belief that \GB
bases are not a useful tool beyond toy examples. However, it has been observed for a long
time that the actual behaviour of \GB bases implementations can be quite efficient.
This motivates an investigation
of the complexity of \GB basis algorithms for useful special classes of polynomial systems.

\paragraph{This paper:}
In this paper we consider vanishing ideals of feasible solutions that arise from Boolean combinatorial optimization problems.
The question of identifying restrictions to these problems which are sufficient to ensure the ideal membership tractability is important from both a practical and a theoretical viewpoint, and has an immediate application to $\sos$ proof complexity, as already widely remarked. Such restrictions may either consider the \emph{structure} of the constraints, namely which variables may be constrained by which other variables, or they may involve the \emph{nature} of the constraints, in other words, which combination of values are permitted for variables that are mutually constrained.

In this paper we take the second approach by restricting the so-called \emph{constraint language} (see Definition~\ref{def:constraint_language}), namely a set of relations that is used to form constraints. Each constraint language $\Gamma$ gives rise to a particular polynomial ideal membership problem, denoted $\IMP(\Gamma)$, and the goal is to describe the complexity of $\IMP(\Gamma)$ for all constraint languages~$\Gamma$.

This kind of restrictions on the constraint languages have been successfully applied to study the computational complexity classification (and other algorithmic properties) of the decision version of $\CSP$ over a fixed constraint language $\Gamma$ on a finite domain, denoted $\CSP(\Gamma)$ (see Section~\ref{sect:csp_intro}).
This classification started with the classic dichotomy  result of Schaefer \cite{Schaefer78} for 0/1 $\CSP$s, and culminated with the recent papers by Bulatov~\cite{Bulatov17} and Zhuk~\cite{Zhuk17}, settling the long-standing Feder-Vardi dichotomy conjecture for finite domain $\CSP$s. We refer to~\cite{2017dfu7} for an excellent survey.


\subsection{Main Results}
We begin with a formal definition of the problem.
Let $\Cc=(X,D,C)$ denote an instance of a given $\CSP(\Gamma)$,
where $X=\{x_1,\ldots,x_n\}$ is a set of $n$ variables, $D=\{0,1\}$ and $C$  is a set of constraints over $\Gamma$ with variables from $X$.
Let $Sol(\Cc)$ be the (possibly empty) set of satisfying assignments for $\Cc$, i.e.
the set of all mappings $\phi: X\rightarrow D$ satisfying all of the constraints from $C$ (see Section~\ref{sect:csp_intro} for additional details).

The \emph{combinatorial ideal} $\I_\Cc$ is the vanishing ideal of set $Sol(\Cc)$, namely it is the set of all polynomials that vanish on $Sol(\Cc)$: $f\in \I_\Cc$ if and only if $f(a_1,\ldots,a_n)=0$ for all $(a_1,\ldots,a_n)\in Sol(\Cc)$. Note that  $Sol(\Cc)=\Variety{\I_\Cc}$, namely the set of satisfying assignments corresponds to the variety of $\I_\Cc$  (see Section~\ref{sect:idealCSP} for additional details).
\begin{definition}\label{def:IMP}
 The {\emph{\textsc{Ideal Membership Problem}}} associated with language $\Gamma$ is the problem $\emph{\IMP}(\Gamma)$ in which
 the input consists of a polynomial $f\in \Field[X]$ and a $\emph{\CSP}(\Gamma)$ instance $\Cc=(X,D,C)$.
The goal is to decide 
 whether $f$ lies in the combinatorial ideal~$\I_\Cc$.
We use $\IMP_d(\Gamma)$ to denote $\IMP(\Gamma)$ when the input polynomial $f$ has degree at most $d$.
\end{definition}
Observe that $\IMP(\Gamma)$ belongs to the complexity class coNP.
Indeed, for any `no-instance', namely $f\not\in \I_\Cc$, there is a certificate which a polynomial-time algorithm can use to verify that $f\not\in \I_\Cc$. For example, such a certificate is given by an element of the corresponding variety $Sol(\Cc)=\Variety{\I_\Cc}$ that is not a zero of $f$. Note that if the instance $\Cc$ has no solution, i.e. $Sol(\Cc)=\Variety{\I_\Cc}=\emptyset$, then it is vacuously true that there is no element from  $\Variety{\I_\Cc}$ that is not a zero of $f$, whatever is $f$. 
It follows (see the Weak Nullstellensatz~\eqref{eq:weak_nstz}) that a given $\CSP(\Gamma)$ instance $\Cc$ has a solution if and only if $1\not\in \I_\Cc$. In other words, $\CSP(\Gamma)$ is equivalent to not-$\IMP_0(\Gamma)$.

In this paper we consider the question of identifying restrictions on the constraint language $\Gamma$ which ensure the $\IMP(\Gamma)$ tractability.
The complexity of $\IMP_d(\Gamma)$, for any $d\geq 0$, is obtained by arguing along the following lines.
\begin{itemize}
    \item[$(d=0)$] Schaefer's Dichotomy result \cite{Schaefer78} gives necessary and sufficient conditions on $\Gamma$ to ensure the tractability of $\CSP(\Gamma)$. 
    According to Schaefer's result (see Lemma~\ref{th:post} and Theorem~\ref{th:schaefer} below), the problem $\CSP(\Gamma)$ is polynomial-time tractable if the solution space of every relation in $\Gamma$ is closed under one of the following six operations (otherwise it is NP-complete): the ternary $\Majority$ operation, the ternary $\Minority$ operation, the $\Min$ operation, the $\Max$ operation, the constant operation $0$, the constant operation $1$.
    Since  $\CSP(\Gamma)$ is equivalent to not-$\IMP_0(\Gamma)$, it follows that Schaefer's Dichotomy result fully determines whether $\IMP_0(\Gamma)$ is polynomial-time solvable or coNP-complete. 
    \item[$(d\geq 1)$] Trivially, if $\IMP_0(\Gamma)$ is coNP-complete then $\IMP_d(\Gamma)$ is coNP-complete for every $d\geq 0$. It follows that in order to determine the complexity of $\IMP_d(\Gamma)$, for $d\geq 1$, the languages that still need to be discussed are those $\Gamma$ for which the corresponding problem $\CSP(\Gamma)$ is polynomial-time tractable. 
\end{itemize}

If the solution space of every relation in $\Gamma$ is closed under the ternary $\Minority$ operation, then in \cite{BM20} Bharathi and the author show that $\IMP_d(\Gamma)$ can be solved in $n^{O(d)}$ time for any $d\geq 1$.\footnote{In an earlier version of this paper appeared in SODA'19, I incorrectly declared that this case have been resolved by a previous result. I thank Andrei Bulatov, Akbar Rafiey, Standa \v{Z}ivn\'{y} and the anonymous referees for pointing out this issue.}

In this paper we prove the following theorem.
\begin{theorem}\label{th:result1}
 Let $\Gamma$ be a finite Boolean constraint language. If the solution space of every relation in $\Gamma$ is closed under any of $\{\Majority,\Max,\Min\}$ operations then $\IMP_d(\Gamma)$ can be solved in $n^{O(d)}$ time for any $d\geq 1$.
\end{theorem}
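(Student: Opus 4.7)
The plan is to prove the theorem by treating the three polymorphisms $\Majority$, $\Min$, and $\Max$ separately, using Schaefer's classification (Lemma~\ref{th:post} and Theorem~\ref{th:schaefer}) to pin down the structure of the relations in each case, and then exhibiting, for each case, an $O(1)$-effective generating set $F$ of the combinatorial ideal~$\I_\Cc$. Once $F$ is $k$-effective for some constant $k$, the algorithm for $\IMP_d(\Gamma)$ is the Raghavendra--Weitz linear-algebra procedure sketched in the introduction: one sets up a linear system whose unknowns are the symbolic coefficients of a candidate Nullstellensatz certificate $f=\sum_i q_i g_i$ with $g_i\in F$ and $\deg(q_i g_i)\le kd$, and solves it by Gaussian elimination; since there are only $n^{O(kd)}=n^{O(d)}$ monomials of degree at most $kd$, the whole computation runs in $n^{O(d)}$ time.

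For the $\Majority$ case every relation in $\Gamma$ is bijunctive, hence expressible as a conjunction of $2$-clauses and unary atoms. I would take $F$ to consist of $\{x_i^2-x_i\}_{i\in[n]}$ together with a degree-at-most-two polynomial for each $2$-clause and each unary atom of the instance $\Cc$. The $O(1)$-effectiveness argument would traverse the implication graph of the $2$-CNF: after collapsing strongly connected components into equivalence classes and propagating forced constants, the residual ideal is generated by linear polynomials $x_i-x_j$ and $x_i-c$, for which $1$-effectiveness is immediate. Reducing an arbitrary degree-$d$ polynomial $f\in\I_\Cc$ to this residual form along implication chains should increase the multiplier degrees by at most an additive constant per step, and since only $O(d)$ different monomials of $f$ can be witnessed simultaneously, this yields $O(d)$ overall.

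For $\Min$ (and symmetrically $\Max$ via $x_i\mapsto 1-x_i$), the relations are Horn and $Sol(\Cc)$ is closed under coordinate-wise meet, so whenever non-empty it forms a meet-semilattice with a unique minimum element (computable in polynomial time by unit-resolution closure). The crucial step is to avoid the na\"ive polynomial encoding $\prod_{i\in T}x_i\cdot(1-y)$ of a Horn clause $\bigwedge_{i\in T}x_i\Rightarrow y$, which has degree equal to the clause length and is a priori unbounded. Instead I would use an equivalent family of constant-degree polynomials, one per adjacent pair in the semilattice, whose degree is bounded by a function of the maximum arity of $\Gamma$. Proving $O(1)$-effectiveness then reduces to showing by induction on the semilattice that every degree-$d$ element of $\I_\Cc$ is generated by these one-step implication polynomials with multipliers of degree $O(d)$.

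The main obstacle in both halves of the argument is establishing this uniform bound on multiplier degrees. A certificate for a given degree-$d$ polynomial could in principle chain together long sequences of implications, blowing up the multiplier degree with $n$; the crux is to show that the closure of the generators under the polymorphism action of $\Gamma$ absorbs such long chains, so that every necessary $S$-polynomial reduction against the Boolean ideal $\langle x_i^2-x_i\rangle$ produces a polynomial whose degree exceeds that of its inputs by only an additive constant depending on the arities in $\Gamma$. I expect the proof of this combinatorial "bounded syzygy'' property, carried out by a careful case analysis driven by the polymorphism closure in each of the three cases, to be the technical heart of the argument; once it is in hand, the $n^{O(d)}$ running time is immediate from the Raghavendra--Weitz search for bounded-degree certificates.
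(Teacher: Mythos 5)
Your plan stands or falls on the claim you yourself flag as the technical heart: that the natural generating sets (quadratic polynomials for the $2$-clauses in the $\Majority$ case, clause polynomials or ``one-step implication'' polynomials in the Horn/dual-Horn cases) are $O(1)$-effective, so that a Raghavendra--Weitz style search for certificates of degree $O(d)$ is complete. That claim is false, not merely unproven. The obstruction is exactly the implication-chain phenomenon you wave at with ``an additive constant per step'': for the instance $x_1=1$, $x_i\Rightarrow x_{i+1}$ ($i<n$) -- which is simultaneously a $2$-SAT and a Horn instance, hence closed under $\Majority$, $\Min$ and $\Max$ -- the linear polynomial $x_n-1$ lies in $\I_\Cc$, but its Nullstellensatz degree over the clause generators together with $\{x_i^2-x_i\}$ grows with $n$ (this is the induction principle of Buss and Pitassi, with degree $\Theta(\log n)$). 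Each chaining step costs only a constant \emph{incrementally}, but the multipliers accumulate, and the total certificate degree is not $O(d)$. So Gaussian elimination over monomials of degree $\le kd$ with constant $k$ will report ``not in the ideal'' for polynomials that are in the ideal. Your $\Majority$ preprocessing does not repair this: after collapsing strongly connected components and propagating constants, the residual ideal is \emph{not} generated by linear polynomials (e.g.\ the clause $\neg x\vee\neg y$ forces the genuinely quadratic generator $xy$, and chained implications force quadratics like $x_ax_b$ that again have no constant-degree derivation from the individual $2$-clause polynomials). In the $\Min$/$\Max$ case the proposed ``one polynomial per adjacent pair of the semilattice'' family is also problematic on its own terms: $Sol(\Cc)$ can have exponentially many elements, so this set is neither small nor obviously computable.

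The paper takes a genuinely different route precisely to avoid this: it does not look for bounded-degree certificates over a fixed constant-degree generating set, but computes (a truncated part of) the instance-dependent \emph{reduced \GB basis} in grlex order, and then decides membership by division. For $\Majority$ it shows the family $\mathcal{B}\cup\mathcal{Q}\cup\mathcal{L}\cup\mathcal{Z}$ of $2$-terms polynomials of degree at most $2$ is closed under multi-linearized division and under the special $S$-polynomial $S(f,g)^*$ of the Interlacing Lemma, so Buchberger's algorithm stabilizes after $O(n^2)$ additions and yields the full reduced basis in polynomial time. For $\Min$/$\Max$ the reduced basis consists of positive/negative $2$-terms polynomials of possibly large degree, so the paper instead enumerates all $n^{O(d)}$ candidate $2$-terms polynomials of degree $\le d$ and tests each for membership \emph{semantically} (via the Strong Nullstellensatz and radicality, membership reduces to solvability of the Horn/dual-Horn instance with $O(d)$ fixed variables, still polynomial), obtaining the truncated basis $G_d$, which suffices for degree-$d$ membership under a graded order. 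If you want to salvage your approach, the certificate search would have to be run against such a computed basis (which is $1$-effective) rather than against the natural generators -- at which point you have essentially reconstructed the paper's algorithm and still owe the structural lemmas that make that basis computable.
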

This yields an efficient algorithm for the membership problem  since the size of the input polynomial $f$ is $n^{O(d)}$. We remark that ``sparse'' polynomials are also discussed in this paper to some extent, however not in their full generality.
This permits us to avoid certain technicalities and discussion of how polynomials are represented. Moreover, these cases are not of prime interest for the $\sos$ applications that we have in mind where $d=O(1)$.

By Theorem~\ref{th:result1}, \cite[Theorem 1.1]{BM20} and Schaefer's Dichotomy Theorem~\ref{th:schaefer}, it follows that the only case that has yet to be investigated in order to understand the complexity of $\IMP_d(\Gamma)$ is when the solution space of every relation in $\Gamma$ is closed under the constant operation $c\in \{0,1\}$, but not simultaneously closed under any of $\Majority$, $\Minority$, $\Min$ or $\Max$ operations (otherwise $\IMP_d(\Gamma)$ is polynomial-time tractable). Under the latter case, note that $\CSP(\Gamma)$ is polynomial-time tractable either when the solution space of every relation in $\Gamma$ is closed under only \emph{one} constant operation or when it is closed, simultaneously, under \emph{both} constant operations $1$ and $0$.  These are the only cases left that are discussed in the following result that is proved within this paper.

\begin{theorem}\label{th:hardness}
  Let $\Gamma_1$ be a Boolean language with the solution space of every constraint closed under one constant operation $c \in\{0,1\}$.
  Let $\Gamma_2$ be a Boolean language with the solution space of every constraint closed under both constant operations $1$ and $0$. Assume that $\Gamma_1$ and $\Gamma_2$ are not simultaneously closed under any of $\Majority$, $\Minority$, $\Min$ or $\Max$ operations. Then, for $i\in\{1,2\}$, the problem $\IMP_i(\Gamma_i)$ is coNP-complete. If the constraint language $\Gamma_2$ has the operation $\neg$ as a polymorphism then the problem $\IMP_1(\Gamma_2)$ is coNP-complete.
\end{theorem}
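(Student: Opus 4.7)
Throughout, membership of $\IMP(\Gamma)$ in $\mathrm{coNP}$ has already been observed after Definition~\ref{def:IMP} (an element of $\Variety{\I_\Cc}$ on which $f$ does not vanish is a polynomial-time certificate), so in each of the three clauses the task reduces to proving the matching lower bound. My plan is uniform: in every regime I exhibit a fixed low-degree polynomial $f$ whose non-membership in $\I_\Cc$ encodes an explicit decision problem about the structure of $Sol(\Cc)$, and then reduce a Schaefer-hard Boolean CSP to that decision problem by primitive positive (ppp) gadget constructions over $\Gamma_i$.

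For the first statement, I may assume after the substitution $x\mapsto 1-x$ that $\Gamma_1$ is $0$-valid, so $0^n\in Sol(\Cc)$ for every $\Gamma_1$-instance. Taking the degree-one polynomial $f = x_1+\cdots+x_n$ gives $f\in\I_\Cc$ iff $Sol(\Cc)=\{0^n\}$, so non-$\IMP_1(\Gamma_1)$ becomes the \emph{another-SAT} problem over $\Gamma_1$. Under the hypothesis, $\Pol(\Gamma_1)$ contains constant $0$ but none of $\Majority,\Minority,\Min,\Max$ or constant $1$ (hence also not $\neg$, since $\neg$ composed with constant $0$ would generate constant $1$), and Post's lattice then pins $\Pol(\Gamma_1)$ into a narrow family of sub-clones whose relational counterpart is expressive enough to ppp-define an NP-hard Schaefer language (say positive $1$-in-$3$-SAT) once a fresh variable $z$ is pegged to the value $1$; the pegging is performed by a $\Gamma_1$-gadget that lights up only on non-trivial assignments. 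The second statement is parallel: $\Gamma_2$ is both $0$- and $1$-valid so $\{0^n,1^n\}\subseteq Sol(\Cc)$ is automatic, and I use the degree-two polynomial $f=\bigl(\sum_i x_i\bigr)\bigl(n-\sum_i x_i\bigr)$, which vanishes precisely on $0^n$ and $1^n$, giving $f\in\I_\Cc$ iff $Sol(\Cc)\subseteq\{0^n,1^n\}$. The complementary \emph{non-trivial SAT} problem is NP-hard by the same ppp-definability route: this time I can peg a pair of fresh variables directly to $0$ and to $1$ (both constants being legal polymorphisms) and ppp-define an NP-hard Schaefer CSP over the pegged variables.

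For the third statement, the extra assumption that $\neg$ is a polymorphism of $\Gamma_2$ makes $Sol(\Cc)$ closed under componentwise complementation. Choosing two distinguished variables $u,v$ of the constructed instance and taking the degree-one polynomial $f=x_u-x_v$ yields $f(\bar s)=-f(s)$, so $f$ vanishes on $0^n$ and on $1^n$ trivially, and $f\in\I_\Cc$ iff $s_u=s_v$ in every $s\in Sol(\Cc)$. Thus $\IMP_1(\Gamma_2)$ encodes the coNP question ``in every solution, $s_u=s_v$?'', and I plan to reduce monotone NAE-$3$-SAT to its complement by using the $\neg$ polymorphism together with the two constant polymorphisms to ppp-define the NAE-$3$ relation from $\Gamma_2$ and to wire the two NAE ``poles'' onto $u$ and $v$. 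The principal technical obstacle throughout the proof is precisely this ppp-definability/gadget step: Schaefer's dichotomy does \emph{not} deliver it directly, since $\CSP(\Gamma_i)$ is tractable in each of the three regimes thanks to the constant polymorphism(s); I must therefore invoke the Post-lattice classification together with Creignou--Khanna--Sudan-style pegging gadgets to confirm that each $\Gamma_i$ retains enough expressive power once the trivial (or symmetric) solutions have been excluded from consideration.
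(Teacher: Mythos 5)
The coNP upper bound and the choice of $f=x_u-x_v$ in the third clause match the paper, but your lower-bound reductions have genuine gaps. In the first two clauses you reduce non-membership to ``does the instance have a solution other than the trivial constant one(s)?'' (via $f=\sum_i x_i$, resp.\ $f=\bigl(\sum_i x_i\bigr)\bigl(n-\sum_i x_i\bigr)$) and then assert NP-hardness of that problem by ``pegging'' fresh variables to $1$, resp.\ to $0$ and $1$, through $\Gamma_i$-gadgets. That pegging is exactly what cannot be done: every relation pp-definable from $\Gamma_1$ (resp.\ $\Gamma_2$) is preserved by its constant polymorphism(s), so no conjunction of $\Gamma_i$-constraints can force a variable to the value $1-c$ (resp.\ force two variables to $0$ and $1$); your remark that the pegging is possible ``both constants being legal polymorphisms'' has it backwards, since the constants are precisely the obstruction. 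Consequently ``there is a nontrivial solution'' is not equivalent to ``the pegged instance is satisfiable'' without an additional expressibility/hardness result (NP-hardness of the another-solution problem for these languages), which you neither prove nor can obtain by pp-definitions; this is the crux and it is left open in your sketch. The paper avoids it entirely by letting the input polynomial do the pegging: it starts from the singleton expansions $\Lambda_1=\Gamma_1\cup\{(1-c)\}$ and $\Lambda_2=\Gamma_2\cup\{(0),(1)\}$, whose $\CSP$s are NP-complete directly by Schaefer's theorem, identifies all pegged variables into a single variable $x_i$ (resp.\ two variables $x_a,x_b$), and takes $f=x_i-c$ (resp.\ $f=x_b(1-x_a)$), so that $f\notin\I_\Cc$ holds if and only if some solution realizes the pegging, i.e.\ if and only if the $\Lambda_i$-instance is satisfiable.

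In the third clause the plan to pp-define the NAE-$3$ relation from $\Gamma_2$ cannot work: that relation contains neither $(0,0,0)$ nor $(1,1,1)$, so it is not invariant under the constant polymorphisms of $\Gamma_2$ and hence is not pp-definable from $\Gamma_2$ (pp-definitions preserve all polymorphisms). Moreover, for the general statement you cannot argue through one concrete hard language anyway; exhibiting a specific $\Gamma_2$ with $\IMP_1(\Gamma_2)$ coNP-complete is a separate task, which the paper handles later with a padded relation that adds the two constant tuples back to a NAE-type relation. For the clause as stated, the paper again reduces from $\CSP(\Lambda_2)$: with $f=x_a-x_b$, a witness of $f\notin\I_{\Cc_2}$ is a solution $g$ with $g(x_a)\neq g(x_b)$, and the $\neg$ polymorphism is used exactly to flip a solution with $g(x_a)=1$, $g(x_b)=0$ into one with $g(x_a)=0$, $g(x_b)=1$, recovering satisfiability of the $\Lambda_2$-instance. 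Your draft has the right polynomial and observes the complementation symmetry, but the hardness source and the gadget direction need to be replaced by this argument.
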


The only case that Theorem~\ref{th:hardness} does not cover is the complexity of $\IMP_1(\Gamma_2)$ when $\Gamma_2$ is not simultaneously closed under any of $\{\Majority,\Minority,\Min,\Max,\neg\}$. Note that $\IMP_2(\Gamma_2)$ is coNP-complete but we do not know if $\IMP_1(\Gamma_2)$ is coNP-complete as well. However, we do not expect that $\IMP_1(\Gamma_2)$ is solvable in polynomial time (see the discussion in Section~\ref{sect:necessary}). We leave this problem as an open question.

Putting together the results from \cite{BM20} and \cite{Schaefer78} with Theorem~\ref{th:result1} and Theorem~\ref{th:hardness} 
we obtain the following corollary.

\begin{corollary}\label{th:summary}
 Let $\Gamma$ be a finite Boolean constraint language. If the solution space of every relation in $\Gamma$ is closed under any of $\{\Majority,\Minority,\Max,\Min\}$ operations, then $\IMP_d(\Gamma)$ can be solved in $n^{O(\max(d,1))}$ time for any $d\geq 0$.
 Otherwise there is a constant $d\in\{0,1,2\}$ such that $\IMP_d(\Gamma)$ is coNP-complete. 
\end{corollary}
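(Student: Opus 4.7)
The plan is to combine the four cited results—Schaefer's Dichotomy (Theorem~\ref{th:schaefer}), Theorem~\ref{th:result1}, \cite[Theorem 1.1]{BM20}, and Theorem~\ref{th:hardness}—via a case analysis on which operations preserve all relations of $\Gamma$. The corollary collapses into two mutually exclusive cases (tractable vs.\ coNP-complete), so the verification is essentially a bookkeeping argument once the case split is set up correctly.

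For the tractable direction, I would assume $\Gamma$ is preserved by at least one of $\{\Majority,\Minority,\Max,\Min\}$. Schaefer's theorem gives that $\CSP(\Gamma)$ is polynomial-time tractable, and since $\CSP(\Gamma)$ is equivalent to not-$\IMP_0(\Gamma)$, the $d=0$ case is handled in polynomial time. For $d\geq 1$ I would branch on the preserving operation: if $\Gamma$ is preserved by one of $\{\Majority,\Max,\Min\}$, then Theorem~\ref{th:result1} provides the $n^{O(d)}$ bound; if $\Gamma$ is preserved by $\Minority$, then \cite[Theorem 1.1]{BM20} provides it. Together these yield $n^{O(\max(d,1))}$ uniformly for $d\geq 0$.

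For the hard direction, I would assume $\Gamma$ is preserved by none of $\{\Majority,\Minority,\Max,\Min\}$, and split on which constant polymorphisms (if any) $\Gamma$ admits. If $\Gamma$ admits no constant polymorphism, then by Schaefer $\CSP(\Gamma)$ is NP-complete, so $\IMP_0(\Gamma)$ is coNP-complete and I would take $d=0$. Otherwise we are exactly in the hypothesis of Theorem~\ref{th:hardness}: if $\Gamma$ is preserved by exactly one of the two constants, applying Theorem~\ref{th:hardness} with $\Gamma_1=\Gamma$ yields coNP-completeness of $\IMP_1(\Gamma)$, giving $d=1$; if $\Gamma$ is preserved by both constants, applying Theorem~\ref{th:hardness} with $\Gamma_2=\Gamma$ yields coNP-completeness of $\IMP_2(\Gamma)$, giving $d=2$.

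The only step requiring any care is checking that the case split above exhausts all finite Boolean constraint languages and that, in each invocation of Theorem~\ref{th:hardness}, the hypothesis ``not simultaneously closed under any of $\Majority,\Minority,\Min,\Max$'' is inherited from our standing assumption in the hard direction. I expect no real obstacle here, since the branching on $\{$closed under no constant, closed under exactly one constant, closed under both constants$\}$ is a trichotomy and the restrictions on non-constant polymorphisms are preserved throughout; the only conceptual content has already been done inside Theorem~\ref{th:result1}, Theorem~\ref{th:hardness}, \cite{BM20}, and \cite{Schaefer78}.
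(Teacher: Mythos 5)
Your proposal is correct and follows essentially the same route as the paper, which obtains the corollary exactly by assembling Schaefer's dichotomy (for the $d=0$ case and the no-polymorphism case), Theorem~\ref{th:result1} for $\{\Majority,\Max,\Min\}$, \cite{BM20} for $\Minority$, and Theorem~\ref{th:hardness} for the remaining constant-polymorphism cases, with the same trichotomy on constants giving $d\in\{0,1,2\}$. Your care in applying Theorem~\ref{th:hardness} with $\Gamma_1$ only in the ``exactly one constant'' case and $\Gamma_2$ in the ``both constants'' case matches the paper's intended reading, so there is no gap.
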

 
Finally, the notion of \emph{pp-definability} is central in $\CSP$ theory.
We conclude the paper by discussing the correspondence between pp-definability and elimination ideals in  algebraic geometry.

\subsubsection{Some Applications}
In \cite{BM20} (for $\Minority$) and in the proof of Theorem~\ref{th:result1} it is shown that if the solution space of every relation in $\Gamma$ is closed under any of $\{\Majority,\Minority,\Max,\Min\}$ operations, then for any given $\emph{\CSP}(\Gamma)$ instance~$\Cc$
we can efficiently compute the bounded degree polynomials of the reduced {\GB} basis (assuming a grlex ordering) for the combinatorial ideal $\I_\Cc$.\footnote{Note that this is considerably different from the  bounded degree version of Buchberger's algorithm considered in~\cite{CleggEI96}.} This set of polynomials is 1-effective for $\I_\Cc$. Note that if the aforementioned conditions are not met, then the sufficient criteria by Raghavendra and Weitz~\cite{RaghavendraW17} cannot be efficiently applied for the instances of $\CSP(\Gamma)$. So we obtain an answer to Question~\ref{Q:effective} for constraint language problems.
 \begin{corollary}
   For Boolean constraint languages $\Gamma$, if the solution space of every relation in $\Gamma$ is closed under any of $\{\Majority,\Minority,\Max,\Min\}$ operations, then a 1-effective generating set for the combinatorial ideal $\I_\Cc$ can be computed in $n^{O(d)}$ time, for any given $\emph{\CSP}(\Gamma)$ instance~$\Cc$ and for all input polynomials of degree at most $d\geq 1$. Otherwise, computing $O(1)$-effective generating sets is coNP-complete for some $d\in\{0,1,2\}$.
 \end{corollary}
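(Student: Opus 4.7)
The positive direction essentially repackages what the paper has already established. By Theorem~\ref{th:result1} for the $\Max,\Min,\Majority$ cases and by \cite{BM20} for the $\Minority$ case, whenever the solution space of every relation in $\Gamma$ is closed under one of $\{\Majority,\Minority,\Max,\Min\}$, for any $\CSP(\Gamma)$ instance $\Cc$ one can compute, in $n^{O(d)}$ time, the set $G_d$ of polynomials of degree at most $d$ belonging to the reduced \GB basis $G$ of $\I_\Cc$ under a grlex ordering. So the positive half of the corollary reduces to arguing that $G_d$ is a 1-effective generating set for membership testing of polynomials of degree at most $d$.

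The key observation is that with grlex, every $g\in G$ satisfies $\deg(g)=\deg(\LT(g))$, so the multivariate division algorithm applied to an input $f_0$ of degree $d$ produces quotients $q_g$ with $\deg(q_g\cdot g)\leq \deg(f_0)\leq d$, and it only ever divides by elements $g\in G$ with $\deg(g)\leq d$, i.e.\ elements of $G_d$. Since the remainder of $f_0$ modulo $G$ is zero exactly when $f_0\in \I_\Cc$, this shows that $G_d$ is a 1-effective generating set for the purpose of degree-$d$ membership in $\I_\Cc$. Together with the $n^{O(d)}$ bound on $|G_d|$ this settles the tractable side.

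For the negative direction I would argue by contradiction. Suppose that for some constraint language $\Gamma$ \emph{not} closed under any of $\{\Majority,\Minority,\Max,\Min\}$ one could compute, in time polynomial in $|\Cc|$, an $O(1)$-effective generating set $F=\{f_1,\ldots,f_r\}$ for $\I_\Cc$ valid for inputs of degree at most $d$. Since $F$ is $k$-effective with $k=O(1)$, checking $f_0\in\I_\Cc$ for a given degree-$d$ polynomial reduces to the existence of polynomials $q_i$ with $\deg(q_i\cdot f_i)\leq kd$ such that $f_0=\sum_i q_i f_i$; this is a linear feasibility problem on the $n^{O(kd)}=n^{O(d)}$ unknown coefficients of the $q_i$, solvable in polynomial time. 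Hence $\IMP_d(\Gamma)$ would be in P.

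However, by Theorem~\ref{th:hardness} together with Schaefer's dichotomy for the $d=0$ case, any such $\Gamma$ makes $\IMP_d(\Gamma)$ coNP-complete for some $d\in\{0,1,2\}$. Therefore computing an $O(1)$-effective generating set in polynomial time would collapse $\mathrm{coNP}$ to $\mathrm{P}$; equivalently, the computational task is coNP-hard (membership in coNP is inherited from $\IMP_d(\Gamma)$ via the same linear-algebraic verification, since a ``no'' certificate for the generating set problem can be exhibited by a point of $Sol(\Cc)$ that is not a zero of some required polynomial). The main thing to be careful about will be making the negative statement precise: in particular, phrasing the generating-set computation as a genuine decision problem in coNP, and ensuring that the $O(1)$-effective set is guaranteed to have polynomial size (so that the linear-algebraic membership test really runs in polynomial time) — once these are pinned down the reduction to Theorem~\ref{th:hardness} is routine.
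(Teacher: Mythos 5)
Your proposal is correct and follows essentially the same route as the paper: the positive half is exactly the paper's observation that the truncated reduced \GB{} basis $G_d$ (computable in $n^{O(d)}$ time by Theorem~\ref{th:result1} and \cite{BM20}) is 1-effective for degree-$d$ inputs under a grlex order via Proposition~\ref{th:gbprop}, and the negative half is the paper's implicit reduction that an efficiently computable $O(1)$-effective set would make $\IMP_d(\Gamma)$ polynomial-time solvable by linear algebra, contradicting Theorem~\ref{th:hardness} and Schaefer's theorem. Your write-up merely spells out details the paper leaves implicit, so no gap to report.
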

Moreover, our result implies necessary and sufficient conditions (assuming $P\not = NP$) for the efficient computation of Theta Body SDP relaxations, identifying therefore the borderline of tractability for constraint language problems.
This is summarized by the following corollary.

\begin{corollary}
For Boolean constraint languages $\Gamma$, if the solution space of every relation in $\Gamma$ is closed under any of $\{\Majority,\Minority,\Max,\Min\}$ operations, then  $\ThB_d(\I_\Cc)$ can be \emph{formulated} and \emph{solved} in $n^{O(d)}$ time (to high accuracy, with polynomial bit complexity) for any given $\emph{\CSP}(\Gamma)$ instance~$\Cc$. Otherwise, \emph{formulating} the $d$-th Theta Body SDP relaxation is coNP-complete for some $d\in\{0,1,2\}$.
\end{corollary}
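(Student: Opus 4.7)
The plan is to derive both directions of the dichotomy from the immediately preceding corollary on $O(1)$-effective generating sets, combined with the Gouveia--Parrilo--Thomas construction of Theta Bodies from \GB bases and the Raghavendra--Weitz bit complexity lemma.

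For the tractable side, suppose $\Gamma$ is preserved by one of $\{\Majority,\Minority,\Max,\Min\}$. First, by the preceding corollary (which aggregates Theorem~\ref{th:result1} and the $\Minority$ result of~\cite{BM20}), for any $\CSP(\Gamma)$ instance $\Cc$ the set $G_{2d}$ of polynomials of degree $\leq 2d$ of the reduced \GB basis of $\I_\Cc$ under grlex ordering can be computed in $n^{O(d)}$ time, and this set is $1$-effective on inputs of degree $\leq 2d$. By~\cite{GouveiaPT10} the $d$-th reduced moment matrix of $\I_\Cc$ is obtained by normal-form reduction of each degree-$\leq 2d$ monomial product against $G_{2d}$, yielding a projected spectrahedral description of $\ThB_d(\I_\Cc)$ of size $n^{O(d)}$. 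Invoking the Raghavendra--Weitz bit complexity lemma on the $1$-effective set $G_{2d}$ then shows that any linear inequality valid on $\ThB_d(\I_\Cc)$ admits a $2d$-$\sos$ mod $G_{2d}$ certificate of polynomial bit length, so the Ellipsoid (or a suitable interior-point) algorithm solves the SDP to accuracy $\eps$ in time polynomial in $n^d$ and $\log(1/\eps)$.

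For the hardness side, suppose $\Gamma$ is preserved by none of $\{\Majority,\Minority,\Max,\Min\}$. Corollary~\ref{th:summary} supplies a value $d_0\in\{0,1,2\}$ for which $\IMP_{d_0}(\Gamma)$ is coNP-complete. I would reduce $\IMP_{d_0}(\Gamma)$ to the formulation task for $\ThB_{d_0}(\I_\Cc)$ as follows: the Gouveia--Parrilo--Thomas description encodes a basis of $\Field[X]_{\leq d_0}/\I_\Cc$ together with the linear rewrite rules that put any degree-$\leq d_0$ polynomial into its normal form; given such a formulation one can reduce an arbitrary input $f$ of degree $\leq d_0$ in polynomial time and check whether the normal form is $0$, deciding $f\in\I_\Cc$. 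The degenerate case $d_0=0$ is handled directly: when $\Cc$ is unsatisfiable $\I_\Cc=\Field[X]$ and $\ThB_0$ is empty, so the mere ability to tell whether the SDP is non-vacuous already solves $\CSP(\Gamma)$, which is NP-hard under the assumption.

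The main obstacle is the hardness direction, where the informal identification ``formulating $\ThB_d$ $\Leftrightarrow$ deciding degree-$d$ ideal membership'' must be made into a clean Karp/Cook reduction. I would lean on the explicit moment-matrix construction of~\cite{GouveiaPT10}, whose rows and columns are indexed by a basis of $\Field[X]_{\leq d_0}/\I_\Cc$: producing the SDP forces one to exhibit this basis together with the reduction rules, and these rules answer $\IMP_{d_0}$ for free. Membership of the formulation task in coNP follows from the coNP membership of $\IMP$ discussed after Definition~\ref{def:IMP}, completing the dichotomy.
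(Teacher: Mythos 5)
Your proposal is correct and follows essentially the route the paper intends: the paper states this corollary without a separate proof, as a direct consequence of the efficiently computable truncated reduced \GB{} basis (1-effective) combined with the reduced-moment-matrix formulation of $\ThB_d(\I_\Cc)$ from~\cite{GouveiaPT10} and the Raghavendra--Weitz bit-complexity bound for the tractable side, and the coNP-completeness of $\IMP_d(\Gamma)$ from Corollary~\ref{th:summary} (formulating the moment matrix encodes normal forms, hence decides $\IMP_d$) for the hardness side. Your elaboration, including the degenerate $d=0$ case, matches this intended argument.
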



\paragraph{Paper Structure:}
Throughout this paper we assume that the reader has some basic knowledge of both, $\CSP$ over a constraint language and algebraic geometry, more specifically \GB bases.
We use notation and basic properties as in standard textbooks and literature~\cite{Cox:2015,2017dfu7}.
However, in order to make this article as self-contained as possible and accessible to non-expert readers,
Section~\ref{sect:preliminaries} provides the essential context needed with the adopted notation. We recommend the non-expert reader to start with that section.
More precisely, Section~\ref{sect:csp_intro} gives a brief introduction to $\CSP$ over a constraint language with its algebra of polymorphisms. We refer to~\cite{Chen09,2017dfu7} for more details.
Section~\ref{sect:GBbasics} provides some rudiments of \GB bases and a coverage of the adopted notation. We refer to \cite{Cox:2015} for a more satisfactory introduction and for the missing details. The link between polynomial ideals and $\CSP$ is given in Section~\ref{sect:idealCSP}.

The main theorems of this paper, namely Theorem~\ref{th:result1} and Theorem~\ref{th:hardness}, give \emph{sufficient} and \emph{necessary} conditions to ensure the {Ideal Membership Problem} tractability. The sufficiency part is discussed in sections~\ref{sect:tract}, \ref{sect:majority} and \ref{sect:min_max}, with Section~\ref{sect:tract} giving an overview of the proof, main ideas and techniques. In particular in Section~\ref{sect:interlacing} it is provided a technical lemma that will be at the heart of the subsequent proofs. We believe that this lemma will be useful for generalizing this paper results to the finite domain case. The necessity part is considered in Section~\ref{sect:necessary}. The algebraic geometry point of view of $\CSP$s is further investigated in Section~\ref{sect:ppdef_zariski}, where the correspondence between pp-definability and elimination ideals is discussed.

\section{Background and Notation}\label{sect:preliminaries}

\subsection{Constraint Satisfaction and Polymorphisms}\label{sect:csp_intro}
This section provides the reader with the essential context needed on $\CSP$s. For a more comprehensive introduction and the missing proofs we recommend~\cite{Chen09,2017dfu7} and the references therein.
\begin{definition}\label{def:constraint_language}
Let $D$ denote a finite set (\emph{domain}).
 By a $k$-ary \textbf{\emph{relation}} $R$ on a domain $D$ we mean a subset of the $k$-th cartesian power $D^k$; $k$ is said to be the \emph{arity} of the relation. A \textbf{\emph{constraint language}} $\Gamma$ over $D$ is a set of relations over $D$. A constraint language is \textbf{\emph{finite}} if it contains finitely many relations, and is \emph{Boolean} if it is over the two-element domain $\{0,1\}$.
\end{definition}
\begin{definition}\label{def:constr_over_language}
  A \emph{\textbf{constraint}} over a constraint language $\Gamma$ is an expression $R(x_1,\ldots, x_k)$ where $R$ is a relation of arity $k$ contained in $\Gamma$, and the $x_i$ are variables. A constraint is satisfied by a mapping $\phi$ defined on the $x_i$ if $(\phi(x_1),\ldots, \phi(x_k))\in R$.
\end{definition}

It is sometimes convenient to work with the corresponding \emph{predicate} which is a mapping from $D^k$ to $\{true,false\}$ specifying which tuples are in $R$: we will use both formalisms, so $(a,b,c)\in R$ and $R(a,b,c)$ both mean that the triple $(a,b,c)\in D^3$ is from the relation $R$. Analogously, a constraint $R(x_1,\ldots, x_k)$ is a subset of the cartesian product of the domains of the variables $x_1,\ldots, x_k$ such that each member  is in $R$.
\begin{definition}\label{def:csp}
  The \emph{(nonuniform) \textsc{Constraint Satisfaction Problem} ($\CSP$)} associated with language $\Gamma$ over $D$ is the problem $\emph{\CSP}(\Gamma)$ in which:
   an instance is a triple $\Cc=(X,D,C)$ where $X=\{x_1,\ldots,x_n\}$ is a set of $n$ variables and $C$  is a set of constraints over $\Gamma$ with variables from $X$.
   The goal is to decide whether or not there exists a solution, i.e. a mapping $\phi: X\rightarrow D$ satisfying all of the constraints. We will use $Sol(\Cc)$ to denote the set of solutions of $\Cc$.
\end{definition}


\begin{definition}\label{def:polymorph}
An operation $f:D^m \rightarrow D$ is a \textbf{\emph{polymorphism}} of a relation $R\subseteq D^k$ if for any choice of $m$ tuples from $R$, it holds that the tuple obtained from these $m$ tuples by applying $f$ coordinate-wise is in $R$.
If this is the case we also say that $f$
\emph{preserves} $R$, or that $R$ is \emph{invariant} or \emph{closed} with respect to $f$.
A polymorphism of a constraint language $\Gamma$ is an operation
that is a polymorphism of \emph{every} $R\in \Gamma$.
We use $\Pol(\Gamma)$ to denote the set of all polymorphisms of $\Gamma$.  
\end{definition}
This algebraic object $\Pol(\Gamma)$ has the following two properties.
\begin{itemize}
  \item $\Pol(\Gamma)$ contains all \emph{projections}, i.e. operations of the form $\pi_i(a_1,\ldots,a_n)=a_i$.
  \item $\Pol(\Gamma)$ is closed under composition.
\end{itemize}
Sets of operations with these properties are called {\emph{clones}}; therefore we refer to $\Pol(\Gamma)$ as the \emph{clone of polymorphisms} of $\Gamma$. 
\begin{lemma}(see e.g.~\cite{2017dfu7})
For constraint languages $\Gamma$, $\Delta$, where $\Gamma$ is finite, if every polymorphism of $\Delta$ is also a polymorphism of $\Gamma$, then $\emph{\CSP}(\Gamma)$ is polynomial time reducible to $\emph{\CSP}(\Delta)$.
\end{lemma}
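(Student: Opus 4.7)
The plan is to derive the lemma from the classical Inv--Pol Galois connection (Geiger; Bodnarchuk--Kaluzhnin--Kotov--Romov), which says that for any finite-domain relational set $\Delta$, the relations preserved by every polymorphism of $\Delta$ are exactly those primitive positively (pp) definable from $\Delta$ (with equality). Given the hypothesis $\Pol(\Delta)\subseteq \Pol(\Gamma)$, each $R\in\Gamma$ is preserved by $\Pol(\Delta)$, hence by the Galois correspondence $R$ admits a pp-definition
\[
  R(x_1,\dots,x_k) \;\equiv\; \exists y_1\cdots\exists y_{m_R}\,\bigwedge_{i=1}^{t_R} S^R_i(\bar z^R_i),
\]
where every $S^R_i$ is either a relation from $\Delta$ or the equality relation on $D$. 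Since $\Gamma$ is finite, there are only finitely many such pp-definitions, so the numbers $m_R$, $t_R$ and the tuples $\bar z^R_i$ are bounded by an absolute constant depending only on $\Gamma$ and $\Delta$.

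Next I would build the reduction. Given an input instance $\Cc=(X,D,C)$ of $\CSP(\Gamma)$, I would construct an instance $\Cc'=(X',D,C')$ of $\CSP(\Delta)$ as follows: for each constraint $R(x_{i_1},\dots,x_{i_k})\in C$, take a \emph{fresh} copy of the auxiliary variables $y_1,\dots,y_{m_R}$ occurring in the pp-definition of $R$, add them to $X'$, and add the constraints $S^R_i(\bar z^R_i)$ (with $x_{i_1},\dots,x_{i_k}$ substituted for the free variables of the pp-formula and the fresh $y_j$'s for the bound ones) to $C'$. Any atom of the form $y=y'$ introduced by equality can be eliminated on the spot by identifying the two variables (equivalently, by propagating equality classes). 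Because the pp-definitions have bounded size, this replacement is carried out in linear time per constraint and thus in polynomial time overall.

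Finally I would check correctness: a mapping $\phi:X\to D$ satisfies a constraint $R(x_{i_1},\dots,x_{i_k})$ iff the pp-formula is witnessed by some assignment to $y_1,\dots,y_{m_R}$, which is precisely the condition that $\phi$ extends to an assignment on $X'$ satisfying all the $\Delta$-constraints introduced for that $R$. Taking the union over all constraints, $\Cc$ has a solution iff $\Cc'$ does, which establishes the polynomial-time reduction.

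The only genuinely nontrivial ingredient is the Galois theorem itself, which is standard and cited in~\cite{2017dfu7}; everything else is the routine mechanical substitution and bookkeeping sketched above. The main conceptual obstacle, if one wanted a self-contained proof, would be to show that the relations invariant under $\Pol(\Delta)$ are captured by pp-formulas; the usual argument proceeds by taking, for a given $k$-ary $R\in\Gamma$, the $\Pol(\Delta)$-closure of the $|R|$-tuple of rows of $R$ inside $D^{|R|}$ and reading off a pp-definition from the coordinate functions, but for the purposes of this lemma citing the Galois correspondence suffices.
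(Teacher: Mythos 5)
The paper offers no proof of this lemma at all---it is quoted with a citation to the standard literature---and your argument is precisely the standard one behind that citation: the $\Inv$--$\Pol$ Galois connection (Geiger, Bodnarchuk--Kaluzhnin--Kotov--Romov) gives a pp-definition over $\Delta$ (with equality) for each of the finitely many $R\in\Gamma$, and the constant-size gadget replacement of each constraint, with fresh existential variables and equality atoms removed by identifying variables, is the usual polynomial-time reduction. Your proposal is correct and takes essentially the same approach as the proof the paper points to.
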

\begin{definition}
  We say that an operation $f:D^k\rightarrow D$ is \emph{idempotent} if $f(d,\ldots,d)=d$ for all $d\in D$.
\end{definition}
\begin{lemma}\cite{Post41}\label{th:post}
Every idempotent clone on $D = \{0, 1\}$ that contains a non-projection contains one of the following operations: the binary \Max, the binary \Min, the ternary \Majority, or the ternary \Minority.
\end{lemma}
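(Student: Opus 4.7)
The plan is to take an idempotent clone $C$ on $\{0,1\}$ containing a non-projection and to exhibit at least one of $\Max,\Min,\Majority,\Minority$ in $C$. I would choose $f\in C$ to be a non-projection of minimum arity $n$ and argue that $n$ is forced to be $2$ or $3$: any identification of two variables in $f$ produces an operation in $C$ of smaller arity, which by minimality is a projection, and this rigidity combined with Boolean conservativity (every output of a Boolean operation is one of its inputs) can be used to rule out $n\geq 4$. The remaining two cases are then handled directly.

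For $n=2$: idempotence fixes $f(0,0)=0$ and $f(1,1)=1$, so $f$ is determined by the pair $(f(0,1),f(1,0))$. Of the four possibilities, two are the projections $\pi_1,\pi_2$ and the other two are $\Min$ (both values $0$) and $\Max$ (both values $1$). Since $f$ is a non-projection, $f\in\{\Min,\Max\}$.

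For $n=3$: by minimality, each of the binary identifications $f(x,x,y)$, $f(x,y,x)$, $f(y,x,x)$ is an operation of arity $<n$ in $C$, hence a projection onto either the duplicated variable $x$ or the lone variable $y$. This yields $2^3=8$ configurations. The two ``homogeneous'' configurations (all three identifications preserving $x$, or all three preserving $y$) completely pin down the truth table of $f$ as $\Majority$ and $\Minority$ respectively. For the six ``mixed'' configurations I would represent $f$ as a multilinear polynomial over $\mathbb{F}_2$, so that the identification conditions become linear equations in its coefficients. A finite check then shows each remaining candidate $f$ either already coincides with a projection on $\{0,1\}^3$ (contradicting non-projectivity) or admits a composition such as $f(x,y,f(x,y,z))$ that simplifies to $xy+xz+yz=\Majority(x,y,z)$ or to $x+y+z=\Minority(x,y,z)$, thereby placing that operation in $C$.

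The main obstacle is the six mixed configurations in the ternary step: no single uniform identity handles all of them, so one must verify case by case, most conveniently via $\mathbb{F}_2$ coefficient arithmetic, that a suitable composition either collapses $f$ to a projection or produces one of $\Majority,\Minority$. The reduction from $n\geq 4$ down to $n\in\{2,3\}$ follows the same style of identification-plus-consistency arguments and is comparatively mechanical once the ternary case is understood.
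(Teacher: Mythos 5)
The paper does not prove this lemma at all; it is imported from Post \cite{Post41} (in the modern polymorphism formulation, as in Chen's survey), so your argument can only be judged on its own terms. Its skeleton is the standard one and is sound: take a non-projection $f$ of minimum arity $n$ in the clone, note that every identification minor is an idempotent operation of smaller arity and hence a projection, and exploit the Boolean pigeonhole fact that every tuple in $\{0,1\}^n$ with $n\geq 3$ has two equal coordinates, so $f$ is completely determined by its minors. Your $n=2$ case is correct, and the $n=3$ case analysis is correct in substance: of the $8$ assignments of the three minors to projections, the two homogeneous ones force $f=\Majority$ and $f=\Minority$; three of the mixed ones make $f$ itself equal to $\pi_1$, $\pi_2$ or $\pi_3$ (contradicting non-projectivity); and the remaining three are genuine non-projections (up to permuting coordinates, the operation equal to $\Max(y,z)$ when $x=0$ and $\Min(y,z)$ when $x=1$, i.e. $y\oplus z\oplus xy\oplus xz\oplus yz$ over $\mathbb{F}_2$), from which a composition does recover majority. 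One small correction there: your sample composition $f(x,y,f(x,y,z))$ reproduces $f$ itself in that case; the composition $f(f(x,y,z),y,z)=xy\oplus xz\oplus yz=\Majority(x,y,z)$ is one that works, so the ``finite check'' must actually search for the right substitution pattern, but suitable ones exist.

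The one genuine gap is the dismissal of the reduction from $n\geq 4$ as ``comparatively mechanical.'' What is needed there is exactly \'{S}wierczkowski's lemma (specialized to $\{0,1\}$): if every identification minor of an at-least-quaternary $f$ is a projection, then all of them are the \emph{same} projection $\pi_c$, after which the pigeonhole observation gives $f=\pi_c$, a contradiction. The consistency step does not follow by the same local reasoning as in the ternary case: a priori the minor obtained by identifying $\{i,j\}$ could project onto coordinate $i$ while the minor for a disjoint pair $\{k,l\}$ projects onto coordinate $j$, and on tuples satisfying both equalities these two projections are indistinguishable, so the naive ``pick a tuple separating the two coordinates'' argument does not immediately force agreement; moreover, for $n=4$ the disjoint pairs form a perfect matching on the six pairs, so one cannot chain through disjoint pairs alone and must also compare overlapping pairs. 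This is a short but genuinely delicate finite argument (or a citation to \'{S}wierczkowski's lemma); as written, your proposal asserts it rather than proves it, and that is the piece you would need to supply to make the proof complete.
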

In this paper we focus on Boolean $\CSP$s.
%
In 1978 Schaefer~\cite{Schaefer78} obtained an interesting classification of the polynomial-time decidable cases of $\CSP$s when $D=\{0,1\}$.
Schaefer's dichotomy theorem was originally formulated in terms of properties of relations; here we give a modern presentation of the theorem that uses polymorphisms (see Jeavons~\cite{Jeavons98} and \cite{Chen09,2017dfu7}).
\begin{theorem}[Schaefer's Dichotomy Theorem~\cite{Schaefer78}]\label{th:schaefer}
  Let $\Gamma$ be a finite Boolean constraint language. Then
    the problem $\CSP(\Gamma)$ is polynomial-time tractable if its polymorphism clone contains a constant unary operation or
    it is an idempotent clone that contains a non-projection. Otherwise the problem is NP-complete.
\end{theorem}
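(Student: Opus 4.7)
The plan is to prove the two directions separately. For the tractability direction I would exhibit, for each polymorphism permitted by the hypothesis, a polynomial-time algorithm; for the NP-hardness direction I would combine Post's classification of Boolean clones with the $\Pol$-$\Inv$ Galois connection to reduce a known NP-complete $\CSP$ to $\CSP(\Gamma)$.

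For tractability, if $\Pol(\Gamma)$ contains the constant unary operation $c\in\{0,1\}$ then the constant assignment $\phi\equiv c$ satisfies every constraint by the definition of polymorphism. Otherwise the hypothesis forces $\Pol(\Gamma)$ to be idempotent and to contain a non-projection, so by Lemma~\ref{th:post} it contains at least one of $\Min,\Max,\Majority,\Minority$. Each of these polymorphisms imposes a well-known structural normal form on the relations of $\Gamma$: $\Min$ (resp.\ $\Max$) forces every relation to be a conjunction of Horn (resp.\ dual-Horn) clauses, so $\CSP(\Gamma)$ reduces to Horn-SAT (resp.\ dual Horn-SAT) and is solved by unit propagation; $\Majority$ forces every relation to be bijunctive, so $\CSP(\Gamma)$ reduces to 2-SAT, solvable via the strongly-connected components of the implication graph; $\Minority$ forces every relation to be an affine subspace of $\mathbb{F}_2^k$, so $\CSP(\Gamma)$ reduces to a linear system over $\mathbb{F}_2$ and is solved by Gaussian elimination. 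Each normal-form claim is proved by contradiction: a putative tuple outside the relation is produced as the coordinate-wise image, under the polymorphism, of tuples that are required to lie inside the relation.

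For NP-hardness, suppose $\Pol(\Gamma)$ contains none of the six listed operations. The plan is to invoke the Galois connection, which tells us that the relations pp-definable from $\Gamma$ are exactly those in $\Inv(\Pol(\Gamma))$, and then to exhibit a fixed NP-complete template (such as the positive 3-SAT relation, the 1-in-3-SAT relation, or the NAE-3-SAT relation) inside that invariant set. A pp-definition of such a template using relations of $\Gamma$ translates, in the standard way via existential projection and conjunction, into a polynomial-time reduction from an NP-complete $\CSP$ to $\CSP(\Gamma)$. The identification of the template is carried out by a case analysis driven by Post's lattice: the sub-clone of idempotent operations in $\Pol(\Gamma)$ contains only projections (else Lemma~\ref{th:post} would re-introduce one of $\Min,\Max,\Majority,\Minority$), and the maximal Boolean clones compatible with the remaining assumptions are enumerated by Post's theorem; for each such clone an explicit NP-complete invariant relation is exhibited.

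The main obstacle is the hardness direction, specifically the explicit exhibition of an NP-complete invariant relation for each remaining clone in Post's lattice; the tractability direction, by contrast, is a menu of classical algorithmic observations backed by a uniform normal-form argument from the corresponding polymorphism. A careful bookkeeping of the Post lattice and of the pp-definability closure operators is what makes the hardness direction technically heavier than the tractability direction.
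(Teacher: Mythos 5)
The paper does not prove this theorem at all: it is quoted as a known classical result (Schaefer~1978, in the polymorphism formulation of Jeavons and Chen), so there is no internal proof to compare your argument against. Judged on its own, your sketch follows the standard modern algebraic proof and is sound at the outline level. The tractability half is exactly the classical menu: constant polymorphism, then Lemma~\ref{th:post} plus the Horn / dual-Horn / bijunctive / affine normal forms with their respective algorithms, each normal form certified by the coordinate-wise polymorphism argument you describe. One small caveat: a relation can be \emph{empty} and still vacuously closed under the constant operation $c$, so the constant assignment argument needs the trivial preprocessing step of rejecting any instance that uses an empty relation (or the blanket assumption that all relations in $\Gamma$ are nonempty). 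The hardness half is where all the real work sits, and you correctly flag it as such; your plan is the standard and correct one. Under your hypothesis (none of $\Majority,\Minority,\Min,\Max$ and no constant in $\Pol(\Gamma)$), the Post-type classification recalled in Section~\ref{sect:necessary} of the paper forces $\Pol(\Gamma)$ to consist of essentially unary operations only, hence to be either the projection clone or the clone generated by $\neg$; the $\Pol$--$\Inv$ Galois connection then makes, respectively, the 1-in-3-SAT relation or the NAE-3-SAT relation pp-definable from $\Gamma$, and the usual gadget replacement (with identification of variables to handle equalities) gives the polynomial-time reduction. What remains to be written out is only this finite case analysis, not a new idea.

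One point worth noting: your hardness hypothesis (``$\Pol(\Gamma)$ contains none of the six listed operations'') is the right one, and it is what Schaefer's theorem actually requires; it is not the literal negation of the tractability condition as phrased in Theorem~\ref{th:schaefer}, which asks the \emph{whole} clone to be idempotent. Read literally, that phrasing misclassifies e.g.\ $\Gamma=\{\{(0,1),(1,0)\}\}$: its polymorphism clone contains $\neg$ (so it is not idempotent) and no constants, yet $\Majority$ is a polymorphism and $\CSP(\Gamma)$ is tractable. Your reading --- look at the idempotent subclone of $\Pol(\Gamma)$ and apply Lemma~\ref{th:post} to it --- is exactly the right repair, and with it your two directions are genuinely complementary, so the overall strategy proves the correct statement.
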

%

%

\subsection{Ideals, Varieties and Constraints}\label{sect:background}
Let $\Field$ denote an arbitrary field (for the applications of this paper $\Field=\Real$). Let $\Field[x_1, \ldots, x_n]$ be the ring of polynomials over a field $\Field$ and indeterminates $x_1,\ldots, x_n$. Let $\Field[x_1, \ldots, x_n]_d$ denote the subspace of polynomials of degree at most $d$.
\begin{definition}\label{def:ideal}
The ideal (of $\Field[x_1,\ldots,x_n]$) generated by a finite set of polynomials $\{f_1,\ldots, f_m\}$ in $\Field[x_1,\ldots,x_n]$ is defined as
$$\Ideal{ f_1,\ldots,f_m}\mydef \left\{\sum_{i=1}^m t_i f_i\ \mid \ t_1,\ldots,t_m\in \Field[x_1,\ldots,x_n]\right\}.$$
The set of polynomials that vanish in a given set $S\subset \Field^n$ is called the \textbf{\emph{vanishing ideal}} of $S$ and denoted:
$\Ideal{S}\mydef \{f\in \Field[x_1,\ldots,x_n]\mid f(a_1,\ldots,a_n)=0 \  \forall (a_1,\ldots,a_n)\in S\}$.
\end{definition}
\begin{definition}
The \emph{radical} of an ideal $\I$ is an ideal such that an element $x$ is in the radical if and only if some power of $x$ is in $\I$. The radical of an ideal $\I$ is denoted by $\sqrt{\I}$.
An ideal $\I$ is \textbf{\emph{radical}} if $f^m \in\I$ for some integer $m\geq 1$ implies that $f\in \I$, namely if $\I=\sqrt{\I}$. 
\end{definition}
Another common way to denote $\Ideal{ f_1,\ldots,f_m}$ is by $\langle f_1,\ldots,f_m \rangle$ and we will use both notations interchangeably.
\begin{definition}
%
 Let $\{f_1,\ldots, f_m\}$ be a finite set of polynomials in $\Field[x_1,\ldots,x_n]$. We call
$$\Variety{ f_1,\ldots,f_m}\mydef \{(a_1,\ldots,a_n)\in \Field^n\mid f_i(a_1,\ldots,a_n)=0, \  \text{with }1\leq i\leq m\}$$ 
the \textbf{\emph{affine variety}} defined by $f_1,\ldots, f_m$.
\end{definition}
\begin{definition}
  Let $\I\subseteq \Field[x_1,\ldots,x_n]$ be an ideal. We will denote by $\Variety{\I}$ the set $\Variety{\I}=\{(a_1,\ldots,a_n)\in \Field^n\mid f(a_1,\ldots,a_n)=0 \  \forall f\in \I\}$.
\end{definition}
\begin{theorem}[\cite{Cox:2015}, p. 196]\label{th:ideal_intersection}
  If $I$ and $J$ are ideals in $\Field[x_1,\ldots,x_n]$, then $\Variety{I\cap J}= \Variety{I}\cup \Variety{J}$.
\end{theorem}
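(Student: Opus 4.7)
The plan is to prove the set equality by establishing the two inclusions separately, since each direction requires a different idea.

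For the easy inclusion $\Variety{I} \cup \Variety{J} \subseteq \Variety{I \cap J}$, I would argue monotonically using set containment. Since $I \cap J \subseteq I$, any point $a \in \Variety{I}$ vanishes on every polynomial of $I$ and therefore on every polynomial of the smaller set $I \cap J$, so $a \in \Variety{I \cap J}$. The symmetric argument handles $a \in \Variety{J}$, and taking the union of the two containments yields the desired inclusion. This step is essentially a bookkeeping observation about how the operator $\Variety{\cdot}$ reverses containment.

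For the reverse inclusion $\Variety{I \cap J} \subseteq \Variety{I} \cup \Variety{J}$, the idea is to argue by contrapositive and exploit the closure of ideals under multiplication. Suppose $a \in \Field^n$ with $a \notin \Variety{I} \cup \Variety{J}$. Then there exist witnesses $f \in I$ with $f(a) \neq 0$ and $g \in J$ with $g(a) \neq 0$. The crucial step is to form the product $h = f \cdot g$. Because $I$ and $J$ are ideals, $h = f \cdot g$ lies in $I$ (as a multiple of $f \in I$) and simultaneously in $J$ (as a multiple of $g \in J$), so $h \in I \cap J$. Evaluating at $a$ gives $h(a) = f(a) g(a) \neq 0$, since $\Field$ is a field and hence an integral domain. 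Therefore $a \notin \Variety{I \cap J}$, which is exactly the contrapositive of the desired inclusion.

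Combining the two inclusions yields the theorem. The only step that has any content is the product argument in the reverse inclusion, and even this is not really an obstacle: it relies on the two defining properties used together, namely that ideals absorb multiplication by arbitrary ring elements (so $f g$ lies in both $I$ and $J$) and that $\Field$ has no zero divisors (so nonvanishing is preserved under products). No Nullstellensatz or radical machinery is required, and the statement holds over any field without any finiteness or radicality hypothesis on $I$ and $J$.
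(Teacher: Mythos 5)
Your proof is correct. The paper does not prove this statement itself but cites the textbook \cite{Cox:2015}, and your argument --- inclusion-reversal of $\Variety{\cdot}$ for the easy direction, and the product trick $fg\in I\cap J$ with $f(a)g(a)\neq 0$ (valid since $\Field$ has no zero divisors) for the reverse direction --- is essentially the same standard argument given there, so there is nothing to add.
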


\subsubsection{The Ideal-CSP correspondence}\label{sect:idealCSP} Constraints are in essence varieties, see e.g.~\cite{vandongenPhd,JeffersonJGD13}. 
Indeed, let $\Cc=(X,D,C)$ be an instance of the $\CSP(\Gamma)$ (see Definition~\ref{def:csp}). 
Let $Sol(\Cc)$ be the (possibly empty) set of all feasible solutions of $\Cc$.
In the following, we map $Sol(\Cc)$ to an ideal $\I_\Cc\subseteq \Field[X]$ such that $Sol(\Cc)=\Variety{\I_\Cc}$.

Let $Y=(x_{i_1},\ldots,x_{i_k})$ be a $k$-tuple of variables from $X$
and let $R(Y)$ be a non empty constraint from $C$.
In the following, we map $R(Y)$ to a generating system of an ideal such that the projection of the variety of this ideal onto $Y$ is equal to $R(Y)$ (see~\cite{vandongenPhd} for more details).

Every $v=(v_1,\ldots,v_k)\in R(Y)$ corresponds to some point $v\in \Field^k$. It is easy to check~\cite{Cox:2015} that $\Ideal{\{v\}}= \GIdeal{x_{i_1}-v_{1},\ldots,x_{i_k}-v_{k}}$, where $\GIdeal{x_{i_1}-v_{1},\ldots,x_{i_k}-v_{k}}$ is a radical ideal.
By Theorem~\ref{th:ideal_intersection}, we have
\begin{align}\label{eq:constr=var}
  R(Y)&=\bigcup_{v\in R(Y)} \Variety{\Ideal{\{v\}}}=\Variety{\I_{R(Y)}}, \\
  \I_{R(Y)} &= \bigcap_{v\in R(Y)} \Ideal{\{v\}},\nonumber
\end{align}
where $\I_{R(Y)}\subseteq \Field[Y]$ is zero-dimensional and radical ideal since it is the intersection of radical ideals (see~\cite[Proposition~16, p.197]{Cox:2015}). Equation~\eqref{eq:constr=var} states that constraint $R(Y)$ is a variety of $\Field^k$. It is easy to find a generating system for $\I_{R(Y)}$:
 \begin{align*}
   \I_{R(Y)}&=\GIdeal{\prod_{v\in R}(1-\prod_{j=1}^{k}\delta_{v_j}(x_{i_j})),\prod_{j\in D}(x_{i_1}-j),\ldots,\prod_{j\in D}(x_{i_k}-j)},
 \end{align*}
where $\delta_{v_j}(x_{i_j})$ are indicator polynomials, i.e. equal to one when $x_{i_j}=v_j$ and zero when $x_{i_j}\in D\setminus\{v_j\}$; polynomials $\prod_{j\in D}(x_{i_k}-j)$ force variables to take values in $D$ and will be denoted as {\emph{domain polynomials}}.

The smallest ideal (with respect to inclusion) of $\Field[X]$ containing $\I_{R(Y)}\subseteq \Field[\x]$ will be denoted $\I_{R(Y)}^{\Field[X]}$ and it is called the $\Field[X]$-module of $\I$. The set $Sol(\Cc)\subset \Field^n$ of solutions of $\Cc=(X,D,C)$ is the intersection of the varieties of the constraints:
\begin{align}
  Sol(\Cc) & = \bigcap_{R(Y)\in C}\Variety{\I_{R(Y)}^{\Field[X]}}=\Variety{\I_C},\nonumber\\ 
  \I_\Cc&=\sum_{R(Y)\in C}\I_{R(Y)}^{\Field[X]}.\label{eq:IC}
\end{align}

The following properties follow from Hilbert's Nullstellensatz. 
\begin{theorem}\label{th:nullstz}
Let $\Cc$ be an instance of the $\CSP(\Gamma)$ and $\I_\Cc$ defined as in~\eqref{eq:IC}. Then
  \begin{align}
    &\Variety{\I_\Cc}=\emptyset \Leftrightarrow 1\in \Ideal{\I_\Cc} \Leftrightarrow \I_\Cc=\Field[X],  &\text{(Weak Nullstellensatz)}\label{eq:weak_nstz}\\
    &\Ideal{\Variety{\I_\Cc}}=\sqrt{\I_\Cc}, &\text{(Strong Nullstellensatz)}\label{eq:strong_nstz}\\
    &\sqrt{\I_\Cc}=\I_\Cc. &\text{(Radical Ideal)}\label{eq:ICradical}
  \end{align}
\end{theorem}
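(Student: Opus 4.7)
The plan is to derive all three statements from the observation that $\I_\Cc$ is \emph{zero-dimensional} with variety contained in $D^n\subseteq\Real^n$, and then invoke the classical Hilbert's Nullstellensatz. Assuming without loss of generality that every variable in $X$ appears in at least one constraint (otherwise prepend a trivial unary constraint $D(x_i)$, which does not alter $Sol(\Cc)$), the generating system listed just above~\eqref{eq:IC} places the domain polynomial $p_i(x_i)=\prod_{j\in D}(x_i-j)$ inside $\I_\Cc$ for every $i$. This immediately gives $\Variety{\I_\Cc}\subseteq D^n$ over any extension field of $\Real$, so the real and complex varieties of $\I_\Cc$ coincide and are finite; in particular $\I_\Cc$ is zero-dimensional.

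For the radicality claim~\eqref{eq:ICradical} I would invoke Seidenberg's lemma: any ideal of $\Real[x_1,\ldots,x_n]$ containing a squarefree univariate polynomial in every indeterminate is radical. Each $p_i$ has $|D|$ distinct real roots and is therefore squarefree over the perfect field $\Real$, so the lemma applies and yields $\sqrt{\I_\Cc}=\I_\Cc$. A more constructive alternative is to write each $\I_{R(Y)}$ as the intersection~\eqref{eq:constr=var} of the maximal (hence radical) ideals $\Ideal{\{v\}}$, so that each $\I_{R(Y)}$ is radical, and then use zero-dimensionality to conclude that the sum $\I_\Cc$ remains radical.

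Once radicality and the real/complex coincidence of varieties are in hand, the two Nullstellensatz statements reduce to the classical ones. For~\eqref{eq:weak_nstz}, the chain $1\in\I_\Cc\Rightarrow\I_\Cc=\Real[X]\Rightarrow\Variety{\I_\Cc}=\emptyset$ is trivial, and conversely an empty real variety forces an empty complex variety, so Hilbert's Weak Nullstellensatz over $\Complex$ gives $1\in\I_\Cc\cdot\Complex[X]$; separating coefficients into real and imaginary parts then yields $1\in\I_\Cc$. For~\eqref{eq:strong_nstz}, $\sqrt{\I_\Cc}\subseteq\Ideal{\Variety{\I_\Cc}}$ is immediate; conversely any $f$ vanishing on $\Variety{\I_\Cc}$ vanishes on the complex variety as well, so classical Hilbert yields $f^m\in\I_\Cc\cdot\Complex[X]$ for some $m\geq 1$, and the real/imaginary split gives $f^m\in\I_\Cc$, i.e.\ $f\in\sqrt{\I_\Cc}$.

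The main technical hurdle is the radicality step, since sums of radical ideals need not be radical in general; it is precisely the zero-dimensionality supplied by the squarefree domain polynomials $p_i$ that rescues the argument, after which the two Nullstellensatz statements become textbook transfers from $\Complex$ down to $\Real$.
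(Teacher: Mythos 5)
Your proposal is correct and follows essentially the same route as the paper: the paper's own (very brief) justification is precisely that the domain polynomials force every solution, over $\Real$ or any field extension, into $D^n\subseteq\Field^n$, so the classical Hilbert Nullstellensatz applies without passing to the algebraic closure or to radicals, and your zero-dimensionality observation, the $\Complex$-to-$\Real$ coefficient descent, and Seidenberg's lemma simply supply the details that the paper leaves implicit. The only caveat is that your ``more constructive alternative'' for radicality is not valid as stated, since a sum of radical ideals need not be radical even when the sum is zero-dimensional (e.g. $\langle x\rangle+\langle x-y^2\rangle=\langle x,y^2\rangle$ in $\Real[x,y]$), but this does not affect correctness because your main argument via Seidenberg's lemma carries the claim.
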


Theorem~\ref{th:nullstz} follows from a simple application of the celebrated and basic result in algebraic geometry known as Hilbert's Nullstellensatz. In the general version of Nullstellensatz it is necessary to work in an algebraically closed field and take a radical of the ideal of polynomials. In our special case it is not needed due to the presence of domain polynomials. Indeed, the latter implies that we know a priori that the solutions must be in $\Field$ (note that we are assuming $D\subseteq \Field$).

\subsection{\GB bases}\label{sect:GBbasics}
%
We can reconstruct the monomial $x^\alpha=x_1^{\alpha_1}\cdots x_n^{\alpha_n}$ from the $n$-tuple of exponents $\alpha =(\alpha_1,\ldots,\alpha_n)\in \Zz^n_{\geq0}$. This establishes a one-to-one correspondence between the monomials in $\Field[x_1,\ldots,x_n]$ and $\Zz^n_{\geq0}$. Any ordering $>$ we establish on the space $\Zz^n_{\geq0}$
will give us an ordering on monomials: if $\alpha > \beta$ according to this ordering, we will also say that $x^\alpha > x^\beta$.
For more details we refer to \cite[Definition 1, p.55]{Cox:2015}.

In this paper we will be mainly interested in two monomial orderings, namely the \emph{lexicographic order} (\textbf{lex}) and the \emph{graded lexicographic order} (\textbf{grlex}): 
\begin{itemize}
    \item In the lex ordering, $x^\alpha >_{\mathsf{lex}} x^\beta$ if the left-most nonzero entry of $\alpha-\beta$ is positive. Notice that a particular order of the variables is assumed.
    \item In the grlex ordering, $x^\alpha >_{\mathsf{grlex}} x^\beta$ if $|\alpha|>|\beta|$ or if $|\alpha|=|\beta|$ and $x^\alpha >_{\mathsf{lex}} x^\beta$, where $|\gamma|=\sum_i \gamma_i$.
\end{itemize}

In the remainder of this section we suppose a fixed monomial ordering $>$ on $\Field[x_1,\ldots,x_n]$, which will not be defined explicitly.

\begin{definition}
  For any $\alpha=(\alpha_1,\cdots,\alpha_n)\in \Zz^n_{\geq0}$ let $x^\alpha\mydef \sum_{i=1}^{n}x_i^{\alpha_i}$. Let $f= \sum_{\alpha} a_{\alpha}x^\alpha$ be a nonzero polynomial in $\Field[x_1,\ldots,x_n]$ and let $>$ be a monomial order.
  \begin{enumerate}
    \item The \textbf{\emph{multideg}} of $f$ is $\multideg(f)\mydef \max(\alpha\in \Zz^n_{\geq0}:a_\alpha\not = 0)$.
    \item The \textbf{\emph{leading coefficient}} of $f$ is $\LC(f)\mydef a_{\multideg(f)}\in \Field$.
    \item The \textbf{\emph{leading monomial}} of $f$ is $\LM(f)\mydef x^{\multideg(f)} \ \text{(with coefficient 1)}$.
    \item The \textbf{\emph{leading term}} of $f$ is $\LT(f)\mydef \LC(f)\cdot \LM(f)$.
  \end{enumerate}
\end{definition}
%

The concept of \emph{reduction}, also called \emph{multivariate division} or \emph{normal form computation}, is central to \GB basis theory. It is a multivariate generalization of the Euclidean division of univariate polynomials.

\begin{definition}\label{def:reduction}
Fix a monomial order and let $G=\{g_1,\ldots,g_t\}\subset \Field[x_1,\ldots,x_n]$. Given $f\in \Field[x_1,\ldots,x_n]$, we say that \emph{\textbf{$f$ reduces to $r$ modulo $G$}}, written
$f\rightarrow_G r$,
if $f$ can be written in the form
$f=A_1g_1+\dots+A_t g_t+r$ for some $A_1,\ldots,A_t,r\in \Field[x_1,\ldots,x_n]$,
such that:
\begin{enumerate}
  \item No term of $r$ is divisible by any of $\LT(g_1),\ldots,\LT(g_t)$.
  \item Whenever $A_i g_i\not=0$, we have $\multideg(f)\geq \multideg(A_ig_i)$.
\end{enumerate}
The polynomial remainder $r$ is called a \emph{\textbf{normal form of $f$ by $G$}} and will be denoted by $f|_G$.
\end{definition}

A normal form of $f$ by $G$, i.e. $f|_G$, can be obtained by repeatedly performing the following until it cannot be further applied: choose any $g\in G$ such that $\LT(g)$ divides some term $t$ of $f$ and replace $f$ with $f-\frac{t}{\LT(g)}g$. Note that the order in which we choose the polynomials $g$ in the division process is not specified.

In general a normal form $f|_G$ is not uniquely defined.
Even when $f$ belongs to the ideal generated by $G$, i.e. $f\in \Ideal{G}$, it is not always true that $f|_G=0$.
\begin{example}
  Let $f=xy^2-y^3$ and $G=\{g_1,g_2\}$, where $g_1=xy-1$ and $g_2=y^2-1$. Consider the graded lexicographic order (with $x>y$) and note that
    $f = y\cdot g_1 - y\cdot g_2 + 0$ and
    $f = 0\cdot g_1 + (x-y)\cdot g_2 + x-y$.
\end{example}
This non-uniqueness is the starting point of \GB basis theory.
\begin{definition}
Fix a monomial order on the polynomial ring $\Field[x_1,\ldots,x_n]$. A finite subset $G = \{g_1,\ldots, g_t\}$ of an ideal $\I \subseteq \Field[x_1,\ldots,x_n]$ different from $\{0\}$ is said to be a \emph{\textbf{\GB basis}} (or \emph{\textbf{standard basis}}) if
$\langle \LT(g_1),\ldots, \LT(g_t)\rangle = \langle \LT(\I)\rangle$, where we denote by $\langle \LT(\I)\rangle$ the ideal generated by the elements of the set $\LT(\I)$ of leading terms of nonzero elements of $\I$.
\end{definition}
\begin{definition}\label{def:redGB}
A \textbf{\emph{reduced \GB basis}} for a polynomial ideal $\I$ is a \GB basis $G$ for $\I$ such that:
\begin{enumerate}
  \item $\LC(g)= 1$ for all $g \in G$.
  \item For all $g \in G$, no monomial of $g$ lies in $\GIdeal{\LT(G\setminus \{g\})}$.
\end{enumerate}
\end{definition}
It is known (see~\cite[Theorem~5, p. 93]{Cox:2015}) that for a given monomial ordering, a polynomial ideal $\I\not=\{0\}$ has a reduced \GB basis (see Definition~\ref{def:redGB}), and the reduced \GB basis is unique.
\begin{proposition}[\cite{Cox:2015}, Proposition~1, p. 83]\label{th:gbprop}
Let $\I\subset \Field[x_1,\dots,x_n]$ be an ideal and let $G=\{g_1,\ldots,g_t\}$ be a \GB basis for $\I$. Then given $f\in \Field[x_1,\dots,x_n]$, $f$ can be written in the form
$f=A_1g_1+\dots+A_t g_t+r$ for some $A_1,\ldots,A_t,r\in \Field[x_1,\ldots,x_n]$,
such that:
\begin{enumerate}
  \item No term of $r$ is divisible by any of $\LT(g_1),\ldots,\LT(g_t)$.
  \item Whenever $A_i g_i\not=0$, we have $\multideg(f)\geq \multideg(A_ig_i)$.
  \item There is a unique $r\in \Field[x_1,\dots,x_n]$.
\end{enumerate}
In particular, $r$ is the remainder on division of $f$ by $G$ no matter how the elements of $G$ are listed when using the division algorithm.
\end{proposition}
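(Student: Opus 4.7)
The plan is to prove existence of a representation $f = A_1 g_1 + \cdots + A_t g_t + r$ satisfying conditions (1) and (2), and then to prove uniqueness of $r$. Existence does not use the Gr\"{o}bner basis hypothesis at all; uniqueness is precisely where that hypothesis enters.

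For existence, I would simply invoke the multivariate division algorithm underlying Definition~\ref{def:reduction}: at each step, pick any term of the current dividend that is divisible by some $\LT(g_i)$ and subtract the appropriate monomial multiple of $g_i$; any term of the current dividend that is not divisible by any $\LT(g_i)$ gets moved into the accumulated remainder. Termination is guaranteed because the monomial ordering $>$ is a well-ordering, so the sequence of multidegrees of successive dividends strictly decreases and must stabilize. Condition (1) is automatic from the construction of $r$, since only terms that no $\LT(g_i)$ divides are placed there. Condition (2) follows because every contribution to $A_i$ is a monomial of multidegree at most $\multideg(f) - \multideg(g_i)$, so $\multideg(A_i g_i) \leq \multideg(f)$ whenever $A_i g_i \neq 0$. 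All of this is standard bookkeeping and does not require $G$ to be a Gr\"{o}bner basis.

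For uniqueness, suppose we have two representations $f = \sum_i A_i g_i + r = \sum_i A'_i g_i + r'$, with both $r, r'$ satisfying (1). Subtracting yields $r - r' = \sum_i (A'_i - A_i)\, g_i \in \I$. I would argue by contradiction: assume $r - r' \neq 0$, so $\LT(r - r') \in \GIdeal{\LT(\I)}$. Here is where the Gr\"{o}bner basis property is used: by definition, $\GIdeal{\LT(\I)} = \GIdeal{\LT(g_1), \ldots, \LT(g_t)}$, which is a monomial ideal, and the standard lemma on monomial ideals (\cite[\S 2.4]{Cox:2015}) forces $\LT(r - r')$ to be divisible by some $\LT(g_i)$. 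On the other hand, every monomial appearing in $r - r'$ is a monomial of $r$ or of $r'$, and by condition (1) no such monomial is divisible by any $\LT(g_i)$. In particular the leading monomial cannot be, giving the desired contradiction, so $r = r'$. The main (and really only) non-routine step in the whole argument is this translation from ``lies in $\I$'' to ``leading term divisible by some $\LT(g_i)$'' --- that is the exact content of the Gr\"{o}bner basis hypothesis, and without it the monomial-ideal lemma would not apply to the enlarged ideal $\GIdeal{\LT(\I)}$.
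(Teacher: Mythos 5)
Your proof is correct and follows essentially the same route as the argument the paper points to (it cites \cite{Cox:2015} and gives no proof of its own): existence together with conditions (1)--(2) comes from the multivariate division algorithm, and uniqueness of $r$ comes from observing that $r-r'\in\I$ would force $\LT(r-r')\in\GIdeal{\LT(g_1),\ldots,\LT(g_t)}$ by the \GB basis property and the monomial-ideal divisibility lemma, contradicting condition (1); the final ``independent of the listing of $G$'' claim then follows at once. The only minor imprecision is in the termination argument: with the ``reduce any divisible term'' variant the multidegree of successive dividends is merely non-increasing rather than strictly decreasing, so one should instead note that the cancelled term strictly drops while all newly created terms are smaller (or simply always reduce the leading divisible term, as in the standard division algorithm).
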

\begin{corollary}[\cite{Cox:2015}, Corollary~2, p.84]\label{th:imp}
  Let $G=\{g_1,\ldots,g_t\}$ be a \GB basis for $\I\subseteq \Field[x_1,\dots,x_n]$ and let $f\in \Field[x_1,\dots,x_n]$. Then $f\in \I$ if and only if the remainder on division of $f$ by $G$ is zero.
\end{corollary}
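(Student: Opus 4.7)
The plan is to prove both directions separately, with all the work concentrated on ($\Rightarrow$). For ($\Leftarrow$), if the division algorithm returns remainder $0$ then $f = A_1 g_1 + \cdots + A_t g_t$ expresses $f$ as a polynomial combination of the $g_i$, so $f \in \GIdeal{g_1,\ldots,g_t} = \I$; this direction does not use the \GB basis hypothesis at all.

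For ($\Rightarrow$), suppose $f \in \I$. First I would run the division algorithm (as in Proposition~\ref{th:gbprop}) to produce a decomposition $f = A_1 g_1 + \cdots + A_t g_t + r$ in which no term of $r$ is divisible by any of $\LT(g_1),\ldots,\LT(g_t)$. The key observation is that $r = f - \sum_i A_i g_i$ lies in $\I$, since $f \in \I$ by hypothesis and each $g_i \in \I$. It therefore suffices to argue that any $r \in \I$ none of whose terms are divisible by $\LT(g_1),\ldots,\LT(g_t)$ must in fact be zero.

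Assume for contradiction that $r \neq 0$. Then $\LT(r)$ is a nonzero monomial belonging to $\LT(\I)$, so $\LT(r) \in \GIdeal{\LT(\I)}$. Since $G$ is a \GB basis of $\I$ we have $\GIdeal{\LT(\I)} = \GIdeal{\LT(g_1),\ldots,\LT(g_t)}$, which is a monomial ideal. The standard fact that a monomial lying in a monomial ideal must be divisible by at least one of the generating monomials then forces $\LT(g_i) \mid \LT(r)$ for some $i$, contradicting the defining property of the remainder. Hence $r = 0$, as desired. The only step beyond routine bookkeeping is this last monomial-ideal divisibility fact, which is the one place where the \GB basis hypothesis really bites; I would simply invoke it from the standard treatment in \cite{Cox:2015} rather than reprove it.
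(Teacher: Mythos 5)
Your proof is correct and is essentially the standard argument that the paper invokes by citing \cite{Cox:2015}: the easy direction is immediate since each $g_i\in\I$, and the hard direction rests on the fact that $r=f-\sum_i A_ig_i\in\I$ together with the monomial-ideal divisibility lemma applied to $\LT(r)$ inside $\GIdeal{\LT(g_1),\ldots,\LT(g_t)}=\GIdeal{\LT(\I)}$. The only cosmetic difference is that the textbook proof packages this same divisibility argument into the uniqueness clause of Proposition~\ref{th:gbprop} and then deduces the corollary, whereas you apply it directly to the remainder $r$; the mathematics is identical.
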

\begin{definition}\label{def:pdiv}
We will write $\reduce f F$ for the remainder of $f$ by the ordered $s$-tuple $F=(f_1,\ldots,f_s)$. If $F$ is a $\GB$ basis for $\spn{f_1,\dots,f_s}$, then we can regard $F$ as a set (without any particular order) by Proposition~\ref{th:gbprop}.
\end{definition}
%

%
The ``obstruction'' to $\{g_1,\ldots, g_t\}$ being a \GB basis is the possible occurrence of polynomial combinations of the $g_i$ whose leading terms are not in the ideal generated by the $\LT( g_i)$. One way (actually the only way) this can occur is if the leading terms in a suitable combination cancel, leaving only smaller terms. The latter is fully captured by the so called $S$-polynomials that play a fundamental role in \GB basis theory.
\begin{definition}\label{def:spoly}
  Let $f,g\in \Field[x_1,\ldots,x_n]$ be nonzero polynomials.
    If $\multideg(f) =\alpha$ and $\multideg(g)= \beta$, then let $\gamma=(\gamma_1,\ldots,\gamma_n)$, where $\gamma_i = \max(\alpha_i,\beta_i)$ for each $i$. We call $x^\gamma$ the \emph{\textbf{least common multiple}} of $\LM(f)$ and $\LM(g)$, written $x^\gamma = \LCM(\LM(f),\LM(g))$.
    The \emph{\textbf{$S$-polynomial}} of $f$ and $g$ is the combination $S(f,g) = \frac{x^\gamma}{\LT(f)}\cdot f - \frac{x^\gamma}{\LT(g)}\cdot g$.
\end{definition}
The use of $S$-polynomials to eliminate leading terms of multivariate polynomials generalizes the row reduction algorithm for systems of linear equations. If we take a system of homogeneous linear equations (i.e.: the constant coefficient equals zero), then it is not hard to see that bringing the system in triangular form yields a \GB basis for the system.
\begin{theorem}[\cite{Cox:2015}, Theorem 3, p.105, \textbf{Buchberger's Criterion}]\label{th:crit}
A basis $G=\{g_1,\ldots,g_t\}$ for an ideal $\I$ is a \GB basis if and only if $S(g_i,g_j)\rightarrow_G 0$ for all $i\not=j$.
\end{theorem}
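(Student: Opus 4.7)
The easy direction is essentially a bookkeeping step: if $G$ is a \GB basis, then every $S$-polynomial $S(g_i, g_j)$ lies in $\I$ (it is an explicit $\Field[x]$-combination of $g_i$ and $g_j$), so by Corollary~\ref{th:imp} its remainder on division by $G$ is $0$, i.e.\ $S(g_i,g_j)\to_G 0$. So the whole content of the theorem is in the converse direction, which I now plan to attack.

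The plan for the converse is the standard ``lowering the witness degree'' argument. Assume $S(g_i,g_j)\to_G 0$ for every pair $i\neq j$. To show $G$ is a \GB basis it suffices, by the definition, to show $\LT(f)\in\langle\LT(g_1),\ldots,\LT(g_t)\rangle$ for every nonzero $f\in\I$. Fix such an $f$. Among all representations $f=\sum_{i=1}^{t}A_i g_i$ with $A_i\in\Field[x_1,\ldots,x_n]$, choose one that minimizes $\delta \mydef \max_{i:A_ig_i\neq 0}\multideg(A_ig_i)$ under the monomial order $>$. Clearly $\multideg(f)\leq \delta$. I claim that $\multideg(f)=\delta$: if this claim holds, then $\LT(f)$ is one of the leading terms on the right-hand side, hence divisible by some $\LT(g_i)$, finishing the proof.

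Suppose for contradiction $\multideg(f)<\delta$, so the top-$\delta$ terms in $\sum A_i g_i$ cancel. Split the sum as
\[
f=\sum_{i\in S}\LT(A_i)\,g_i \;+\; \sum_{i\in S}(A_i-\LT(A_i))g_i \;+\; \sum_{i\notin S} A_i g_i,
\]
where $S=\{i:\multideg(A_ig_i)=\delta\}$. Only the first sum has degree-$\delta$ terms, and they must cancel. The key technical step, which I expect to be the main obstacle, is the ``cancellation lemma'': whenever $\sum_{i\in S}c_i x^{\alpha(i)}g_i$ has multidegree $<\delta$ while each $x^{\alpha(i)}\LT(g_i)=x^\delta$, one can rewrite this sum as an $\Field$-linear combination $\sum_{i,j\in S} d_{ij}\, x^{\delta-\gamma_{ij}} S(g_i,g_j)$, where $x^{\gamma_{ij}}=\LCM(\LM(g_i),\LM(g_j))$. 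This is proved by a telescoping identity on consecutive pairs, and has the crucial property that each $x^{\delta-\gamma_{ij}}S(g_i,g_j)$ has multidegree strictly less than $\delta$.

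Now apply the hypothesis $S(g_i,g_j)\to_G 0$: by Definition~\ref{def:reduction} this gives $S(g_i,g_j)=\sum_{k}B_{ij,k}g_k$ with $\multideg(B_{ij,k}g_k)\leq\multideg(S(g_i,g_j))<\gamma_{ij}$. Multiplying by $x^{\delta-\gamma_{ij}}$ preserves this strict inequality and yields a representation of each term $x^{\delta-\gamma_{ij}}S(g_i,g_j)$ as a $G$-combination with every summand of multidegree $<\delta$. Substituting back into the decomposition of $f$ produces a new representation $f=\sum_i A'_i g_i$ in which $\max_i\multideg(A'_ig_i)<\delta$, contradicting the minimality of $\delta$. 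Hence $\multideg(f)=\delta$ as claimed, and $\LT(f)\in\langle\LT(g_1),\ldots,\LT(g_t)\rangle$, so $G$ is a \GB basis. \qed
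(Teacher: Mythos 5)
Your proposal is correct and follows exactly the standard argument for Buchberger's Criterion from the cited source (Cox--Little--O'Shea): the easy direction via the uniqueness/vanishing of remainders for a \GB basis, and the converse via a minimal-multidegree representation $f=\sum_i A_ig_i$, the cancellation lemma rewriting the cancelling top-degree part as a combination of terms $x^{\delta-\gamma_{ij}}S(g_i,g_j)$, and the hypothesis $S(g_i,g_j)\rightarrow_G 0$ to strictly lower $\delta$, contradicting minimality. The paper itself does not reprove this theorem but quotes it from \cite{Cox:2015}, and your sketch is essentially that textbook proof, so there is nothing to add.
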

%
By Theorem~\ref{th:crit} it is easy to show whether a given basis is a \GB basis. Indeed, if $G$ is a \GB basis then given $f\in \Field[x_1,\dots,x_n]$, $f|_G$ is unique and it is the remainder on division of $f$ by $G$, no matter how the elements of $G$ are listed when using the division algorithm.

Furthermore, Theorem~\ref{th:crit} leads naturally to an algorithm for computing \GB bases for a given ideal $\I=\langle f_1,\ldots,f_s \rangle$: start with a basis $G=\{f_1,\ldots,f_s\}$ and for any pair $f,g\in G$ with $S(f,g)|_G\not= 0$ add $S(f,g)|_G$ to $G$.
This is known as Buchberger's algorithm~\cite{BuchbergerThesis} (for more details see Algorithm~\ref{Algo:buchbergerAlgo} in Section~\ref{sect:buchbergerAlgo}).

Note that Algorithm~\ref{Algo:buchbergerAlgo} is non-deterministic and the resulting \GB basis in not uniquely determined by the input. This is because the normal form $S(f,g)|_G$ (see Algorithm~\ref{Algo:buchbergerAlgo}, line~\ref{eq:reminder}) is not unique as already remarked.
We observe that one simple way to obtain a deterministic algorithm (see \cite{Cox:2015}, Theorem~2, p. 91) is to replace $h:=S(f,g)|_G$ in line~\ref{eq:reminder} with $h:=\reduce {S(f,g)} G$ (see Definition~\ref{def:pdiv}), where in the latter $G$ is an ordered tuple. However, this is potentially dangerous and inefficient. Indeed, there are simple cases where the combinatorial growth of set $G$ in Algorithm~\ref{Algo:buchbergerAlgo} is out of control very soon.



\subsubsection{\GB basis construction}\label{sect:buchbergerAlgo}
Buchberger's algorithm~\cite{BuchbergerThesis} can be formulated as in Algorithm~\ref{Algo:buchbergerAlgo}. 
\begin{algorithm}
\caption{Buchberger's Algorithm}
\label{Algo:buchbergerAlgo}
\begin{algorithmic}[1]
\STATE \textbf{Input}: A finite set $F=\{f_1,\ldots,f_s\}$ of polynomials
\STATE \textbf{Output}: A finite \GB basis $G$ for $\spn{f_1,\ldots,f_s}$
\STATE $G:= F$\label{eq:startingG}
\STATE $C:= G\times G$
\WHILE{$C\not = \emptyset$}
\STATE Choose a pair $(f,g)\in C$
\STATE $C:=C\setminus \{(f,g)\}$
\STATE $h:=S(f,g)|_G$ \label{eq:reminder}
\IF {$h\not=0$}\label{eq:Buch_cond}
\STATE $C:= C\cup (G\times \{h\})$
\STATE $G:= G\cup \{h\}$\label{eq:addspoly}
\ENDIF
\ENDWHILE
\STATE Return G
\end{algorithmic}
\end{algorithm}
The pairs that get placed in the set $C$ are often referred to as \emph{critical pairs}. Every newly added reduced $S$-polynomial $h$ enlarges the set $C$. If we use $h:=\reduce {S(f,g)} G$ in line~\ref{eq:reminder} then there are simple cases where the situation is out of control. This combinatorial growth can be controlled to some extent be eliminating unnecessary critical pairs.





\section{The Ideal Membership Problem: Tractability}\label{sect:tract}
%

We provide an overview of the proof of Theorem~\ref{th:result1} with the main ideas and a technical lemma. These ideas are further developed and used in Section~\ref{sect:majority} and Section~\ref{sect:min_max}.
We solve the membership question by using \GB bases techniques. 
A $\GB$ basis provides a representation of an ideal that allows us to easily decide membership (see Section~\ref{sect:GBbasics} and Corollary~\ref{th:imp}). 
\begin{remark}
  From now on, even where not explicitly written, we will assume that monomials are ordered according to the \emph{graded lexicographic order}, grlex for short, (see Section~\ref{sect:GBbasics} or \cite[Definition~5 on p. 58]{Cox:2015} for additional details). Other total degree orderings (i.e. ordered according to the total degree first) could be used with the same results.
\end{remark}

\subsection{Overview of the Proof of Theorem \ref{th:result1}}
 We will make use of the following definition.
\begin{definition}\label{def:2terms}
  For a given set $X=\{x_1,\ldots,x_n\}$ of variables and for any set $S\subseteq [n]$ possibly empty, $\alpha\in \{0, \pm 1\}$, let a \textbf{\emph{term}} be defined as \footnote{The empty product has the value 1.}
  \begin{align*}
    \tau^+(S)&\mydef \alpha\prod_{i\in S}x_i, \quad \textsc{*positive term*} \\
    \tau^-(S)&\mydef\alpha\prod_{i\in S}(x_i-1). \quad \textsc{*negative term*}
  \end{align*}
  For $S_1,S_2\subseteq [n]$ and $i\in [n]$, let a \textbf{\emph{2-terms polynomial}} be a polynomial that is the sum of two terms or it is $\pm(x_i^2-x_i)$.
  We say that a set $G$ of polynomials is \textbf{\emph{2-terms structured}} if each polynomial from $G$ is a 2-terms polynomial.

  We further distinguish between the following special 2-terms polynomials:
  \begin{align*}
    \T^+ & \mydef \{\tau^+(S_1)+\tau^+(S_2)\mid S_1,S_2\subseteq[n]\} \cup\{\pm(x_i^2-x_i)\mid i\in [n]\},  \quad \textsc{*positive 2-terms*}\\
    \T^- & \mydef \{\tau^-(S_1)+\tau^-(S_2)\mid S_1,S_2\subseteq[n]\} \cup\{\pm(x_i^2-x_i)\mid i\in [n]\}. \quad \textsc{*negative 2-terms*}
  \end{align*}
  \end{definition}
  
\subsubsection{Main Ideas}
Let $G =\{g_1,\ldots, g_t\}$ be the \emph{reduced} \GB basis (see Definition~\ref{def:redGB}) for the combinatorial ideal $\I_\Cc$ corresponding to a given $\emph{\CSP}(\Gamma)$ instance $\Cc$. Recall (see~\cite[Theorem~5, p.93]{Cox:2015}) that for a given monomial ordering $G$ is unique. We assume that $\Pol(\Gamma)$ contains at least one of the three operations $\{\Max, \Min, \Majority\}$.
The proof of Theorem~\ref{th:result1} will show the following facts:
  \begin{itemize}
    \item The reduced \GB basis $G$ of $\I_C$ has the 2-terms structure (for grlex order);
    \item If $\Majority\in\Pol(\Gamma)$ then every 2-terms polynomial $g\in G$ has degree at most 2.
    \item If $\Max\in\Pol(\Gamma)$ then every $g\in G$ is a negative 2-terms polynomial (of arbitrarily large degree).
    \item If $\Min\in\Pol(\Gamma)$ then every $g\in G$ is a positive 2-terms polynomial (of arbitrarily large degree).
  \end{itemize}

If $\Majority\in\Pol(\Gamma)$ then the 2-terms characterization of the reduced \GB bases will be sufficient to guarantee the tractability of \GB basis computation.
A key part of the \GB basis algorithm is the computation of the so called $S$-polynomials in normal form (see definitions~\ref{def:spoly} and \ref{def:reduction} and Theorem~\ref{th:crit}).
We show how to compute $S(f,g)|_G$ (see Algorithm~\ref{Algo:buchbergerAlgo}, line~\ref{eq:reminder}) in such a way Buchberger's algorithm will take $n^{O(1)}$ time to compute a \GB basis.

If $\Min\in\Pol(\Gamma)$ or $\Max\in\Pol(\Gamma)$ then we will observe that
for any given 2-terms polynomial $p$ we can efficiently check whether $p\in \I_C$. This implies that we can compute the ``truncated'' reduced \GB basis $G_d=G\cap \Field[x_1,\ldots,x_n]_d$ in $n^{O(d+1)}$ time, for any degree $d$. If we ever wish to test membership in $\I_\Cc$ for some
polynomial $f$ of degree $d$, we need only to compute $G_d$ (assuming grlex order). Indeed, by Proposition~\ref{th:gbprop} and Corollary~\ref{th:imp}, the membership test can be computed by using only polynomials from $G_d$ and therefore we have
\begin{align*}
  f\in \I_\Cc\cap \Field[x_1,\ldots,x_n]_d&\Leftrightarrow \reduce f {G_d}=0.
\end{align*}
This yields an efficient algorithm for the membership problem (the size of the input polynomial $f$ is $n^{O(d)}$).
In general the exponential dependence on the input polynomial degree $d$ is unavoidable since the input size can be $n^{\Omega(d)}$. However, we complement this result by showing that when $f$ is a ``sparse'' polynomial of high degree then we can remove the exponential dependance on $d$ by (i) either efficiently compute a subset (that depends on $f$) $G_f\subseteq \I_\Cc$ such that $\reduce f {G_f}=0$, (ii) or show a certificate that $f\not\in \I_\Cc$.

\paragraph{Techniques.} As discussed in Section~\ref{sect:GBbasics}, Theorem~\ref{th:crit} leads naturally to an algorithm, known as Buchberger's algorithm. A fundamental role is played by the $S$-polynomials in normal form, i.e. $S(f,g)|_G$ which is the building block to compute a \GB basis: $S(f,g)|_G$ is an operation that combines any two elements from the ideal to form a third polynomial from the ideal.
%
For a given $\I=\spn{f_1,\dots,f_s}$ we have that $G=\{g_1,\ldots, g_k\}$ is a \GB for $\I$ if $\spn{f_1,\dots,f_s}=\spn{g_1,\dots,g_k}$ and $S(f,g)|_G=0$ for every $f,g\in G$.

Recall that Buchberger's algorithm (see Section~\ref{sect:GBbasics}, Theorem~\ref{th:crit} and Algorithm~\ref{Algo:buchbergerAlgo}) is \emph{non-deterministic} because a normal form $S(f,g)|_G$ (see Definition~\ref{def:reduction}) is \emph{not unique} (unless $G$ is a {\GB} basis). Indeed, a normal form $S(f,g)|_G$ can be obtained by repeatedly performing the following until it cannot be further applied: choose any $g\in G$ such that $\LT(g)$ divides some term $t$ of $S(f,g)$ and replace $S(f,g)$ with $S(f,g)-\frac{t}{\LT(g)}g$. Note that the order in which we choose the polynomials $g$ in the division process is not specified.

The order in which we choose polynomials will play a fundamental role in this paper.
With this in mind, in the next section we present a technical lemma (the \emph{Interlacing Lemma}~\ref{th:interlacing-lemma}) that will be used to compute a ``special'' normal form $S(f,g)^*|_G$ that preserves the 2-terms structure. This implies that any reduced \GB basis is 2-terms structured. Indeed, for grlex monomial ordering, we compute a \GB basis $G$ for $\I_\Cc$ by using Buchberger's Algorithm~\ref{Algo:buchbergerAlgo} with the following change: at line~\ref{eq:reminder} of Algorithm~\ref{Algo:buchbergerAlgo}, replace $S(f,g)$ with $S(f,g)^*$ (see~\eqref{eq:intnormal}) and use the reduced by $G$ polynomial $S(f,g)^*|_G$.
Note that $S(f,g)^*|_{G}$ is a normal form of $S(f,g)$ by $G$, namely there is an ordering of the polynomials division that make $S(f,g)^*|_{G}=S(f,g)|_G$.
 Therefore Algorithm~\ref{Algo:buchbergerAlgo} with the above specified changes returns a \GB basis, since Buchberger's Algorithm is guaranteed to return a \GB basis independently on the order in which we perform polynomial divisions at line~\ref{eq:reminder}.

It follows that if the starting generators of the combinatorial ideal $\I_\Cc$ are 2-terms polynomials then the used $S(f,g)^*|_G$ operations will preserves this structure. The 2-terms structure of the reduced \GB bases follows by observing that division of 2-terms polynomials preserves the 2-terms structure property as well.

\begin{remark}
  Note that there are normal forms $S(f,g)|_G$ that do not guarantee the 2-terms structure.
\end{remark}

\subsection{The Interlacing Lemma}\label{sect:interlacing}
The $S$-polynomials are the building blocks to compute a \GB basis (see Definition~\ref{def:spoly}). An $S$-polynomial combines any two elements from the ideal to form a third polynomial from the ideal. Every \GB basis can be computed by adding non-zero $S$-polynomials in normal form.

In the following we prove a key structural property (\emph{Interlacing Property}) of the $S$-polynomials in normal form that will be used several times and will be at the heart of the subsequent proofs. The Interlacing Property shows how two polynomials interlace in the corresponding $S$-polynomial.

 We believe that the Interlacing Property will be useful for generalizing this paper results to the finite domain case, as confirmed by preliminary investigations by the author.

\begin{figure}[hbtp]
    \centering
    \includegraphics[scale=1]{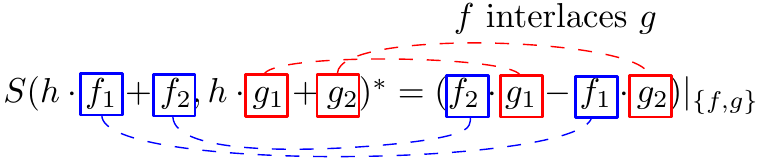}
    \caption{Interlacing property of $S$-polynomials ($\LC(h)=\LC(f_1)=\LC(g_1)=1$).}
  \label{fig:interlacing}
\end{figure}

\begin{lemma}[Interlacing Lemma]\label{th:interlacing-lemma}
Let $>$ be a monomial order.
Suppose that we have $f,g\in \Field[x_1,\ldots,x_n]$ with $f= h\cdot f_1+f_2$ and $g= h\cdot g_1+g_2$ such that
\begin{align}
&h \cdot f_1\not= 0\label{int0f},\\
&h \cdot g_1\not= 0,\\
&\text{ if } f_2\not=0 \text{ then }\LM(h \cdot f_1)>\LM(f_2), \label{eq:int1}\\
&\text{ if } g_2\not=0 \text{ then }\LM(h \cdot g_1)>\LM(g_2), \label{eq:int2}\\
&\LCM(\LM(f_1),\LM(g_1))=\LM(f_1)\cdot \LM(g_1). \label{eq:int3}
\end{align}
Let
\begin{align}\label{eq:intnormal}
  S(f,g)^* &\mydef \frac{\left(f_2 \cdot g_1-f_1 \cdot g_2\right)|_{\{f,g\}}}{\LC(h)\cdot \LC(f_1)\cdot \LC(g_1)}.
\end{align}
Then $S(f,g)\rightarrow_{\{f,g\}} S(f,g)^*$ (Interlacing Property).
\end{lemma}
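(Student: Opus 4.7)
The plan is to prove the Interlacing Lemma by computing $S(f,g)$ algebraically, producing the explicit interlacing identity, and then exhibiting it as a valid reduction to $S(f,g)^*$. First, since $\LM(f) = \LM(h)\LM(f_1)$ and $\LM(g) = \LM(h)\LM(g_1)$ (which follow from \eqref{eq:int1}, \eqref{eq:int2}) and $\LM(f_1),\LM(g_1)$ are coprime by \eqref{eq:int3}, we obtain $\LCM(\LM(f),\LM(g)) = \LM(h)\LM(f_1)\LM(g_1) =: x^\gamma$. Plugging into the $S$-polynomial definition yields
\[
S(f,g) \;=\; \frac{\LM(g_1)}{\LC(h)\LC(f_1)}\, f \;-\; \frac{\LM(f_1)}{\LC(h)\LC(g_1)}\, g.
\]

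Next, I would clear denominators. Setting $N := \LC(h)\LC(f_1)\LC(g_1)$ and writing $\widetilde{f}_1 := f_1 - \LT(f_1)$, $\widetilde{g}_1 := g_1 - \LT(g_1)$ for the tails, one has $\LT(g_1) = \LC(g_1)\LM(g_1) = g_1 - \widetilde{g}_1$ and similarly for $f_1$, so
\[
N\cdot S(f,g) \;=\; \LT(g_1)\,f - \LT(f_1)\,g \;=\; (g_1 f - f_1 g) - \widetilde{g}_1\, f + \widetilde{f}_1\, g.
\]
The key interlacing step is to substitute $f = hf_1 + f_2$ and $g = hg_1 + g_2$ into $g_1 f - f_1 g$: the cross-term $hf_1 g_1$ appears in both $g_1 f$ and $f_1 g$ with opposite signs and cancels, leaving $g_1 f - f_1 g = f_2 g_1 - f_1 g_2$. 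This exhibits the interlacing — the way the tails $f_2,g_2$ of $f,g$ pair with the leading parts $g_1,f_1$ — and yields the identity
\[
N\cdot S(f,g) \;=\; (f_2 g_1 - f_1 g_2) - \widetilde{g}_1\, f + \widetilde{f}_1\, g.
\]

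To finish, I would assemble the reduction. By definition $(f_2 g_1 - f_1 g_2)|_{\{f,g\}} = N \cdot S(f,g)^*$, so there are polynomials $C_1,C_2$ witnessing $f_2 g_1 - f_1 g_2 = C_1 f + C_2 g + N\cdot S(f,g)^*$ as a valid reduction. Substituting into the previous identity gives $S(f,g) = \frac{C_1-\widetilde{g}_1}{N}\, f + \frac{C_2+\widetilde{f}_1}{N}\, g + S(f,g)^*$, and the normal-form condition on $S(f,g)^*$ transfers from $(f_2 g_1 - f_1 g_2)|_{\{f,g\}}$. The hard part will be the multideg condition of Definition~\ref{def:reduction}: one must check that the multideg of each multiplier times $f$ or $g$ does not exceed $\multideg(S(f,g))$. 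Each constituent product $\widetilde{g}_1 f$, $\widetilde{f}_1 g$, $C_1 f$, $C_2 g$ has multideg strictly less than $\gamma$ — the first two because $\multideg(\widetilde{f}_1)<\multideg(f_1)$ and $\multideg(\widetilde{g}_1)<\multideg(g_1)$, the last two because $\multideg(f_2 g_1 - f_1 g_2)<\gamma$ by \eqref{eq:int1}, \eqref{eq:int2}. Cancellations inside $S(f,g)$ could in principle lower $\multideg(S(f,g))$ below that of an individual summand, so I would handle this by realizing the claimed decomposition as a concrete sequence of elementary one-step reductions (first cancel the contributions coming from $-\widetilde{g}_1 f/N$ and $\widetilde{f}_1 g/N$ against terms of $S(f,g)$, then continue reducing what remains to reach $S(f,g)^*$). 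Each elementary step of the multivariate division algorithm satisfies the multideg condition by construction, and the property composes, delivering $S(f,g) \rightarrow_{\{f,g\}} S(f,g)^*$ as required.
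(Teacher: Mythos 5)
Your computation of the interlacing identity is exactly the paper's: you correctly get $\LCM(\LM(f),\LM(g))=\LM(h)\LM(f_1)\LM(g_1)$ and $N\cdot S(f,g)=(f_2g_1-f_1g_2)-\widetilde{g}_1 f+\widetilde{f}_1 g$, and the divisibility condition on $S(f,g)^*$ does transfer from the normal form of $q=f_2g_1-f_1g_2$. But the step you yourself flag as "the hard part" is precisely where the proof lives, and your proposed resolution does not close it. The multidegree requirement of Definition~\ref{def:reduction} is relative to $\multideg(S(f,g))$, not to $\gamma$: all four products $\widetilde{g}_1 f$, $\widetilde{f}_1 g$, $C_1 f$, $C_2 g$ having multidegree below $\gamma$ is not enough, because the leading terms of the two combined multiplier products $\bigl(\tfrac{C_1-\widetilde{g}_1}{N}\bigr)f$ and $\bigl(\tfrac{C_2+\widetilde{f}_1}{N}\bigr)g$ could cancel against each other, driving $\multideg(S(f,g))$ strictly below them and invalidating the exhibited representation. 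The paper closes exactly this hole with two facts you never establish: (a) from condition (iii) of the division of $q$ together with \eqref{eq:int1}--\eqref{eq:int2} one gets $\LM(A_f)<\LM(g_1)$ (the multiplier of $f$ in the reduction of $q$ is small), and (b) the coprimality hypothesis \eqref{eq:int3} then forces $\LM\bigl((f_1-\LT(f_1)+A_g)\,g\bigr)\neq\LM\bigl((\LT(g_1)-g_1+A_f)\,f\bigr)$, so the larger of the two survives (it cannot be cancelled by $S(f,g)^*$, whose terms are not divisible by $\LT(f),\LT(g)$) and $\multideg(S(f,g))$ dominates both. Note that \eqref{eq:int3} enters your argument only in computing the LCM, whereas it is essential for this clash argument; even in the degenerate case $f_2=g_2=0$ (so $q=0$) the clash argument is still needed.

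Your fallback -- "realize the claimed decomposition as a concrete sequence of elementary one-step reductions" -- is not a proof and, as stated, cannot be made one without doing the above work anyway. An elementary step may only cancel a term that is actually present in the current polynomial and divisible by $\LT(f)$ or $\LT(g)$; the terms of $-\widetilde{g}_1\LT(f)/N$ and $\widetilde{f}_1\LT(g)/N$ that you want to "cancel first" may have been merged with or annihilated by terms of $q/N$ inside $S(f,g)$, so the division algorithm need not be able to perform those steps, and you cannot pass through the intermediate polynomial $q/N$. Moreover, running the division algorithm on $S(f,g)$ certainly produces \emph{some} valid reduction to \emph{some} normal form, but the lemma asserts reduction to the specific polynomial $S(f,g)^*$, which is defined via a division of the different polynomial $q$; showing that choices in the two division processes can be matched is essentially the content of the lemma and is exactly what the paper's leading-monomial analysis supplies. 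So the proposal has the right identity and the right target, but a genuine gap at the decisive multidegree verification.
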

\begin{proof}
Note that $\LCM(\LM(f),\LM(g))=\LM(h)\cdot\LM(f_1)\cdot \LM(g_1)$.
By Definition~\ref{def:spoly} we have
\begin{align}
  S(f, g) & = \frac{\LM(g_1)}{\LC(f)}\cdot f - \frac{\LM(f_1)}{\LC(g)}\cdot g \nonumber\\
  & = \frac{g_1-(g_1-\LC(g_1)\cdot \LM(g_1))}{\LC(h)\cdot \LC(f_1)\cdot \LC(g_1)}\cdot f - \frac{f_1-(f_1-\LC(f_1)\cdot \LM(f_1))}{\LC(h)\cdot \LC(f_1)\cdot \LC(g_1)}\cdot g \nonumber\\
  & = {C}
  \left(f_2\cdot g_1 - f_1\cdot g_2 +(f_1-\LT(f_1))\cdot g-(g_1-\LT(g_1))\cdot f \right).\label{s_temp}
\end{align}
where $C=1/(\LC(h)\cdot \LC(f_1)\cdot \LC(g_1))$.

Let $q=f_2 \cdot g_1-f_1 \cdot g_2$.
For a normal form $q|_{\{f,g\}}$ of $q$ modulo $\{f,g\}$ the following holds (see Definition~\ref{def:reduction}):
\begin{enumerate}
  \item[(i)] $q=A_f f+A_g g+ q|_{\{f,g\}}$ for some $A_f,A_g\in \Field[x_1,\ldots,x_n]$.\label{cond1}
  \item[(ii)] No term of $q|_{\{f,g\}}$ is divisible by any of $\LT(f),\LT(g)$.\label{cond2}
  \item[(iii)] For any $p\in \{f,g\}$, whenever $A_p p\not=0$, we have $\multideg(q)\geq \multideg(A_p p)$.\label{cond3}
\end{enumerate}
Note that if $A_f f\not=0$ then (iii) implies that
\begin{align}\label{c3}
  \LM(g_1)>\LM(A_f).
\end{align}
Indeed, by contradiction assume $\LM(g_1)\leq \LM(A_f)$ then by~(iii) and~\eqref{int0f} we have $q\not= 0$ (and therefore either $f_2\not =0$ or $g_2\not=0$ or both) and
\begin{align*}
  \multideg(A_f f)&=\multideg(\LM(A_f)\LM(f_1)\LM(h)) \\
  &=\multideg(\LM(A_f))+\multideg(\LM(f_1))+\multideg(\LM(h))\\
  &\geq\multideg(\LM(g_1))+\multideg(\LM(f_1))+\multideg(\LM(h))\\
  &>^{\text{by }\eqref{eq:int1} \text{ if }f_2\not =0 \text{ and }\eqref{eq:int2} \text{ if }g_2\not =0} \multideg(f_2 \cdot g_1-f_1 \cdot g_2).
\end{align*}
The latter inequality contradicts (iii).

From~\eqref{s_temp} and (i), it follows that
\small{
\begin{align}\label{eq:sext}
  S(f,g) & = C
  \left(\overbrace{(f_1-\LT(f_1)+A_g)}^{B_g}\cdot g+\overbrace{(\LT(g_1)-g_1+A_f)}^{B_f} \cdot f \right)+ C\cdot q|_{\{f,g\}}.
\end{align}
}
For $p\in \{f,g\}$, let $B_p$ be defined as in~\eqref{eq:sext}.
By Definition~\ref{def:reduction}, the claim follows from~\eqref{eq:sext} by
recalling that no term of $S(f,g)^*=C\cdot q|_{\{f,g\}}$ is divisible by any of $\LT(f),\LT(g)$ (see (ii)) and by showing that whenever $B_p p\not=0$ we have $\multideg(S(f,g))\geq \multideg(B_p p)$.
The latter follows by showing that $\LM(B_g\cdot g)\not = \LM(B_f\cdot f)$, whenever $B_g\cdot g\not=0$ and $B_f\cdot f\not=0$ (otherwise we are done). Indeed,
\begin{align*}
\LM(B_g\cdot g)&=\LM(h)\cdot \LM(g_1)\cdot \LM(f_1-\LT(f_1)+A_g),\\
\LM(B_f\cdot f)&=\LM(h)\cdot \LM(f_1)\cdot \LM(\LT(g_1)-g_1+A_f).
\end{align*}
By contradiction, if $\LM(B_g\cdot g) = \LM(B_f\cdot f)$ then 
\begin{align*}
\LM(f_1-\LT(f_1)+A_g)&=\frac{\LM(f_1)\cdot \LM(\LT(g_1)-g_1+A_f)}{\LM(g_1)}.
\end{align*}
 The latter is impossible because
 \begin{enumerate}
   \item $\LCM(\LM(f_1),\LM(g_1))=\LM(f_1)\cdot \LM(g_1)$ by \eqref{eq:int3};
   \item $\LM(g_1)>\LM(g_1-\LT(g_1)+A_f)$: this follows by noting that $\LM(g_1-\LT(g_1))<\LM(g_1)$ and  $\LM(A_f)<\LM(g_1)$ by~\eqref{c3}.
 \end{enumerate}
\end{proof}

Note that for any given pair of polynomials $f,g$ there could be several ways to decompose $f,g$ into a sum of 2 components still satisfying the conditions of Lemma~\ref{th:interlacing-lemma} and therefore yielding different normal forms. We will clarify how to apply it depending on the application. However, most of the time it will be ``natural'' since it will be applied to 2-terms polynomials and the 2 components (one possibly empty) of the input polynomials for the lemma are promptly identified.
%

\paragraph{Example: Set cover constraints.}\label{example-VC}

In \cite{Weitz17}, Weitz raised the question of effective derivation for problems without the strong symmetries discussed in his thesis~\cite{Weitz17}. As a starting example Weitz suggested the question whether the {\sc{vertex cover}} formulation~\eqref{eq:vc} for a given graph $(V,E)$ admits an effective derivation (see \cite[Chapter 6]{Weitz17}):
\begin{align}\label{eq:vc}
   \F_{VC}(V,E)=\{x_j^2-x_j\mid j\in V\}\cup \{(1-x_i)(1-x_j)\mid (i,j)\in E\}.
 \end{align}
We answer in the affirmative by showing that $\F_{VC}(V,E)$ admits the strongest effective derivation possible, namely it is a \GB basis, i.e. 1-effective for the vanishing ideal of the set of feasible solutions. (Actually in this paper we show this for two generalizations of \eqref{eq:vc}, namely {\sc set cover} and {\sc 2-sat}.)

Consider any $m\times n$ $0$-$1$ matrix $A$, and let $\F$ be the feasible region for the $0$-$1$ set covering problem defined by $A$:
\begin{align*}
\F=\{x\in \{0,1\}^n\mid Ax\geq e\},
\end{align*}
where $e$ is the vector of 1s.
 We denote by $A_i\subseteq \{1,\ldots,n\}$ the set of indices of nonzeros in the $i$-th row of $A$ (namely the \emph{support} of the $i$-th constraint).
 Let
 \begin{align}\label{eq:gb_cover}
   G & =\{x_j^2-x_j\mid j\in[n]\}\cup \{\prod_{j\in A_i}(1-x_j)\mid i\in[m]\}.
 \end{align}

\begin{proposition}
  Set~\eqref{eq:gb_cover} is a \GB basis for the vanishing ideal $\I(\F)$.
\end{proposition}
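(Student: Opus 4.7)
The plan is to prove the proposition in two stages: first establish $\langle G\rangle = \Ideal{\F}$, and then verify that $G$ satisfies Buchberger's Criterion under the grlex order.

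For the first stage, I would check $\Variety{G} = \F$ geometrically: the domain polynomials $x_j^2 - x_j$ cut the variety down to $\{0,1\}^n$, and on $\{0,1\}^n$ the polynomial $\prod_{j\in A_i}(1-x_j)$ vanishes exactly when some $x_j$ with $j\in A_i$ equals $1$, i.e.\ when the $i$-th set covering inequality $\sum_{j\in A_i}x_j\geq 1$ holds. Because $G$ contains the full set of domain polynomials, $\langle G\rangle$ is a zero-dimensional radical ideal by the same reasoning as in Section~\ref{sect:idealCSP} (cf.~\eqref{eq:ICradical}). Hence by the Strong Nullstellensatz~\eqref{eq:strong_nstz}, $\langle G\rangle = \Ideal{\Variety{G}} = \Ideal{\F}$.

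For the second stage, I would verify $S(f,g)\rightarrow_G 0$ for every pair $f,g\in G$ by systematically invoking the Interlacing Lemma~\ref{th:interlacing-lemma}. The relevant leading monomials are $\LM(x_j^2-x_j) = x_j^2$ and $\LM\bigl(\prod_{j\in A_i}(1-x_j)\bigr) = \prod_{j\in A_i}x_j$. The strategy is, for each pair, to pick a decomposition $f = hf_1 + f_2$, $g = hg_1 + g_2$ in which $h$ absorbs every shared variable, so that $\LM(f_1)$ and $\LM(g_1)$ become coprime. Concretely: (a) for two domain polynomials $x_j^2-x_j$, $x_k^2-x_k$ with $j\neq k$, take $h=1$; (b) for $x_j^2-x_j$ and $\prod_{k\in A_i}(1-x_k)$ with $j\notin A_i$, take $h=1$; (c) for $x_j^2-x_j$ and $\prod_{k\in A_i}(1-x_k)$ with $j\in A_i$, factor $x_j^2-x_j = x_j(x_j-1)$ and $\prod_{k\in A_i}(1-x_k) = (1-x_j)\prod_{k\in A_i\setminus\{j\}}(1-x_k)$, and take $h=x_j$; (d) for two cover polynomials with supports $A_i$ and $A_{i'}$, take $h=\prod_{k\in A_i\cap A_{i'}}(1-x_k)$.

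In cases (a), (b), and (d) the decompositions have $f_2 = g_2 = 0$, so the numerator $f_2 g_1 - f_1 g_2$ in the definition \eqref{eq:intnormal} of $S(f,g)^*$ is already zero, and the Interlacing Lemma yields $S(f,g)\rightarrow_{\{f,g\}}0$ at once. In case (c), with $f_1 = x_j - 1$, $f_2 = 0$, $g_1 = -\prod_{k\in A_i\setminus\{j\}}(1-x_k)$, $g_2 = \prod_{k\in A_i\setminus\{j\}}(1-x_k)$, a short computation gives $f_2 g_1 - f_1 g_2 = -(x_j-1)\prod_{k\in A_i\setminus\{j\}}(1-x_k) = \prod_{k\in A_i}(1-x_k) = g$, which clearly reduces to $0$ modulo $\{f,g\}$. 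The main bookkeeping obstacle is simply verifying the five hypotheses of the Interlacing Lemma in each case; the coprimality condition $\LCM(\LM(f_1),\LM(g_1)) = \LM(f_1)\cdot\LM(g_1)$ is enforced by our choice of $h$, while the inequalities $\LM(hf_1)>\LM(f_2)$ and $\LM(hg_1)>\LM(g_2)$ are trivially met because the residuals $f_2$, $g_2$ either vanish or involve strictly fewer variables than $hg_1$. Buchberger's Criterion then gives the claim.
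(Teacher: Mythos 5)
Your proof is correct and follows essentially the same route as the paper: Buchberger's Criterion (Theorem~\ref{th:crit}) certified pair by pair via the Interlacing Lemma~\ref{th:interlacing-lemma} with decompositions chosen so that $\LM(f_1)$ and $\LM(g_1)$ are coprime, plus the (paper-implicit, here explicit) observation that $\langle G\rangle=\Ideal{\F}$ via the domain polynomials and the Nullstellensatz. The only divergence is your case (c): the paper factors out the full common factor, i.e.\ $h=x_j-1$, so that $f_2=g_2=0$ and $S(f,g)^*=0$ uniformly in every case, whereas your choice $h=x_j$ leaves $g_2\neq 0$ and requires the extra (correct) computation $f_2g_1-f_1g_2=g\rightarrow_{\{f,g\}}0$.
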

\begin{proof}
  By Theorem~\ref{th:crit}, set $G$, as defined in~\eqref{eq:gb_cover}, is a \GB basis for $\Ideal{\F}$ if and only if $S(f,g)\rightarrow_G 0$ for all distinct $f,g\in G$. The latter follows by using Lemma~\ref{th:interlacing-lemma} with $g_2=f_2=0$, $f=h\cdot f_1$, $g=h\cdot g_1$  and $h$ is the common factor $\prod_i(1-x_i)$ for $f$ and $g$ (possibly equal to 1).
\end{proof}

\section{Ternary \Majority\ Operation}\label{sect:majority}
For Boolean languages there is only one $\Majority$ operation: $\Majority(x,y,z)$ is equal to $y$ if $y=z$, otherwise it is equal to $x$. It is known (see e.g.~\cite{Jeavons:1997}) that $\Majority$ closed Boolean relations of arbitrary arity are the relations definable by a formula in conjunctive normal form in which each conjunct contains at most two literals (also known as {\sc 2-Sat}).

It follows that any instance $\Cc=(\{x_1,\ldots,x_n\},\{0,1\},C)$ of $\CSP(\Gamma)$ (see Definition~\ref{def:csp}) whose polymorphism clone (see Definition~\ref{def:polymorph}) is closed under $\Majority$ can be easily and efficiently mapped to a set $F$ of polynomials of degree at most 2 such that: $\I_C=\GIdeal{F}$ and $Sol(\Cc)=\Variety{\I_\Cc}$ (see Section~\ref{sect:idealCSP}). Moreover, $\mathcal{B} \subseteq F\subseteq \mathcal{B}\cup \mathcal{Q}\cup \mathcal{L}\cup \mathcal{Z}$ where
%
%
\begin{align*}
\mathcal{B}&=\{\pm(x_i^2-x_i)\mid i\in[n]\}, \quad \textsc{*Boolean*}\\
\mathcal{Q}&=\{\pm(x_i-\alpha)(x_j-\beta)\mid i,j\in[n], i\not=j, \alpha,\beta\in\{0,1\}\}, \quad \textsc{*Quadratic*}\\
\mathcal{L}&=\{\pm((\delta-\beta)x_i+(\gamma-\alpha)x_j+\alpha \beta-\gamma \delta) \mid i,j\in[n], i\not=j, \alpha,\beta,\gamma,\delta\in\{0,1\} \}, \  \textsc{*Linear*}\\
\mathcal{Z}&=\{\pm 1\}. \quad \textsc{*degree Zero*}
\end{align*}
By the weak Nullstellensatz (see Theorem~\ref{th:nullstz}), if $\mathcal{Z}\subseteq \I_\Cc$ then $\Cc$ is unsatisfiable. Moreover,
depending on the values of $\alpha,\beta,\gamma,\delta\in\{0,1\}$, note that for any $\ell\in \mathcal{L}$  we have that $\ell=0$ is equivalent to one of the following alternatives: $x_i+x_j=1$, $x_i=x_j$, $x_i=1$, $x_i=0$, $x_j=1$, $x_j=0$ or the zero polynomial.
It is easy to verify that set $\mathcal{B}\cup \mathcal{Q}\cup \mathcal{L}\cup \mathcal{Z}$ is 2-terms structured (see Definition~\ref{def:2terms}) with bivariate polynomials having degree at most 2. This set $F$ of 2-terms structured polynomials will be the input of Buchberger's Algorithm~\ref{Algo:buchbergerAlgo}.

The following lemma shows that set $\mathcal{B}\cup \mathcal{Q}\cup \mathcal{L}\cup \mathcal{Z}$ is closed under the multi-linearized polynomial division, namely for any $f,g\in\mathcal{B}\cup \mathcal{Q}\cup \mathcal{L}\cup \mathcal{Z}$ the remainder of the division of $f$ by $g$ and $\mathcal{B}$ is still in $\mathcal{B}\cup \mathcal{Q}\cup \mathcal{L}\cup \mathcal{Z}$ (recall that we are assuming grlex order).
\begin{lemma}\label{th:2satclosure}
For any $f,g\in\mathcal{B}\cup \mathcal{Q}\cup \mathcal{L}\cup \mathcal{Z}$ we have $f|_{\{g\}\cup \mathcal{B}} \in \mathcal{B}\cup \mathcal{Q}\cup \mathcal{L}\cup \mathcal{Z}$ and we say that set $\mathcal{B}\cup \mathcal{Q}\cup \mathcal{L}\cup \mathcal{Z}$ is \emph{closed under the multi-linearized polynomial division}.
\end{lemma}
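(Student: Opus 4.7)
The plan is a case analysis on the types of $f$ and $g$ within $\mathcal{B}\cup\mathcal{Q}\cup\mathcal{L}\cup\mathcal{Z}$, exploiting three structural facts: every polynomial here has degree at most $2$; every polynomial involves at most two distinct variables; and every leading coefficient is $\pm 1$. Under grlex the leading monomials stratify cleanly as $\pm x_k^2$ (for $\mathcal{B}$), $\pm x_ix_j$ (for $\mathcal{Q}$), a single $\pm x_i$ or $\pm x_j$ (for nonzero elements of $\mathcal{L}$), and $\pm 1$ (for $\mathcal{Z}$). Moreover every element of $\mathcal{B}\cup\mathcal{Q}\cup\mathcal{L}\cup\mathcal{Z}$ outside $\mathcal{B}$ is multilinear, so the $x_k^2-x_k$ divisors only fire when a $x_k^2$ term is transiently introduced during reduction.

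I would first dispatch the easy cases. If $f\in\mathcal{Z}$, then $f=\pm 1$ and no $g$ of positive degree divides it, so either $r=f\in\mathcal{Z}$ or $g\in\mathcal{Z}$ produces $r=0\in\mathcal{L}$. If $f\in\mathcal{B}$, then $f$ itself lies in the divisor set and $r=0$. If $f\in\mathcal{L}$ and $g\in\mathcal{B}\cup\mathcal{Q}$, then $\LT(g)$ has degree $2$ and divides no term of the multilinear, degree-$\le 1$ polynomial $f$, so $r=f$. The substantive cases are $f\in\mathcal{Q}$ and the same-degree interaction $f,g\in\mathcal{L}$. For $f,g\in\mathcal{Q}$ sharing leading monomial $x_ix_j$, I would compute $f-(\LC(f)/\LC(g))g$ explicitly and match the resulting coefficient triple $(\beta_2-\beta_1,\,\alpha_2-\alpha_1,\,\alpha_1\beta_1-\alpha_2\beta_2)$ against the $\mathcal{L}$-template $((\delta-\beta),(\gamma-\alpha),(\alpha\beta-\gamma\delta))$ by the substitution $\delta=\beta_2,\beta=\beta_1,\gamma=\alpha_2,\alpha=\alpha_1$; this is a bijection that covers all sixteen parameter choices. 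For $f\in\mathcal{Q}$ and $g\in\mathcal{L}$ with $\LT(g)=\pm x_k$ and $k\in\{i,j\}$, eliminating the $x_ix_j$ term introduces an $x_k^2$ term from the other part of $g$; grlex forces this to be cleared by the matching element of $\mathcal{B}$ before any lower-degree term of the running polynomial is touched, so the intermediate polynomial returns to multilinear form, and the resulting degree-$\le 1$ remainder can be checked by direct inspection to lie in $\mathcal{L}\cup\mathcal{Z}$.

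The main obstacle I foresee is the case $f,g\in\mathcal{L}$ with matching leading variable $x_k$, where the reduction produces $f-(\LC(f)/\LC(g))g$, a polynomial of degree at most $1$ in at most two variables. The danger here is that coefficients outside $\{-1,0,1\}$ could in principle appear after cancellation, so the closure claim reduces to verifying, pairwise, that the cancellation between any two of the six shapes of nonzero $\mathcal{L}$ elements (namely $\pm x_i$, $\pm(x_i-1)$, $\pm(x_i+x_j)$, $\pm(x_i-x_j)$, $\pm(x_i+x_j-1)$, and the zero polynomial) still lands in $\mathcal{L}\cup\mathcal{Z}$. This is a finite but tedious enumeration of pairs sharing a leading variable; it is the part of the argument I expect to require the most care, since a single missed combination would break the closure property on which the 2-terms structure of the reduced Gr\"obner basis in Section~\ref{sect:majority} depends. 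My strategy would be to organize this enumeration by fixing the shape of $g$, reading off $\LC(f)/\LC(g)\in\{\pm 1\}$, and listing which $f$-shapes share $\LT(g)$'s variable, then reading off the resulting two-term polynomial directly.
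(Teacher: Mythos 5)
Your overall route is the same as the paper's: a case analysis on the types of $f$ and $g$, with the non-dividing cases dismissed first and the case $f,g\in\mathcal{Q}$ with common leading monomial handled by the explicit subtraction and template match $\delta=\beta_2,\beta=\beta_1,\gamma=\alpha_2,\alpha=\alpha_1$, exactly as in the paper. However, two of your substantive cases are not actually established. For $f\in\mathcal{Q}$, $g\in\mathcal{L}$ you assert that eliminating the $x_ix_j$ term always creates a square which $\mathcal{B}$ then clears, leaving a degree-$\le 1$ remainder in $\mathcal{L}\cup\mathcal{Z}$. That only happens when the second variable of $g$ coincides with the second variable of $f$. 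In the generic subcase ($g=x_i+(a-b)x_k-a$ with $k\notin\{i,j\}$, or $g$ univariate) no square appears and the remainder is $((b-a)x_k-(\alpha-a))(x_j-\beta)$, a genuine quadratic that has to be recognized as an element of $\mathcal{Q}$ (degenerating into $\mathcal{L}$ or $0$ only in special parameter choices); your stated conclusion is false there, though the repair is the same inspection the paper performs.

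The more serious gap is precisely the case you flag as the main obstacle, $f,g\in\mathcal{L}$ with a common leading variable: you reduce it to a finite enumeration but never carry it out, and the catalogue you would enumerate over is wrong, since $\pm(x_i+x_j)$ does not match the $\mathcal{L}$ template (the nonzero shapes are $\pm x_i$, $\pm(x_i-1)$, $\pm(x_i-x_j)$, $\pm(x_i+x_j-1)$, up to renaming the variables). Moreover the enumeration is not the routine check you hope for: taking $f=x_i-x_j$ and $g=x_i+x_j-1$, both in $\mathcal{L}$, a single reduction step gives $f|_{\{g\}\cup\mathcal{B}}=f-g=1-2x_j$, which is not further reducible by $\{g\}\cup\mathcal{B}$ and whose coefficient $-2$ fits no template in $\mathcal{B}\cup\mathcal{Q}\cup\mathcal{L}\cup\mathcal{Z}$. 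This is exactly the ``single missed combination'' you warn about, and it cannot be settled by inspection alone; it needs an additional observation, for instance that such a pair admits no common $0/1$ solution in the two variables, so that together with the Boolean polynomials it already generates $1$ (indeed $(1-2x_j)^2-4(x_j^2-x_j)=1$), and the configuration must be argued separately rather than absorbed into the closure claim. For comparison, the paper's treatment of this case is the one-line assertion that eliminating the leading variable of $f$ via the linear equation $g=0$ yields a polynomial in $\mathcal{L}\cup\mathcal{Z}$; it does not display the enumeration either, so as written your plan does not deliver more than that assertion and leaves the lemma unproved at its only delicate point.
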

\begin{proof}
We will assume that $f,g\not\in \mathcal{Z}$ otherwise the claim is trivially true.
Then, the only interesting cases are when (a) $f\not \in \{g\}\cup \mathcal{B}$ (otherwise the remainder is zero) and (b) $f$ is divisible by $g$ (otherwise $f|_{\{g\}\cup \mathcal{B}}=f$ and the claim follows by the assumption).
It follows that $f\not\in \mathcal{B}$ and when $f\in \mathcal{L}$ then $g\not\in \mathcal{Q}$ (otherwise $f$ is not divisible by $g$ according to grlex order). Assuming (a) and (b), we distinguish between the following cases. We will assume w.l.o.g. that $f,g$ have been multiplied by appropriate constant to make $\LC(f)=\LC(g)=1$.
\begin{enumerate}
  \item $(f,g\in \mathcal{L})$. Then $f|_{\{g\}\cup \mathcal{B}}= {f|_{\{g\}}}$ and by (b) we have $\LM(f)=\LM(g)$, i.e. they have the same leading variable. It follows that ${f|_{\{g\}}}$ can be obtained from $f$ by eliminating the leading variable $\LM(f)$ according to the linear equation $g=0$. The resulting polynomial is in $\mathcal{L}\cup \mathcal{Z}$.
  \item $(f\in \mathcal{Q}\wedge g\in \mathcal{L})$. Hence, w.l.o.g., $f=(x_i-\alpha)(x_j-\beta)$ for some $i,j\in[n], i\not=j, \alpha,\beta\in\{0,1\}$ and $g=x_i+(a-b)x_k-a$ for some $a,b\in \{0,1\}$ with $x_i=\LM(g)$. Then $f|_{\{g\}} = ((b-a)x_k-(\alpha -a))(x_j-\beta)$. If $k\not =j$ then $f|_{\{g\}}=f|_{\{g\}\cup \mathcal{B}}$ and by simple inspection $f|_{\{g\}}\in \mathcal{Q}\cup \mathcal{L}$. Otherwise ($k=j$), we have $f|_{\{g\}\cup \mathcal{B}}\in \mathcal{B}\cup \mathcal{L}$.
  \item $(f,g\in \mathcal{Q})$. Then, let $f=(x_i-\alpha)(x_j-\beta)$ and $g=(x_i-\gamma)(x_j-\delta)$, for some $i,j\in[n], i\not=j,\alpha,\beta,\gamma,\delta\in \{0,1\}$
     (note that by (b) we are assuming $\LM(f)=\LM(g)$, so they have the same variables). It follows that $f|_{\{g\}\cup \mathcal{B}}=f|_{\{g\}}=f-g= (\delta-\beta)x_i+(\gamma-\alpha)x_j+\alpha \beta-\gamma \delta\in \mathcal{L}$.
\end{enumerate}
\end{proof}

%
The next lemma shows that set $\mathcal{B}\cup \mathcal{Q}\cup \mathcal{L}\cup \mathcal{Z}$ is also closed under another important operation, namely the multi-linearized $S(f,g)^*$-polynomial (see~\eqref{eq:intnormal}). Note that the multi-linearized version of $S(f,g)^*$ is equal to $S(f,g)^*|_{\mathcal{B}}$. This operation will be crucially employed within Buchberger's Algorithm~\ref{Algo:buchbergerAlgo} to get the claimed results.
\begin{lemma}\label{th:GB2sat}
For any $f,g\in\mathcal{B}\cup \mathcal{Q}\cup \mathcal{L}\cup \mathcal{Z}$ we have $S(f,g)^*|_{\mathcal{B}} \in \mathcal{B}\cup \mathcal{Q}\cup \mathcal{L}\cup \mathcal{Z}$, namely set $\mathcal{B}\cup \mathcal{Q}\cup \mathcal{L}\cup \mathcal{Z}$ is closed under $S(f,g)^*|_{\mathcal{B}}$-polynomial composition.
\end{lemma}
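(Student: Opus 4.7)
The plan is to carry out a case analysis on the types of $f$ and $g$, closely mirroring the structure of the proof of Lemma~\ref{th:2satclosure}. First I would dispatch two degenerate cases: (i) if either of $f, g$ lies in $\mathcal{Z}=\{\pm 1\}$, then $\LT(\pm 1)=\pm 1$ divides every monomial, so $(f_2 g_1 - f_1 g_2)|_{\{f,g\}}=0$ and hence $S(f,g)^*=0$; and (ii) if $\gcd(\LM(f),\LM(g))=1$, the natural choice $h=1$, $f_1=f$, $g_1=g$, $f_2=g_2=0$ satisfies the hypotheses of the Interlacing Lemma~\ref{th:interlacing-lemma} and gives $f_2 g_1 - f_1 g_2 = 0$ directly, so again $S(f,g)^*=0$. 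In both degenerate cases $S(f,g)^*|_{\mathcal{B}} = 0$ trivially.

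For the remaining cases, $f,g\in\mathcal{B}\cup\mathcal{Q}\cup\mathcal{L}$ with $\gcd(\LM(f),\LM(g))\ne 1$, I would take $h$ to be the monic gcd of $\LT(f)$ and $\LT(g)$ and split $f = h f_1 + f_2$, $g = h g_1 + g_2$ so that Lemma~\ref{th:interlacing-lemma}'s hypotheses are met: extracting the full shared head makes $\LM(f_1)$ and $\LM(g_1)$ coprime (each is either a unit or a single residual variable), and under grlex the tails $f_2, g_2$ are strictly smaller than $h f_1, h g_1$. Because every polynomial in $\mathcal{B}\cup\mathcal{Q}\cup\mathcal{L}$ has total degree at most $2$ with $\{-1,0,1\}$ coefficients, each of $f_1, f_2, g_1, g_2$ is either a scalar in $\{-1,0,1\}$ or a simple affine form $\pm(x_k - \gamma)$ with $\gamma\in\{0,1\}$; hence $f_2 g_1 - f_1 g_2$ is an explicit degree-$\le 2$ polynomial in at most four variables that I can compute by direct expansion. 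Iterating Lemma~\ref{th:2satclosure} to reduce this expression modulo $\{f,g\}\cup\mathcal{B}$ preserves membership in $\mathcal{B}\cup\mathcal{Q}\cup\mathcal{L}\cup\mathcal{Z}$ at every step, so the final normal form $S(f,g)^*|_{\mathcal{B}}$ lies in the claimed set.

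I expect the main obstacle to be the subcase $f, g \in \mathcal{Q}$ sharing \emph{both} of their variables, so that $\LM(f)=\LM(g)=x_i x_j$ and $h=x_i x_j$. Here $f_1 = g_1 = \pm 1$ while the tails $f_2, g_2$ each carry three monomials parameterized by $(\alpha,\beta),(\gamma,\delta)\in\{0,1\}^2$, so one must check across all sixteen parameter combinations that the multi-linearized $f_2 g_1 - f_1 g_2$ lands in $\mathcal{L}\cup\mathcal{Z}$. A related but milder subtlety is verifying the strict monomial-order inequalities $\LM(hf_1) > \LM(f_2)$ and $\LM(hg_1) > \LM(g_2)$ required by Lemma~\ref{th:interlacing-lemma}; these follow because once the monic gcd is peeled off a degree-$\le 2$ input, the remaining tail is either zero or has strictly lower total degree than $hf_1$ under grlex, so the conditions of the Interlacing Lemma hold without extra bookkeeping in each case.
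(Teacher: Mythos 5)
Your proposal is correct and reaches the same conclusion by the same overall strategy — kill the coprime-leading-monomial case with Lemma~\ref{th:interlacing-lemma}, then run a case analysis whose cross term $f_2g_1-f_1g_2$ is handed to the division-closure Lemma~\ref{th:2satclosure} — but your decomposition is genuinely different from the paper's. The paper never takes $h$ to be the monomial $\LCM$/gcd of leading terms: it chooses $h$ to be a common \emph{affine} factor, e.g.\ $h=(x_i-\alpha)$ when $f=(x_i-\alpha)(x_j-\beta)$ and $g=(x_i-\alpha)(x_k-\gamma)$ (so $S(f,g)^*=0$ outright), and in the ``disagreeing'' case it even rewrites $f=x_i(x_j-\beta)$ as $(x_i-1)(x_j-\beta)+(x_j-\beta)$ so that $h=(x_i-1)$ matches the factor of $g$; with these tailored splits the cross term is immediately an explicit member of $\mathcal{Q}\cup\mathcal{L}\cup\{0\}$ and essentially no reduction modulo $\{f,g\}$ is needed. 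Your uniform choice, $h$ equal to the monic gcd of $\LT(f)$ and $\LT(g)$, also satisfies hypotheses \eqref{eq:int1}--\eqref{eq:int3} of Lemma~\ref{th:interlacing-lemma} in every non-degenerate case, and the resulting cross terms do land (possibly after multilinearization) in $\mathcal{B}\cup\mathcal{Q}\cup\mathcal{L}\cup\mathcal{Z}$, so your route works; what it buys is uniformity and a purely mechanical verification, at the cost of pushing real work into the reduction modulo $\{f,g\}\cup\mathcal{B}$ (e.g.\ for $f\in\mathcal{B}$, $g\in\mathcal{Q}$ or $g\in\mathcal{L}$ the cross term is divisible by $\LT(f)$ or $\LT(g)$ and only reduces to an element of the set, often $0$, after that step), where you must — and can — check that the intermediate cross term is itself in the set before iterating Lemma~\ref{th:2satclosure}. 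Two small corrections: the subcase you single out as the main obstacle ($f,g\in\mathcal{Q}$ with $\LM(f)=\LM(g)$) is in fact the easiest under your split, since $f_1=g_1=1$ gives $f_2g_1-f_1g_2=f-g=(\delta-\beta)x_i+(\gamma-\alpha)x_j+\alpha\beta-\gamma\delta$, which is literally the defining template of $\mathcal{L}$, so no sixteen-case enumeration is needed (this is exactly the paper's last case); and your justification of \eqref{eq:int1}--\eqref{eq:int2} via ``strictly lower total degree'' is imprecise when $f\in\mathcal{L}$, where the tail can also have degree $1$ — the needed inequality still holds because $h f_1$ carries $\LT(f)$, so $\LM(hf_1)=\LM(f)>\LM(f_2)$ under grlex.
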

\begin{proof}
For simplicity, we assume w.l.o.g. that $f,g$ have been multiplied by appropriate constant to make $\LC(f)=\LC(g)=1$.
For any given $f,g\in\mathcal{B}\cup \mathcal{Q}\cup \mathcal{L}\cup \mathcal{Z}$  we distinguish between the following complementary cases (we assume non zero polynomials with $f\not = g$ otherwise $S(f,g)=S(f,g)^*=0$):
\begin{enumerate}
\item If $\LCM(\LM(f),\LM(g))=\LM(f)\cdot \LM(g)$ then we have $S(f,g)^*=0$ (by Lemma~\ref{th:interlacing-lemma} with $f_1=f$ and $g_1=g$).\label{en:0}
\item Else if $g\in \mathcal{L}$, ($f\in \mathcal{L}$ is symmetric) then we claim that $S(f,g)^*=f|_{\{g\}}$, hence $S(f,g)^*|_{\mathcal{B}}=f|_{\{g\}\cup\mathcal{B}}$, and the claim follows by Lemma~\ref{th:2satclosure}.

    Indeed, if both $f,g\in \mathcal{L}$ then by the assumptions (recall we are not in Case~\ref{en:0}) we have $S(f,g)=f-g= {f|_{\{g\}}}$. Note that $S(f,g)$ is not divisible by $f,g$ which implies $S(f,g)^*=S(f,g)$.

    Otherwise, assume $f\in \mathcal{B}\cup \mathcal{Q}$ (and $g\in \mathcal{L}$).
    Then, recall that we are assuming that $\LC(g)=1$ and $f$ is divisible by $g$, otherwise we are in Case~\ref{en:0}, which implies that $\LM(g)$ is a variable, say $x_i$, that appears in $f$ as well. Then, w.l.o.g., we can write $f,g$ as follows (the labels over the different parts of $f,g$ will be used while applying Lemma~\ref{th:interlacing-lemma}):
    \begin{align*}
      f&=\overbrace{(x_i-\alpha)}^h\overbrace{(x_k-\beta)}^{f_1}, \text{ for some } k\in[n]  \text{ and }  \alpha,\beta\in\{0,1\}; \\
      g&= \overbrace{(x_i-\alpha)}^h + \overbrace{\alpha+(\gamma-\delta)x_j -\gamma}^{g_2}, \text{ for some }j\in[n]\setminus \{i\} \text{ and } \gamma,\delta\in\{0,1\}.
    \end{align*}
By applying Lemma~\ref{th:interlacing-lemma} (with $h,f_1,g_2$ as above and $f_2=0$ and $g_1=1$) we have
    $$S(f,g)^*=-f_1g_2|_{\{f,g\}}=f|_{\{g\}},$$
where the latter follows by noting that $f|_{\{g\}}=-f_1g_2$ and therefore not divisible neither by $f$ nor by $g$.

\item Else if $f,g\in \mathcal{B}\cup\mathcal{Q}$ and $\LM(f),\LM(g)$ share exactly one variable, i.e. $\LM(f)=x_i\cdot x_j$ and $\LM(g)=x_i\cdot x_k$ for some $i,j,k\in [n]$ with $j\not=k$ (but we can have $j=i$ xor $k=i$).
    We distinguish between the following complementary subcases:
\begin{itemize}
\item $f,g\in \mathcal{B}\cup\mathcal{Q}$ and they ``agree'' on the shared variable, i.e.
    \begin{align*}
      f&=\overbrace{(x_i-\alpha)}^{h}\overbrace{(x_j-\beta)}^{f_1}, \\
      g&=\overbrace{(x_i-\alpha)}^{h}\overbrace{(x_k-\gamma)}^{g_1},
    \end{align*}
    for some $\alpha,\beta,\gamma\in \{0,1\}$ (and $k\not=j$). In this case by applying Lemma~\ref{th:interlacing-lemma} (with $h,f_1,g_1$ as above and $g_2=f_2=0$), we have $S(f, g)^*= 0$.
\item Otherwise, assume $f=x_i(x_j-\beta)$ and $g=(x_i-1)(x_k-\gamma)$, where $\beta,\gamma\in \{0,1\}, k\not=j$  (remaining cases are symmetric). Apply Lemma~\ref{th:interlacing-lemma} with
    \begin{align*}
      f&=\overbrace{(x_i-1)}^{h}\overbrace{(x_j-\beta)}^{f_1}+\overbrace{(x_j-\beta)}^{f_2}, \\
      g&=\overbrace{(x_i-1)}^{h}\overbrace{(x_k-\gamma)}^{g_1}.
    \end{align*}
    It follows that $S(f,g)^*=(x_j-\beta)(x_k-\gamma)\in \mathcal{Q}$ and $S(f,g)^*=S(f,g)^*|_{\mathcal{B}}$.
\end{itemize}

\item Else if $f,g\in \mathcal{B}\cup\mathcal{Q}$ and $\LM(g)=\LM(f)$:
in this case $f$ and $g$ have the same variables $x_i,x_j$, for some $i,j\in[n], i\not=j$ and $f,g\in \mathcal{Q}$ (the latter because we are assuming $f\not=g$ and $\LM(g)=\LM(f)$, so it cannot happen that $f\in \mathcal{B}$ or $g\in \mathcal{B}$). Then $f=(x_i-\alpha)(x_j-\beta)$ and $g=(x_i-\gamma)(x_j-\delta)$, for some $\alpha,\beta,\gamma,\delta\in \{0,1\}$. Note that $S(f,g)=f-g= (\delta-\beta)x_i+(\gamma-\alpha)x_j+\alpha \beta-\gamma \delta$.
$S(f,g)$ is not divisible by $f,g$ which implies $S(f,g)^*=S(f,g)$ and therefore $S(f,g)^*\in \mathcal{L}$  and $S(f,g)^*=S(f,g)^*|_{\mathcal{B}}$.
\end{enumerate}
\end{proof}
\begin{lemma}
For any given $\emph{\CSP}(\Gamma)$ instance $\Cc$, if $\Majority\in\Pol(\Gamma)$ then the reduced \GB basis for the combinatorial ideal $\I_\Cc$ is computable in $n^{O(1)}$ time and it is 2-terms structured.
\end{lemma}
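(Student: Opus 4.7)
The plan is to run Buchberger's Algorithm~\ref{Algo:buchbergerAlgo} starting from the 2-terms generating set $F \subseteq \mathcal{B} \cup \mathcal{Q} \cup \mathcal{L} \cup \mathcal{Z}$ of $\I_\Cc$ (which exists and is efficiently computable from $\Cc$ by the discussion preceding Lemma~\ref{th:2satclosure}), but with line~\ref{eq:reminder} modified to use the specific normal form $h := S(f,g)^*|_{G}$ from the Interlacing Lemma~\ref{th:interlacing-lemma}. The Interlacing Lemma guarantees $S(f,g) \to_{\{f,g\}} S(f,g)^*$, so $S(f,g)^*|_G$ is a legitimate normal form of $S(f,g)$ modulo $G$, and hence the modified algorithm still produces a valid \GB basis by Buchberger's Criterion (Theorem~\ref{th:crit}).

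First I would show by induction on the iterations of the algorithm that the running basis $G$ remains inside $\mathcal{B} \cup \mathcal{Q} \cup \mathcal{L} \cup \mathcal{Z}$. The base case holds since $F \subseteq \mathcal{B} \cup \mathcal{Q} \cup \mathcal{L} \cup \mathcal{Z}$. For the inductive step, any new polynomial added at line~\ref{eq:addspoly} has the form $S(f,g)^*|_G$ for some $f,g \in G$. Because $\mathcal{B} \subseteq F \subseteq G$ throughout the execution, the reduction by $G$ includes reduction by $\mathcal{B}$; by Lemma~\ref{th:GB2sat}, $S(f,g)^*|_{\mathcal{B}} \in \mathcal{B} \cup \mathcal{Q} \cup \mathcal{L} \cup \mathcal{Z}$, and by iterated application of Lemma~\ref{th:2satclosure} the further reductions by the rest of $G$ keep the remainder inside $\mathcal{B} \cup \mathcal{Q} \cup \mathcal{L} \cup \mathcal{Z}$. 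Hence the invariant is preserved.

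Next I would bound the running time. Up to the $\pm$ sign and leading-coefficient rescaling, the set $\mathcal{B} \cup \mathcal{Q} \cup \mathcal{L} \cup \mathcal{Z}$ has only $O(n^2)$ distinct polynomials (there are $O(n^2)$ choices of indices $i,j$ and $O(1)$ choices of the Boolean parameters $\alpha,\beta,\gamma,\delta$). Since each newly inserted polynomial into $G$ must be non-zero modulo $G$ and also belongs to this bounded set, at most $O(n^2)$ insertions can occur. The set $C$ of critical pairs therefore grows to at most $O(n^4)$ pairs, and each $S$-polynomial computation and reduction takes $\text{poly}(n)$ time because the operands are 2-terms polynomials whose products expand to at most four monomials and multilinearization by $\mathcal{B}$ is cheap. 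So Buchberger's Algorithm terminates in $n^{O(1)}$ time and outputs a 2-terms structured \GB basis.

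Finally, to obtain the \emph{reduced} \GB basis, I would apply the standard post-processing (see \cite[p.~91--92]{Cox:2015}): drop any $g \in G$ whose leading term is divisible by $\LT(g')$ for some other $g' \in G$, and replace each surviving element $g$ by its normal form modulo $G \setminus \{g\}$. The first step only removes elements; the second step is again a repeated multi-linearized division, so by Lemma~\ref{th:2satclosure} the result stays in $\mathcal{B} \cup \mathcal{Q} \cup \mathcal{L} \cup \mathcal{Z}$, hence remains 2-terms structured. The main subtlety, and what Lemmas~\ref{th:2satclosure} and \ref{th:GB2sat} together with the Interlacing Lemma are explicitly designed to handle, is that a generic choice of normal form for $S(f,g)$ could produce a polynomial outside $\mathcal{B} \cup \mathcal{Q} \cup \mathcal{L} \cup \mathcal{Z}$; using $S(f,g)^*|_G$ is what makes the invariant hold and, as already noted in the paper, this normal form coincides with some valid $S(f,g)|_G$, so correctness of Buchberger's Algorithm is unaffected.
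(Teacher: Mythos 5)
Your proposal is correct and follows essentially the same route as the paper: run Buchberger's algorithm with the special normal form $S(f,g)^*$ reduced modulo $\mathcal{B}$ and then $G$, use Lemmas~\ref{th:2satclosure} and~\ref{th:GB2sat} to keep the running basis inside $\mathcal{B}\cup\mathcal{Q}\cup\mathcal{L}\cup\mathcal{Z}$, bound the number of insertions by the $O(n^2)$ size of that set, and finish by noting that inter-reduction to the reduced basis preserves the 2-terms structure. Your accounting of the critical-pair set and of the reduced-basis post-processing is slightly more explicit than the paper's, but the argument is the same.
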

\begin{proof}
We compute a \GB basis  $G$ for $\I_\Cc$ by using Buchberger's Algorithm~\ref{Algo:buchbergerAlgo} with the following change: at line~\ref{eq:reminder} of Algorithm~\ref{Algo:buchbergerAlgo}, replace $S(f,g)$ with $S(f,g)^*|_{\mathcal{B}}$ (see~\eqref{eq:intnormal}) and then reduce it modulo $G$, i.e. divide $S(f,g)^*|_{\mathcal{B}}$ by $G$ in any order and return the remainder that we denote by $S(f,g)^*|_{G}$. Note that $S(f,g)^*|_{G}$ is a normal form of $S(f,g)$ by $G$, namely there is an ordering of the polynomials division that make $S(f,g)^*|_{G}=S(f,g)|_G$.
 Therefore Algorithm~\ref{Algo:buchbergerAlgo} with the above specified changes returns a \GB basis, since Buchberger's Algorithm is guaranteed to return a \GB basis independently on the order in which we perform polynomial divisions at line~\ref{eq:reminder}. Moreover, we claim that the returned \GB basis will be a subset of $\mathcal{B}\cup \mathcal{Q}\cup \mathcal{L}\cup \mathcal{Z}$.

At the beginning of the Buchberger's Algorithm, line~\ref{eq:startingG} of Algorithm~\ref{Algo:buchbergerAlgo}, we have $G=F\subseteq\mathcal{B}\cup \mathcal{Q}\cup \mathcal{L}\cup \mathcal{Z}$, where $F$ is the set of polynomials defined at the beginning of this section.
Lemma~\ref{th:2satclosure} and Lemma~\ref{th:GB2sat} show that $\mathcal{B}\cup \mathcal{Q}\cup \mathcal{L}\cup \mathcal{Z}$ is closed under Boolean polynomial division and under $S(f,g)^*|_{\mathcal{B}}$-polynomial composition for any $f,g\in \mathcal{B}\cup \mathcal{Q}\cup \mathcal{L}\cup \mathcal{Z}$.
It follows that the condition at line~\ref{eq:Buch_cond} of Algorithm~\ref{Algo:buchbergerAlgo} (i.e. $S(f,g)^*|_G\not = 0$) is satisfied at most $O(n^2)$ times, since $|\mathcal{B}\cup \mathcal{Q}\cup \mathcal{L}\cup \mathcal{Z}|=O(n^2)$. Therefore, after at most $O(n^2)$ many times condition at line~\ref{eq:Buch_cond} of Algorithm~\ref{Algo:buchbergerAlgo} is satisfied, we have $S(f,g)\rightarrow_G 0$ for any $f,g\in G$. By Theorem~\ref{th:crit}, this implies that a \GB basis for $\Majority$ closed languages can be computed in polynomial time.

Finally, for any fixed monomial ordering the (unique) reduced \GB basis can be obtained from a non reduced one $G$ by repeatedly dividing each element $g\in G$ by $G\setminus \{g\}$. Lemma~\ref{th:2satclosure} implies that it is 2-terms structured.
\end{proof}

\section{Binary \Max\  and \Min\  Operations}\label{sect:min_max}
For Boolean languages there are only two idempotent binary operations (which are not projections) corresponding to the $\Max$ operation (logical \emph{OR}) and the $\Min$ operation (logical \emph{AND}).

It is known (see e.g.~\cite{Jeavons:1997} and the references therein) that a Boolean relation is closed under $\Max$ operation if and only if can be defined by a conjunction of clauses each of which contains at most one negated literal (also known as \emph{dual-Horn} clauses).
%
It follows that any instance $\Cc=(\{x_1,\dots,x_n\},\{0,1\},C)$ of $\CSP(\Gamma)$ whose polymorphism clone is closed under the $\Max$ operation can be mapped to a set $F$ of 2-terms polynomials (see Definition~\ref{def:2terms}) such that: $\I_C=\GIdeal{F}$, $Sol(\Cc)=\Variety{\I_\Cc}$ (see Section~\ref{sect:idealCSP}) and $F\subseteq \T^-$. Indeed, every clause with variables in $S\subseteq [n]$ and at most one negated literal can be represented by the following system of (negative) 2-terms polynomials equalities:
\begin{align*}
 &x_i^2-x_i=0, &\qquad i\in S;\\
  &\prod_{j\in S} (x_j-1)+\alpha\prod_{j\in S\setminus\{i\}} (x_j-1)=0, &\qquad \text{for some }\alpha \in \{0, 1\}, i\in S. 
\end{align*}

Similarly, a Boolean relation is closed under $\Min$ operation if and only if can be defined by a conjunction of clauses each of which contains at most one unnegated literal (also known as \emph{Horn} clauses). Any instance $\Cc=(X,\{0,1\},C)$ of $\CSP(\Gamma)$ whose polymorphism clone is closed under the $\Min$ operation can be mapped to an equivalent set $F\subseteq\T^+$ of 2-terms polynomials.

The following lemma shows that set $\T^-$ (or $\T^+$) is closed under polynomial division, namely for any $f,g\in \T^-$ (or $\in\T^+$) the remainder of the division of $f$ by $g$ is still in $\T^-$ (or $\T^+$) (recall that we are assuming grlex order).
\begin{lemma}\label{th:Max_Mindivclosure}
For any $f,g\in\T^-$ (or in $\T^+$) we have $f|_{\{g\}} \in \T^-$ (or in $\T^+$) and we say that set $\T^-$ (or $\T^+$) is \emph{closed under polynomial division}.
\end{lemma}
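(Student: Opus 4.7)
The plan is to prove the $\T^-$ case; the $\T^+$ case is symmetric under the ring involution $x_i \mapsto 1-x_i$, which bijects $\T^-$ with $\T^+$ and fixes the Boolean polynomials $\pm(x_i^2-x_i)$. Fix $f,g\in\T^-$ and proceed by case analysis on the shape of $g$ (and symmetrically on $f$).

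The first and easy subcase is $g = \pm(x_i^2 - x_i)$. Every element of the form $\tau^-(S_1) + \tau^-(S_2)$ is multilinear, because $\tau^-(S) = \alpha\prod_{j\in S}(x_j-1)$ is a squarefree product. Hence $\LT(g) = \pm x_i^2$ divides a term of $f$ only when $f$ is itself a Boolean polynomial $\pm(x_j^2-x_j)$, and then the division either leaves $f$ unchanged or produces $0$, both in $\T^-$. The symmetric situation $f = \pm(x_i^2-x_i)$ is handled the same way once one observes that, outside the Boolean case, $\LT(g)$ is a squarefree monomial of degree at least $1$ and so cannot divide either term of $\pm(x_i^2-x_i)$.

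In the main case, write $f = \beta_1\tau^-(S_1) + \beta_2\tau^-(S_2)$ and $g = \alpha_1\tau^-(T_1) + \alpha_2\tau^-(T_2)$, with WLOG $\LM(g) = \prod_{j\in T_1} x_j$. A monomial of $f$ divisible by $\LM(g)$ must appear in the expansion of some $\tau^-(S_i)$ and therefore has the shape $\prod_{j\in S'} x_j$ with $T_1 \subseteq S' \subseteq S_i$. I will carry out the division step-by-step, always choosing the quotient at the current step of the form $q = \pm\prod_{j\in U}(x_j - 1) = \pm\tau^-(U)$ for an appropriate $U\subseteq[n]$ disjoint from $T_1$. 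The driving algebraic fact is the identity
\[
\Big(\prod_{j\in U}(x_j-1)\Big)\cdot \tau^-(V) \;=\; \alpha\,\tau^-(U\cup V)\qquad(U\cap V = \emptyset),
\]
so that $q\cdot g$ is itself a difference of two $\tau^-$ terms and subtracting it from $f$ keeps the running polynomial as a sum of two $\tau^-$ terms.

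The main obstacle is the subcase where $U$ and $T_2$ overlap: then $q\cdot\tau^-(T_2)$ introduces a repeated factor $(x_j-1)^2$ that no longer fits the $\tau^-$ shape. This scenario is exactly ruled out by the structure of the relevant 2-terms polynomials (where, as in the dual-Horn generators of the preceding paragraphs, $T_2\subseteq T_1$ forces $U\cap T_2=\emptyset$); alternatively, the collapse $(x_j-1)^2 \equiv -(x_j-1) \pmod{x_j^2-x_j}$ absorbs any such square when the grlex normal form is carried out in the spirit of Lemma~\ref{th:2satclosure}. After iterating — each reduction strictly decreases the grlex multidegree of the polynomial being reduced — the process terminates with a polynomial in $\T^-$ no term of which is divisible by $\LT(g)$, giving the claimed normal form.
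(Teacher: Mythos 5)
Your proposal follows essentially the same route as the paper's proof: the driving fact in both is that a monomial of $\tau^-(S)$ divisible by $\LM(g)=\prod_{j\in T_1}x_j$ forces $T_1\subseteq S$, so the quotient used at each step is itself of the form $\pm\prod_{j\in U}(x_j-1)$ with $U$ disjoint from $T_1$, and the reduction keeps the polynomial a sum of two negative terms; the paper simply writes the resulting remainder in one closed formula instead of your induction on grlex-decreasing steps. The place where you hedge is, however, the one genuine subtlety, and your first way out does not work: you cannot assume $T_2\subseteq T_1$, because the lemma is asserted (and later needed, e.g.\ for remainders of $S(f,g)^*$-compositions in Lemma~\ref{th:max_2terms}) for arbitrary elements of $\T^-$, not only for the dual-Horn generators. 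Concretely, $g=(x_1-1)(x_2-1)+(x_3-1)$ and $f=(x_1-1)(x_2-1)(x_3-1)+(x_4-1)$ are both in $\T^-$, yet division of $f$ by $g$ leaves $-(x_3-1)^2+(x_4-1)$, which contains a squared factor. Your second way out, collapsing $(x_j-1)^2\equiv-(x_j-1)$ modulo $x_j^2-x_j$, is the right one, but note that it establishes closure under the multi-linearized division $f|_{\{g\}\cup\mathcal{B}}$ (in the spirit of Lemma~\ref{th:2satclosure}) rather than under $f|_{\{g\}}$ as literally stated. It is worth knowing that the paper's own proof has exactly the same wrinkle: its remainder $-\beta\prod_{i\in B_2}(x_i-1)\prod_{j\in A_1}(x_j-1)+\alpha\prod_{i\in A_2}(x_i-1)$ has repeated factors precisely when $B_2\cap A_1\neq\emptyset$, and this is harmless only because in every application the Boolean polynomials $\pm(x_i^2-x_i)$ are present in the divisor set. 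Finally, your ``easy subcase'' needs the same repair: if $g$ is non-Boolean with $\LM(g)=x_i$ (for example $g=(x_i-1)+(x_j-1)$), then $\LT(g)$ does divide the terms of $\pm(x_i^2-x_i)$, so that case is not vacuous and again is cleanest when the reduction is carried out modulo $\mathcal{B}$ as well.
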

\begin{proof}
We show the proof for set $\T^-$, the other case is symmetric.

Consider any $f,g\in \T^-$.
For simplicity, we assume w.l.o.g., that $f,g$ have been multiplied by appropriate constant to make $\LC(f)=\LC(g)=1$. Note that the only cases where $f|_{\{g\}}\not =f$ (otherwise we are done) is when
\begin{align*}
  f&=\prod_{i\in B_1}(x_i-1)\prod_{j\in A_1}(x_j-1)+\alpha \prod_{i\in A_2}(x_i-1), \\
  g&=\prod_{i\in B_1}(x_i-1)+\beta \prod_{i\in B_2}(x_i-1),
\end{align*}
for some $A_1,A_2,B_1,B_2\subseteq [n]$, $\alpha,\beta\in \{0,\pm 1\}$, $B_1\not=\emptyset$ and $\LM(f)=\prod_{i\in A_1\cup B_1} x_i$, $\LM(g)=\prod_{i\in B_1} x_i$. Then, $f|_{\{g\}}=-\beta\prod_{i\in B_2}(x_i-1)\prod_{j\in A_1}(x_j-1)+\alpha \prod_{i\in A_2}(x_i-1)$ and the claim follows.
\end{proof}

The next lemma shows that set $\T^-$ (or $\T^+$) is also closed under the $S(f,g)^*$-polynomial composition (see~\eqref{eq:intnormal}). This, by using Lemma~\ref{th:Max_Mindivclosure}, will imply that the reduced \GB basis is a subset of $\T^-$ ($\T^+$).

\begin{lemma}\label{th:max_2terms}
For any given $\emph{\CSP}(\Gamma)$ instance $\Cc$, if $\Max\in\Pol(\Gamma)$ (or $\Min\in\Pol(\Gamma)$) then the reduced \GB basis for the combinatorial ideal $\I_\Cc$ is a subset of $\T^-$ ($\T^+$).
\end{lemma}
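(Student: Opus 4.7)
The plan is to mimic the structure of the \Majority\ proof by establishing that $\T^-$ (respectively $\T^+$) is closed under the combined operation of computing $S(f,g)^*$ via the Interlacing Lemma~\ref{th:interlacing-lemma} followed by reduction modulo $\mathcal{B}$, and then invoking Lemma~\ref{th:Max_Mindivclosure} to control subsequent divisions. I focus on the \Max\ case; the \Min\ case is symmetric via the substitution $(x_i-1) \leftrightarrow x_i$ applied to every term.

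I would run Buchberger's Algorithm~\ref{Algo:buchbergerAlgo} starting from the initial set $F \subseteq \T^-$ obtained from the dual-Horn representation described just before the lemma. As in the \Majority\ proof, at line~\ref{eq:reminder} I replace $S(f,g)$ with $S(f,g)^*|_{\mathcal{B}}$ and then reduce by the current $G$; correctness is inherited from the fact that Buchberger's algorithm returns a \GB basis regardless of the ordering of polynomial divisions. The goal is then to prove by induction that every polynomial ever placed in $G$ belongs to $\T^-$.

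The core step is showing that $S(f,g)^*|_{\mathcal{B}} \in \T^-$ for all $f,g \in \T^-$. Write $f = \tau^-(F_1) + f_2$ and $g = \tau^-(G_1) + g_2$ so that $\tau^-(F_1)$ and $\tau^-(G_1)$ are the leading terms; set $C = F_1 \cap G_1$ and decompose $f = h\cdot f_1 + f_2$, $g = h\cdot g_1 + g_2$ with $h = \prod_{i \in C}(x_i - 1)$, $f_1 = \tau^-(F_1 \setminus C)$, $g_1 = \tau^-(G_1 \setminus C)$. By construction the coprimeness hypothesis $\LCM(\LM(f_1),\LM(g_1)) = \LM(f_1)\cdot \LM(g_1)$ of Lemma~\ref{th:interlacing-lemma} holds, so the Interlacing Lemma yields $S(f,g)^* \propto (f_2 \cdot g_1 - f_1 \cdot g_2)|_{\{f,g\}}$. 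Using the identity $(x_i - 1)^2 \equiv -(x_i - 1) \pmod{x_i^2 - x_i}$, the product $\tau^-(S_1)\cdot \tau^-(S_2)$ reduces modulo $\mathcal{B}$ to $\pm\tau^-(S_1 \cup S_2)$. Hence $(f_2 g_1 - f_1 g_2)|_{\mathcal{B}}$ is a sum of at most two negative terms, which places it in $\T^-$.

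Once this closure is established, Lemma~\ref{th:Max_Mindivclosure} takes care of any further reductions by members of $G$, preserving the invariant $G \subseteq \T^-$ throughout the algorithm. Termination follows from $\I_\Cc$ being zero-dimensional and radical. Finally, the reduced \GB basis is extracted from any \GB basis by dividing each element by the remaining ones, and one more application of Lemma~\ref{th:Max_Mindivclosure} shows this step preserves $\T^-$. The main obstacle I anticipate is the case analysis required in the core step: the degenerate situations $f_2 = 0$ or $g_2 = 0$, the possibility $f \in \{\pm(x_i^2 - x_i)\} \subseteq \T^-$ or $g \in \{\pm(x_i^2 - x_i)\}$ where the decomposition via $h$ is trivial, and the sign bookkeeping when several factors $(x_i - 1)$ are shared between $F_1 \cup F_2$ and $G_1 \cup G_2$, all need careful handling to guarantee that the final expression has the form $\tau^-(S_1') + \alpha \tau^-(S_2')$ with $\alpha \in \{0, \pm 1\}$ as demanded by the definition of $\T^-$.
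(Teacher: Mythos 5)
Your proposal is correct and follows essentially the same route as the paper: run Buchberger's algorithm with the normal form $S(f,g)^*$ from the Interlacing Lemma~\ref{th:interlacing-lemma} applied to the common-factor decomposition of two 2-terms polynomials, show that $\T^-$ (resp.\ $\T^+$) is closed under this operation, and invoke Lemma~\ref{th:Max_Mindivclosure} to keep the structure under all remaining divisions, including the passage to the reduced basis. Your explicit reduction modulo $\mathcal{B}$ via $(x_i-1)^2\equiv-(x_i-1)$ only makes precise a multilinearization step the paper leaves implicit; the sole slip is attributing termination to zero-dimensionality and radicality, which is harmless since Buchberger's algorithm terminates unconditionally.
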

\begin{proof}
We show the claim when $\Max\in\Pol(\Gamma)$; the other case has a similar but simpler proof, since the use of the ``standard'' $S$-polynomials suffices in the corresponding arguments below.

By Lemma~\ref{th:Max_Mindivclosure}, set $\T^-$ is closed under polynomial division.
Next we observe that for any $f,g\in\T^-$ we have $S(f,g)^* \in \T^-$, namely set $\T^-$ is closed under $S(f,g)^*$-polynomial composition (see~\eqref{eq:intnormal}).
Indeed for any $f,g\in \T^-$, if $S(f,g)^* \not= 0$ (otherwise we are done) then the claim follows by applying Lemma~\ref{th:interlacing-lemma} with $hf_1$ (of $hg_1$) be equal to the (negative) term of $f$ (of $g$) with the highest multidegree.

As already observed at the beginning of this section, any instance $\Cc$ of $\CSP(\Gamma)$ whose polymorphism clone is closed under the $\Max$ operation can be mapped to a set $F\subseteq \T^-$ such that: $\I_C=\GIdeal{F}$, $Sol(\Cc)=\Variety{\I_\Cc}$.

Consider Buchberger's algorithm (see Algorithm~\ref{Algo:buchbergerAlgo}) with set $F\subseteq \T^-$ as input and with the following change: at line~\ref{eq:reminder} of Algorithm~\ref{Algo:buchbergerAlgo}, replace $S(f,g)$ with $S(f,g)^*$ (see~\eqref{eq:intnormal}). Then, every set $G$ considered in Algorithm~\ref{Algo:buchbergerAlgo} is a subset of $\T^-$. Since Algorithm~\ref{Algo:buchbergerAlgo} is guaranteed to return a \GB basis (for any chosen normal form $S(f,g)|_G$), it follows that there exists a \GB basis $G$ that is a subset of $\T^-$.

Finally, the (unique) reduced \GB basis can be obtained from a non reduced one $G$ by repeatedly dividing each element $g\in G$ by $G\setminus \{g\}$. By Lemma~\ref{th:Max_Mindivclosure}, it follows that the unique \GB basis is  a subset of $\T^-$.
\end{proof}

\begin{remark}
  Contrary to what happened to $\Majority$ closed languages, the following example seems to suggest that the reduced \GB basis for $\Max$ ($\Min$) closed language problems might have arbitrarily large degree.
  Let $n$ be an odd number. Consider the following generating set $$F=\{f_k\mydef x_k(x_{k+1}-1)(x_{k+2}-1)\mid k\leq n-2 \text{ and $k$ odd}\}\cup \{x_i^2-x_i\mid i\in [n]\}.$$
 Note that the degree of each polynomial from $F$ is at most 3, and $F$ is the generating set of a combinatorial ideal $\I_\Cc$ corresponding to an instance $\Cc\in \CSP(\Gamma)$ with $\Max\in\Pol(\Gamma)$ (this is an instance of dual-Horn 3-\textsc{sat}).
 We experimentally noticed (for every odd $n\leq 19$) that the reduced \GB basis has polynomials of degree up to $(n-1)/2+2$.

 We leave as an open problem to determine the size of the reduced \GB bases for $\Max$ (or $\Min$) closed language problems. We conjecture their sizes to be superpolynomial in the number of variables in the worst case.
\end{remark}

Computing a \GB basis is certainly a sufficient condition for membership testing, but not strictly necessary. In Section~\ref{sect:truncatedGB} we show how to efficiently resolve the membership question without computing a full \GB basis, but a \emph{truncated} one. As already remarked, this is considerably different from the  bounded degree version of Buchberger's algorithm considered in~\cite{CleggEI96}.

\subsection{Truncated \GB bases}\label{sect:truncatedGB}
Assuming $\Max\in\Pol(\Gamma)$ (or $\Min\in\Pol(\Gamma)$),
we want to test whether a given polynomial $f$ of degree $d$ lies in the combinatorial ideal $\I_\Cc$ corresponding to a given $\CSP(\Gamma)$ instance $\Cc$.
As already observed (see Section~\ref{sect:tract}), the membership test can be efficiently computed by using polynomials from the \emph{truncated} reduced \GB basis $G_d= G\cap \Field[x_1,\ldots,x_n]_d$, where $G$ is the reduced \GB basis for $\I_\Cc$.
Below we show how to compute $G_d$ in $n^{O(d+1)}$ time, for any degree $d\in \N$.
This yields an efficient algorithm for the membership problem (this is ``efficient'' because the size of the input polynomial $f$ is $n^{O(d)}$). ``Sparse'' polynomials, i.e. polynomials with ``few'' terms, are discussed in Section~\ref{sect:sparse}.

Note that the computation in $n^{O(d+1)}$ time of the truncated \GB basis is sufficient for efficiently computing Theta Bodies SDP relaxations and bound the bit complexity of $\sos$ for this class of problems (for these applications $d=O(1)$).

\begin{lemma}\label{th:Gd}
If $\Max\in\Pol(\Gamma)$ then the truncated \GB basis $G_d$ for the combinatorial ideal $\I_\Cc$ corresponding to a given $\emph{\CSP}(\Gamma)$ instance $\Cc$ can be computed in $n^{O(d+1)}$ time for any $d\in \N$.
\end{lemma}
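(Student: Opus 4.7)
The plan is to compute $G_d$ by running a degree-truncated variant of Buchberger's algorithm. Specifically, I would modify Algorithm~\ref{Algo:buchbergerAlgo} so that at line~\ref{eq:reminder} the $S$-polynomial $S(f,g)^*$ from~\eqref{eq:intnormal} is used in place of $S(f,g)$, and so that any polynomial whose total degree exceeds $d$ is immediately discarded rather than added to $G$. The initial generating set $F \subseteq \T^-$ of $\I_\Cc$ coming from the dual-Horn representation of the constraints consists of negative 2-terms polynomials whose degree is bounded by the (constant) maximum arity of relations in $\Gamma$, so $F$ already lies inside $\T^- \cap \Field[x_1,\ldots,x_n]_d$ for any $d$ at least this arity.

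The first key step is to maintain the invariant $G \subseteq \T^- \cap \Field[x_1,\ldots,x_n]_d$ throughout execution. Closure of $\T^-$ under polynomial division is Lemma~\ref{th:Max_Mindivclosure}, and closure of $\T^-$ under the $S(f,g)^*$-operation follows by applying the Interlacing Lemma~\ref{th:interlacing-lemma} exactly as in the proof of Lemma~\ref{th:max_2terms}, with $h\cdot f_1$ and $h\cdot g_1$ taken to be the leading negative terms of $f$ and $g$. Together these closure properties imply that every polynomial ever inserted into $G$ is a negative 2-terms polynomial of degree at most $d$.

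The second step is to bound the running time. Elements of $\T^- \cap \Field[x_1,\ldots,x_n]_d$ are specified by two subsets $A, B \subseteq [n]$ of size at most $d$ and a sign $\alpha \in \{-1,0,1\}$, yielding $|G| = O(n^{2d})$ at all times. Hence the modified algorithm adds at most $O(n^{2d})$ polynomials to $G$, examines at most $O(n^{4d})$ critical pairs, and performs polynomial-time arithmetic at each step, so the total runtime falls within $n^{O(d+1)}$ (the extra factor of $n$ absorbing the cost of a single reduction/normal form computation).

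The main obstacle will be justifying correctness: the truncated algorithm could a priori miss some element $g \in G_d$ whose ``natural'' Buchberger derivation passes through intermediate $S$-polynomials of degree $> d$. This is ruled out by the standard fact, specific to graded monomial orderings such as grlex, that reductions never increase total degree and every element of the reduced \GB basis of degree $\le d$ can be obtained through a sequence of $S$-polynomial reductions confined to degree $\le d$. Once this is established, a final interreduction pass over the output --- dividing each $g$ by the remaining polynomials of $G$, which stays inside $\T^-$ by Lemma~\ref{th:Max_Mindivclosure} --- extracts the unique reduced degree-$\le d$ slice $G_d$, completing the proof. The argument for $\Min \in \Pol(\Gamma)$ is identical with $\T^-$ replaced by $\T^+$.
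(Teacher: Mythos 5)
Your plan diverges from the paper's, and it has a genuine gap at exactly the point you flag as ``the main obstacle''. The claim that, for a graded order such as grlex, every element of the reduced \GB basis of degree $\le d$ can be obtained through a Buchberger derivation confined to degree $\le d$ is \emph{not} a standard fact: it is valid for homogeneous ideals, but $\I_\Cc$ is far from homogeneous (it contains $x_i^2-x_i$ and the inhomogeneous dual-Horn $2$-terms generators). In your truncated run, an $S$-polynomial of two retained polynomials may reduce to a $2$-terms element of degree strictly greater than $d$; you discard it, but that element may be needed later as one member of a critical pair whose reduced $S$-polynomial is a \emph{new} element of $G_d$ of degree $\le d$ (a ``degree fall'' through a high-degree intermediate). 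The paper's own remark after Lemma~\ref{th:max_2terms} shows that for $\Max$-closed instances the reduced \GB basis really does contain polynomials of degree growing linearly in $n$, so high-degree intermediates are not a hypothetical phenomenon here; to salvage your approach you would have to prove, using the $2$-terms structure, that no degree-$\le d$ element of the reduced basis ever requires such an intermediate, and no such argument is given. (Your closure and counting steps, $G\subseteq\T^-$ and $|\T^-\cap\Field[x_1,\ldots,x_n]_d|=n^{O(d)}$, are fine and match Lemma~\ref{th:Max_Mindivclosure} and Lemma~\ref{th:max_2terms}.)

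The paper avoids this issue entirely: it does not run any (truncated) Buchberger computation for this lemma. Instead it uses Lemma~\ref{th:max_2terms} only to conclude $G_d\subseteq\T^-_d\mydef\T^-\cap\Field[x_1,\ldots,x_n]_d$, then enumerates all $n^{O(d)}$ candidates $p\in\T^-_d$ and decides each membership $p\in\I_\Cc$ directly: by the Strong Nullstellensatz~\eqref{eq:strong_nstz} and radicality~\eqref{eq:ICradical}, $p\in\I_\Cc$ iff $p$ vanishes on $Sol(\Cc)=\Variety{\I_\Cc}$, which is checked by testing whether any of the $O(d)$ minimal non-vanishing partial assignments of the $2$-terms polynomial $p$ extends to a solution of $\Cc$ --- a polynomial-time task because the language (with singleton unary relations) remains tractable. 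The degree-$\le d$ slice of the reduced basis is then read off from the computed set $\T^-_d\cap\I_\Cc$. If you want to keep a Buchberger-style argument, you must either supply the missing truncation-soundness proof for this specific $2$-terms setting or switch to the paper's enumeration-plus-membership-oracle route.
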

\begin{proof}
By Lemma~\ref{th:max_2terms}, $G_d\subseteq \T^-_d\mydef \T^-\cap \Field[x_1,\ldots,x_n]_d$.
  Note that in $\T^-_d$ there are $n^{O(d)}$ polynomials of degree $\leq d$, each with $O(d)$ variables.
  For any given $p \in \T^-_d$ we can check whether $p\in\I_\Cc$ as follows.

  By the Strong Nullstellensatz~\eqref{eq:strong_nstz} and the radicality~\eqref{eq:ICradical} of $\I_\Cc$ (see~\eqref{eq:IC}),
an equivalent way to solve the membership problem $p\in \I_\Cc$ is to answer the following question:
\begin{align}\label{eq:gboracle}
  \text{Does it exist }\tilde{x}\in \{0,1\}^n \text{ such that } \left(p(\tilde{x})\not=0 \wedge \tilde{x}\in \Variety{\I_\Cc}\right) \text{ ? }
\end{align}
%
Note that the answer to Question~\eqref{eq:gboracle} is affirmative if and only if $p\not\in \Ideal{\Variety{\I_\Cc}}$ and therefore $p\not \in \I_\Cc$ by~\eqref{eq:strong_nstz} and~\eqref{eq:ICradical}.

Let $X_p$ be the set of variables appearing in $p$. Consider a subset $Y\subseteq X_p$ and a mapping $\phi: Y\rightarrow \{0,1\}$. We say that $(Y,\phi)$ is a \emph{non-vanishing partial assignment} of $p$ if there exists no assignment of the variables in $X_p\setminus Y$ that makes $p$ equal to zero while $\{x_i=\phi(x_i):i\in Y\}$; moreover, $(Y,\phi)$ is \emph{minimal} with respect to set inclusion if by removing any variable $x_j$ from $Y$ there is an assignment of the variables in $X_p\setminus (Y\setminus\{x_j\})$ that makes $p$ equal to zero while $\{x_i=\phi(x_i):i\in Y\setminus\{x_j\}\}$.

For each $p\in \T^-_d$ there are $O(d)$ minimal non-vanishing partial assignments. These correspond to minimal partial assignments that make the 2-terms sum in $p$ not zero: for example one term of $p$ equal to $1$, so all the variables in that term are set to zero, and the other term being equal to zero (or 1), so one variable in the other term is set to one (or all variables set to zero, depending on $p$). Note that for each $\tilde{x}\in \{0,1\}^n$ such that $p(\tilde{x})\not=0$ there is a minimal non-vanishing partial assignments. It follows that we can answer to question~\eqref{eq:gboracle} by simply checking for each minimal non-vanishing partial assignment for $p$ if it extends to a feasible solution for $\Cc$. The latter can be checked in polynomial time since $\Gamma$ is an idempotent constraint language, namely it contains all singleton unary
relations.
\end{proof}
Similarly as for $\Max$-closed languages we can obtain the following result.
\begin{lemma}\label{th:Gd_min}
If $\Min\in\Pol(\Gamma)$ then the truncated reduced \GB basis $G_d$ for the combinatorial ideal $\I_\Cc$ corresponding to a given $\emph{\CSP}(\Gamma)$ instance $\Cc$ can be computed in $n^{O(d+1)}$ time for any $d\in \N$.
\end{lemma}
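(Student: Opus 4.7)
The plan is to mirror the proof of Lemma~\ref{th:Gd} verbatim, substituting $\T^+$ for $\T^-$ throughout. First I would invoke Lemma~\ref{th:max_2terms} in its $\Min$ variant: since $\Min\in\Pol(\Gamma)$, the reduced \GB basis $G$ for $\I_\Cc$ is contained in $\T^+$, and hence $G_d \subseteq \T^+_d \mydef \T^+\cap \Field[x_1,\ldots,x_n]_d$. Each element of $\T^+_d$ is determined by at most two subsets $S_1,S_2\subseteq[n]$ with $|S_1|,|S_2|\leq d$ and $O(1)$ sign choices, together with the few polynomials of the form $\pm(x_i^2-x_i)$. Thus $|\T^+_d| = n^{O(d)}$, and it can be enumerated in $n^{O(d)}$ time.

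Next, for each candidate $p\in \T^+_d$ I would decide whether $p\in \I_\Cc$ in polynomial time. By the Strong Nullstellensatz~\eqref{eq:strong_nstz} combined with the radicality~\eqref{eq:ICradical} of $\I_\Cc$, this amounts to the question~\eqref{eq:gboracle}: does there exist $\tilde{x}\in\{0,1\}^n$ with $p(\tilde{x})\neq 0$ and $\tilde{x}\in Sol(\Cc)$? For a \emph{positive} 2-terms polynomial $p = \alpha \prod_{i\in S_1}x_i + \beta \prod_{j\in S_2}x_j$, a minimal non-vanishing partial assignment is obtained by forcing one of the two monomials to evaluate to a nonzero value (set every variable in the corresponding set to $1$) while killing the other (set some single variable in the other set to $0$), and symmetric variants when only one of the terms is nonzero; the polynomials $\pm(x_i^2-x_i)$ admit the two assignments $x_i=1$ and $x_i=0$ but none of them is non-vanishing. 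In all cases there are $O(d)$ such minimal partial assignments, and every $\tilde{x}$ with $p(\tilde{x})\neq 0$ extends one of them.

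For each minimal non-vanishing partial assignment $(Y,\phi)$, I would then check whether the extended $\CSP(\Gamma)$ instance obtained by adjoining the unit constraints $\{x_i=\phi(x_i):i\in Y\}$ is satisfiable. This is the crux: we need the extended problem to remain tractable. Here the key observation is that the singleton unary relations $\{0\}$ and $\{1\}$ are trivially preserved by $\Min$, so the augmented constraint language still has $\Min$ as a polymorphism and is therefore polynomial-time decidable by Schaefer's Theorem~\ref{th:schaefer} (equivalently, the augmented instance is a Horn-SAT instance with some unit clauses, which is tractable by unit resolution). If any of the $O(d)$ extensions is satisfiable, then $p\notin \I_\Cc$; otherwise $p\in \I_\Cc$.

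Finally, I would assemble $G_d$ by collecting those $p\in\I_\Cc$ surviving the standard reduced \GB cleanup (scaling so $\LC(p)=1$ and removing any $p$ whose monomials lie in $\langle\LT(G_d\setminus\{p\})\rangle$); uniqueness and correctness of the result follow from \cite[Theorem~5, p.93]{Cox:2015} applied to the truncation. The overall running time is $n^{O(d)}$ candidates times polynomial-time membership checks, giving the claimed $n^{O(d+1)}$ bound. The only place that genuinely differs from the $\Max$ case is the verification that tractability of the augmented CSP still holds on the $\Min$ side, which as noted is immediate because both Boolean singletons are $\Min$-closed.
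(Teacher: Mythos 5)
Your proposal is correct and is essentially the paper's own argument: the paper proves Lemma~\ref{th:Gd_min} only by the remark that it follows ``similarly as for $\Max$-closed languages,'' so your point-by-point transcription of the proof of Lemma~\ref{th:Gd} with $\T^+$ in place of $\T^-$ (including the observation that the singleton relations are preserved by the idempotent operation $\Min$, so the augmented instances remain tractable) is exactly the intended route. One small caution: lean on the general definition of minimal non-vanishing partial assignment rather than your parenthetical recipe, because when the two terms of $p\in\T^+$ have equal signs (e.g.\ $p=\prod_{i\in S_1}x_i+\prod_{j\in S_2}x_j$) the minimal assignments are simply ``all of $S_1$ (or all of $S_2$) set to $1$'' with no variable of the other term forced to $0$, and enumerating only assignments of the form ``one monomial equal to $1$, one variable of the other set to $0$'' would miss witnesses $\tilde{x}$ at which both monomials evaluate to $1$.
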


\subsection{Sparse Polynomials}\label{sect:sparse}
We call a polynomial positive (negative) \emph{$k$-sparse} if it can be
represented by $k$ positive (negative) terms (see Definition~\ref{def:2terms}) with nonzero coefficients.

In the following, we discuss $\Min$-closed languages and complement Lemma~\ref{th:Gd_min} by considering the membership problem for positive $k$-sparse polynomials of degree $d$. Note that positive $k$-sparse polynomials means polynomials with at most $k$ monomials with nonzero coefficients. A symmetric argument holds for $\Max$-closed languages by replacing positive terms with negative terms.

 When $k\ll n^{O(d)}$, we show that we can remove the exponential dependance on $d$ by (i) either efficiently compute a polynomial subset (that depends on $f$) $G_f\subseteq \I_\Cc$ such that $\reduce f {G_f}=0$, (ii) or show a certificate that $f\not\in \I_\Cc$.

\begin{lemma}
If $\Min\in\Pol(\Gamma)$ then we can test in $(kn)^{O(1)}$ time whether a given positive $k$-sparse polynomial $p$ lies in the combinatorial ideal $\I_\Cc$ of a given $\CSP(\Gamma)$ instance $\Cc$.
\end{lemma}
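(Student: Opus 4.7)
The plan is to reduce the test $p \in \I_\Cc$ to a purely syntactic check after rewriting $p$ in a ``closed'' normal form, exploiting the fact that $Sol(\Cc)$ is closed under coordinatewise $\Min$. Concretely, let $\mathcal{F} = \{\mathrm{ones}(\tilde{x}) : \tilde{x} \in Sol(\Cc)\} \subseteq 2^{[n]}$; since $\Min \in \Pol(\Gamma)$, the family $\mathcal{F}$ is closed under intersection, so for every $S \subseteq [n]$ such that some $F \in \mathcal{F}$ contains $S$ there is a unique minimum such $F$, which I denote $C(S)$. The set $C(S)$ is exactly the support of the minimum model of $\Cc$ in which every variable of $S$ is forced to $1$. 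Because $\Min$-closed Boolean relations are precisely those definable by Horn clauses (as recalled at the beginning of this section), this minimum model (or a certificate that none exists) can be computed in time $n^{O(1)}$ by the standard Horn forward-chaining algorithm applied to the Horn expansion of $\Cc$.

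Next I would exploit the identity $\prod_{j \in S} x_j \equiv \prod_{j \in C(S)} x_j \pmod{\I_\Cc}$ whenever $C(S)$ is defined, and $\prod_{j \in S} x_j \in \I_\Cc$ otherwise; both are immediate from the definition of $C(S)$ together with evaluation on $Sol(\Cc)$, since for every $\tilde{x} \in Sol(\Cc)$ the value $\prod_{j \in S} \tilde{x}_j$ equals $1$ iff $S \subseteq \mathrm{ones}(\tilde{x})$ iff $C(S) \subseteq \mathrm{ones}(\tilde{x})$. Replacing each of the (at most) $k$ positive terms of $p$ by its closed counterpart, and then combining like monomials, produces in $(kn)^{O(1)}$ time a polynomial $\bar{p} = \sum_{F} \beta_F \prod_{j \in F} x_j$ with $p - \bar{p} \in \I_\Cc$ and with every $F$ appearing in $\mathrm{supp}(\bar{p})$ lying in $\mathcal{F}$.

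The crux is then to show that $\bar{p} \in \I_\Cc$ iff $\bar{p} = 0$ as a polynomial; one direction is immediate. For the other, observe that for any $\tilde{x} \in Sol(\Cc)$ with $F' := \mathrm{ones}(\tilde{x}) \in \mathcal{F}$, one has $\bar{p}(\tilde{x}) = \sum_{F \in \mathcal{F},\, F \subseteq F'} \beta_F$, i.e. the evaluation of $\bar{p}$ on $Sol(\Cc)$ is exactly the zeta transform of $\beta : \mathcal{F} \to \mathbb{R}$ on the finite poset $(\mathcal{F}, \subseteq)$. The zeta matrix of a finite poset is unitriangular with respect to any linear extension and is therefore invertible, so vanishing of the transform on $\mathcal{F}$ forces $\beta$ itself to vanish on $\mathcal{F}$; hence every coefficient of $\bar{p}$ is zero. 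Combined with the above reductions, a final comparison $\bar{p} \stackrel{?}{=} 0$ decides membership in $(kn)^{O(1)}$ time.

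I expect the main obstacle to be packaging the Möbius inversion argument without ever constructing $\mathcal{F}$ explicitly (it can be exponentially large). This is handled by the observation that only the at most $k$ elements of $\mathrm{supp}(\bar{p}) \subseteq \mathcal{F}$ ever enter the argument, so the relevant restriction of the zeta map is a $k \times k$ unitriangular matrix obtained by sorting the supporting sets by inclusion; everything else -- computing each $C(S_i)$ by Horn propagation, rewriting monomials, combining like terms, testing $\bar{p} = 0$ -- is routine polynomial bookkeeping, giving the claimed $(kn)^{O(1)}$ bound.
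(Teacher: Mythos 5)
Your proof is correct, but it takes a genuinely different route from the paper. The paper's argument leans on the structural result of Lemma~\ref{th:max_2terms} (the reduced \GB basis lies in $\T^+$) to deduce that if a positive $k$-sparse $p$ is in $\I_\Cc$, then some pair of its monomials admits a $\{0,\pm 1\}$-combination already in $\I_\Cc$; it then tests all $O(k^2)$ pairs with the minimal non-vanishing partial assignment oracle from the proof of Lemma~\ref{th:Gd}, subtracts the certified 2-terms polynomial, and recurses at most $k$ times. You bypass the \GB machinery entirely: using only $\Min$-closure (equivalently, Horn-definability) you replace each monomial support $S$ by the support $C(S)$ of the minimal model forcing $S$ to one (forward chaining), which is legitimate because $\I_\Cc$ is exactly the vanishing ideal of $Sol(\Cc)$ (Theorem~\ref{th:nullstz}), and then you show the closed monomials are linearly independent as functions on $Sol(\Cc)$ via the unitriangular inclusion (zeta) matrix on the intersection-closed family of solution supports, so membership collapses to a syntactic zero test on the rewritten polynomial; restricting the poset argument to the at most $k$ supports keeps everything in $(kn)^{O(1)}$. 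What each approach buys: yours is a one-pass, non-recursive argument that needs no information about the \GB basis, handles arbitrary coefficients transparently, and dualizes to $\Max$-closed languages by taking maximal models and negative terms; the paper's version stays inside its 2-terms framework, reuses the oracle of Lemma~\ref{th:Gd} verbatim, and produces an explicit decomposition of $p$ into 2-terms polynomials of $\I_\Cc$, i.e.\ something closer to a membership derivation rather than only a yes/no answer. The only points worth making explicit in a polished write-up are the (easy) facts that a general monomial is congruent modulo the Boolean polynomials to the multilinear monomial on its support, and that the finite, fixed language $\Gamma$ guarantees the Horn expansion of $\Cc$ has polynomial size; neither affects correctness.
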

\begin{proof}
Let $G$ be the reduced \GB basis of $\I_\Cc$ (according to grlex order). By Lemma~\ref{th:max_2terms} we know that $G\subseteq \T^+$.
   We assume w.l.o.g. that $p$ is multilinear, otherwise we denote by $p$ the remainder of $p$ divided by $\{x_i^2-x_i:i\in[n]\}$.

   If $p\in \I_\Cc$ then there exists a (finite) set of (positive) 2-terms polynomials $\{g_1,\ldots,g_\ell\}\subseteq G$ such that
  $p=\sum_{i=1}^\ell g_i \cdot q_i$ where $q_i\in \Field[x_1,\ldots,x_n]$ and $\multideg(g_i \cdot q_i)\leq \multideg(p)$ (see Lemma~\ref{th:gbprop}).

  Each $g_i\cdot q_i$ is a (weighted) sum of positive 2-terms polynomials ($q_i$ is a weighted sum of monomials and $g_i$ times any weighted monomial is a weighted positive 2-terms polynomial from $\I_\Cc$). It follows that $p=\sum_{t\in S} c_t\cdot  t$ for some $S\subseteq \T^+\cap\I_\Cc$ and $c_t\in\Field$. Each $t\in S$ can be written as $t=t_a+t_b$, where $t_a,t_b$ are two positive terms and $p=\sum_{t\in S} c_t \cdot(t_{a}+t_{b})$.
We start observing the following simple argument.
  If there are two (not equal) polynomials from $S$, say $u,t\in S$ such that $u+t=t_a+u_b\in \T^+\cap\I_\Cc$ then we have $c_u \cdot u+ c_t \cdot t\in \I_\Cc$ and
  $$c_u \cdot u+ c_t \cdot t=(c_u-c_t) \cdot u+ c_t \cdot (u+t)=(c_u-c_t) \cdot \overbrace{u}^{\text{in } \T^+\cap\I_\Cc}+ c_t \cdot \overbrace{(t_a+ u_b)}^{\text{in } \T^+\cap\I_\Cc}.$$

We are assuming that $p$ is $k$-sparse, therefore $p=\sum_{i=1}^k w_{i}\cdot \mu_i$ for some $w_i\in \Field$ and $\mu_i$ monomials in $\Field[x_1,\ldots,x_n]$.
From the example above it is easy to argue that if $p\in \I_\Cc$ then there exists a pair of monomials $\mu_i$ and $\mu_j$ such that $\mu_i+\alpha \mu_j\in \I_\Cc$, for some $\alpha\in \{0,\pm 1\}$. For each pair $\mu_i,\mu_j$ of monomials we can check in polynomial time whether $\mu_i+\alpha \mu_j\in \I_\Cc$ for some $\alpha\in \{0,\pm 1\}$ (the polynomial time algorithm is similar to the one described in the proof of Lemma~\ref{th:Gd}). If none of the algebraic sums of pairs is in $\I_\Cc$ then we can conclude that $p\not \in \I_\Cc$. Otherwise, if $\mu_i+\alpha \mu_j\in \I_\Cc$ for some $i,j$ and $\alpha\in \{0,\pm 1\}$ then if $p\in \I_\Cc$ then also $p-w_i(\mu_i+\alpha\mu_j)\in \I_\Cc$. In the latter case we apply the same arguments to $p-w_i(\mu_i+\alpha\mu_j)$ but now the new polynomial has one monomial less, so in at most $k$ times either we conclude that $p\in \I_\Cc$ or $p\not\in \I_\Cc$.
\end{proof}


\section{The Ideal Membership Problem: Intractability}\label{sect:necessary}
In this section we provide the proof of Theorem~\ref{th:hardness}. Let us start by giving the following definition.
\begin{definition}\label{def:unary}
  We say that an operation $f: D^k\to D$ is \emph{essentially unary} if there exists a coordinate $i\in\{1,\ldots,k\}$ and a unary operation $g:D\to D$ such that $f(d_1,\ldots,d_k)=g(d_i)$ for all values $d_1,\ldots,d_k\in D$. If in addition it happens that $g$ is bijective then we say that $f$ \emph{acts as a permutation}.
\end{definition}
A result by Post in 1941 (see e.g. \cite[Theorem 5.1]{Chen09}) says that a clone over $\{0,1\}$ either contains only essentially unary operations, or contains one of the the following four operations $\{\Majority, \Minority, \Min, \Max\}$.
Observe that the only essentially unary operations on $\{0,1\}$ are the two constant operations and the two permutations (the identity and $\neg$).   

With this in place we are ready to prove Theorem~\ref{th:hardness}.
For convenience we repeat the statement of Theorem~\ref{th:hardness} below.
%
\begin{lemma}\label{th:necessary}
  Let $\Gamma_1$ be a Boolean language with the solution space of every constraint closed under one constant operation $c \in\{0,1\}$.
  Let $\Gamma_2$ be a Boolean language with the solution space of every constraint closed under both constant operations $1$ and $0$. Assume that $\Gamma_1$ and $\Gamma_2$ are not simultaneously closed under any of $\Majority$, $\Minority$, $\Min$ or $\Max$ operations. Then, for $i\in\{1,2\}$, the problem $\IMP_i(\Gamma_i)$ is coNP-complete. If the constraint language $\Gamma_2$ has the operation $\neg$ as a polymorphism then the problem $\IMP_1(\Gamma_2)$ is coNP-complete.
\end{lemma}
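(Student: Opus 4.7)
The plan is to establish membership in coNP uniformly from the discussion following Definition~\ref{def:IMP}: for any `no'-instance $(\Cc, f)$, any point of $Sol(\Cc) = \Variety{\I_\Cc}$ on which $f$ is nonzero is a polynomial-time verifiable certificate. The substance of the proof is coNP-hardness. My strategy is uniform across the three claims: for each I identify a natural NP-complete CSP obtained from $\Gamma_i$ by adjoining the missing singleton relation(s) $\{0\}$ and/or $\{1\}$, and reduce it to $\IMP_d(\Gamma_i)$ using (i) a pinning gadget pp-definable from $\Gamma_i$ via the Galois correspondence, and (ii) a polynomial $f$ of the required degree $d$ whose vanishing on $Sol(\Cc)$ captures unsatisfiability of the pinned instance.

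The NP-hardness of the adjoined CSPs follows from Lemma~\ref{th:post} together with Theorem~\ref{th:schaefer}. Since $\Pol(\Gamma_i)$ contains none of $\Majority,\Minority,\Min,\Max$, Post's theorem forces it to be essentially unary, built from $\{\mathrm{id}, c_0, c_1, \neg\}$ under composition. In Case 1 (WLOG $c=0$), $\Gamma_1$ is not $1$-valid, so $c_1,\neg \notin \Pol(\Gamma_1)$; adjoining $\{1\}$ additionally kills $c_0$, leaving only the projections, so $\CSP(\Gamma_1 \cup \{\{1\}\})$ is NP-complete. In Cases 2 and 3, adjoining both $\{0\}$ and $\{1\}$ forces idempotency, which kills both constants and $\neg$, again leaving only the projections, so $\CSP(\Gamma_2 \cup \{\{0\},\{1\}\})$ is NP-complete. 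By the Galois correspondence $\Pol$--$\Inv$, every $0$-valid relation is pp-definable from $\Gamma_1$; every $0,1$-valid relation is pp-definable from $\Gamma_2$ when $\neg \notin \Pol(\Gamma_2)$; and every $0,1$-valid, $\neg$-closed relation is pp-definable from $\Gamma_2$ when $\neg \in \Pol(\Gamma_2)$.

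For Case 1, given an instance $\Cc'$ of $\CSP(\Gamma_1 \cup \{\{1\}\})$ with pinning set $U_1 = \{i : \text{$x_i = 1$ is a constraint of $\Cc'$}\}$, I introduce a fresh variable $y$ and pp-define from $\Gamma_1$ the implication constraint $y \to x_i$ (the $0$-valid relation $\{(0,0),(0,1),(1,1)\}$) for every $i \in U_1$; let $\Cc$ be the resulting $\CSP(\Gamma_1)$ instance. Then $\Cc'$ is satisfiable iff $\Cc$ has a solution with $y = 1$ iff the degree-$1$ polynomial $f = y$ does not lie in $\I_\Cc$, so $\Cc'$ is unsatisfiable iff $f \in \I_\Cc$. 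For Case 2, I use two fresh variables $y_0, y_1$: when $\neg \notin \Pol(\Gamma_2)$, I pp-define the gadgets $y_1 \to x_i$ for $i \in U_1$ and $x_j \to y_0$ for $j \in U_0$; when $\neg \in \Pol(\Gamma_2)$, I replace them by the $\neg$-closed equality gadgets $y_1 = x_i$ and $y_0 = x_j$ (which are pp-definable since equality is $0,1$-valid and $\neg$-closed). Taking $f = y_1(1 - y_0)$, a degree-$2$ polynomial, we get $f \in \I_\Cc$ iff no solution of $\Cc$ has $(y_1, y_0) = (1, 0)$, iff $\Cc'$ is unsatisfiable.

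For Case 3, when $\neg \in \Pol(\Gamma_2)$, the $\neg$-symmetry of $Sol(\Cc)$ lets me drop a degree: using the same equality gadgets as in Case 2, I take $f = y_1 - y_0$. Then $f \in \I_\Cc$ iff every solution satisfies $y_1 = y_0$; since any $\Cc$-solution with $(y_1, y_0) = (1, 0)$ is paired by $\neg$ with one having $(y_1, y_0) = (0, 1)$, the existence of a solution with $y_1 \neq y_0$ is equivalent to the existence of one with $(y_1, y_0) = (1, 0)$, which by the equality gadgets forces $x_i = 1$ for all $i \in U_1$ and $x_j = 0$ for all $j \in U_0$, and hence corresponds to a solution of $\Cc'$. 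The step I expect to be most delicate is verifying that this chain of equivalences is tight, in particular that the $\neg$-closed equality gadgets do not create spurious solutions of $\Cc$ with $y_1 \neq y_0$ that fail to restrict to a $\Cc'$ solution; this amounts to a careful check of $\Inv(\Pol(\Gamma_2))$ together with an appeal to Geiger's theorem for polynomial-time constructibility of the pp-definitions.
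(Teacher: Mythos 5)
Your proposal is correct, and its skeleton is the paper's: both prove coNP-membership by exhibiting a non-vanishing solution as certificate, both reduce the NP-complete singleton expansion $\CSP(\Gamma_i\cup\{\text{singletons}\})$ to the complement of $\IMP_d(\Gamma_i)$, both use test polynomials of exactly the right degrees (your $y$, $y_1(1-y_0)$, $y_1-y_0$ play the roles of the paper's $x_i-c$, $x_b(1-x_a)$, $x_a-x_b$), and both dispose of the last case by flipping a witnessing solution with the polymorphism $\neg$. The genuine difference is how the pinnings are simulated inside $\Gamma_i$. The paper uses no algebraic machinery at all: it identifies all variables pinned to the same value with a single representative variable (substitution of variables is legal in a $\CSP$ instance), so the reduced instance is over $\Gamma_i$ by construction and the test polynomial is written directly in the representative variables. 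You instead introduce fresh indicator variables and pp-define an implication gadget, justified via Post's classification (the clone is essentially unary) together with the $\Pol$--$\Inv$ Galois correspondence; this is sound, since the relevant implication relation is invariant under every essentially unary operation whose unary part is the identity or the constant $c$, but it is heavier than needed. Two small remarks on that route: for a fixed finite $\Gamma_i$ the pp-definition is a constant-size object, so no appeal to effective (Geiger-style) constructibility is required -- one only needs to expand the equalities of the pp-formula by identification and to note that the auxiliary existentially quantified variables do not affect whether $f$ lies in the vanishing ideal $\I_\Cc$; and your case split in the degree-$2$ reduction is unnecessary, since the equality gadget (i.e.\ identification) works whether or not $\neg\in\Pol(\Gamma_2)$, which is exactly what the paper does. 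What your route buys is modularity: it makes explicit which relations are available in $\langle\Gamma_i\rangle$, the form of argument most likely to generalize; what the paper's route buys is a completely elementary, self-contained reduction.
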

\begin{proof}
The \emph{singleton expansion} of language $\Gamma_1$ is the language $\Lambda_1=\Gamma_1\cup\{(1-c)\}$. Let $\Lambda_2=\Gamma_2\cup\{(0)\}\cup\{(1)\}$ be the constraint language obtained by augmenting $\Gamma_2$ with both unary relations of size one. By Theorem~\ref{th:schaefer} and the assumptions on $\Gamma_i$, $\CSP(\Lambda_i)$ is NP-complete, for $i\in \{1,2\}$. We show that $\CSP(\Lambda_i)$ polynomial-time reduces to not-$\IMP_i(\Gamma_i)$, for $i\in \{1,2\}$.

\begin{enumerate}
    \item In the following we show that $\CSP(\Lambda_1)$ polynomial-time reduces to not-$\IMP_1(\Gamma_1)$. The latter implies that $\IMP_1(\Gamma_1)$ is coNP-complete as claimed.
    
    Let $\Cc_{\Lambda_1}=(X,\{0,1\},C_{\Lambda_1})$ be a given instance of $\CSP(\Lambda_1)$ and let $C_{\Gamma_1}\subseteq C_{\Lambda_1}$ be the maximal set of constraints from $C_{\Lambda_1}$ over the language $\Gamma_1$. Instance $\Cc_{\Lambda_1}$ can be seen as the instance $\Cc_{\Gamma_1}=(X,\{0,1\},C_{\Gamma_1})$ of $\CSP(\Gamma_1)$ further restricted by a partial assignment $A=\{x_i=1-c \mid x_i\in Y\}$, for some $Y\subseteq X$ such that $C_{\Lambda_1}=C_{\Gamma_1}\cup A$. Note that in the combinatorial ideal 
    $\I_{\Cc_{\Lambda_1}}$ 
    all the variables in $Y$ are congruent to each other and we can work in a smaller polynomial ring. This suggests the following reduction from instance $\Cc_{\Lambda_1}$ to an instance of $\IMP_1(\Gamma_1)$: choose any $x_i\in Y$ and replace every occurrence of $x_j\in Y\setminus \{x_i\}$ in instance $\Cc_{\Gamma_1}=(X,\{0,1\},C_{\Gamma_1})$ with $x_i$ to get an instance $\Cc$ from $\CSP(\Gamma_1)$; consider the input polynomial $f=x_i-c$. Note that $(\Cc,f)$ forms a valid input for problem $\IMP_1(\Gamma_1)$ where we want to test if $f\in \I_\Cc$. We show that if we can test $f\not\in \I_\Cc$ in polynomial time then we can decide the satisfiability of $\Cc_{\Lambda_1}$ in polynomial time and the claim follows.

    As already observed in the proof of Lemma~\ref{th:Gd}, by the Strong Nullstellensatz~\eqref{eq:strong_nstz} and the radicality~\eqref{eq:ICradical} of $\I_\Cc$ (see~\eqref{eq:IC}),
    it follows that an equivalent way to solve the membership problem $p\in \I_\Cc$, for any given $p\in \Field[x_1,\ldots,x_n]$, is to answer the following question: Does it exist $\tilde{x}\in \{0,1\}^n$ such that $p(\tilde{x})\not=0 \wedge \tilde{x}\in \Variety{\I_\Cc}$?
    Note that the answer to this question is affirmative if and only if $p\not\in \Ideal{\Variety{\I_\Cc}}$ and therefore $p\not \in \I_\Cc$ by~\eqref{eq:strong_nstz} and~\eqref{eq:ICradical}. Vice versa, if it is negative then the following holds:
    %
      $\forall\tilde{x}\in \{0,1\}^n  \left(p(\tilde{x})=0 \vee \tilde{x}\not\in \Variety{\I}\right)$,
    %
    which implies that $p(x)=0$ for all $x\in \Variety{\I}$ and therefore $p\in \Ideal{\Variety{\I_\Cc}}=\I_\Cc$.
    With this in mind, note that $f\not\in\I_\Cc$ if and only if $\Cc_{\Lambda_1}$ is satisfiable and the claim follows.
    %
    %
    \item\label{case2} In the following we show that $\CSP(\Lambda_2)$ polynomial-time reduces to not-$\IMP_2(\Gamma_2)$. The latter implies that $\IMP_2(\Gamma_2)$ is coNP-complete as claimed. 
    
    The construction is similar to the previous case.  
    Let $\Cc_{\Lambda_2}=(X,\{0,1\},C_{\Lambda_2})$ be a given instance of $\CSP(\Lambda_2)$ and let $C_{\Gamma_2}\subseteq C_{\Lambda_2}$ be the maximal set of constraints from $C_{\Lambda_2}$ over the language $\Gamma_2$. 
    Instance $\Cc_{\Lambda_2}$ can be seen as the instance $\Cc_{\Gamma_2}=(X,\{0,1\},C_{\Gamma_2})$ of $\CSP(\Gamma_2)$ further restricted by a partial assignment $A=\{x_i=0 \mid x_i\in Y_0\}\cup \{x_i=1 \mid x_i\in Y_1\}$, for some $Y_0, Y_1\subseteq X$ with $Y_0\cap Y_1=\emptyset$ and such that $C_{\Lambda_2}=C_{\Gamma_2}\cup A$. Note that in the combinatorial ideal 
    $\I_{\Cc_{\Lambda_2}}$ 
    all the variables in $Y_0$ (or $Y_1$, respectively) are congruent to each other and we can work in a smaller polynomial ring. This suggests the following reduction from instance $\Cc_{\Lambda_2}$ to an instance of $\IMP_2(\Gamma_2)$: choose any $x_a\in Y_0$ (or $x_b\in Y_1$) and replace every occurrence of $x_j\in Y_0\setminus \{x_a\}$ and every occurrence of $x_k\in Y_1\setminus \{x_b\}$  in instance $\Cc_{\Gamma_2}=(X,\{0,1\},C_{\Gamma_2})$ with $x_a$ and $x_b$, respectively, to get an instance $\Cc_2=(X_2,\{0,1\},C_2)$ from $\CSP(\Gamma_2)$. Consider the following polynomial: 
    $f=x_b(1-x_a)$.
    Note that $f$ has degree 2 and $(\Cc_2,f)$ forms a valid input for problem $\IMP_2(\Gamma_2)$ where we want to test if $f\in \I_{\Cc_2}$. 
    Moreover the only solutions from $Sol(\Cc_2)$ that make $f$ identically zero are $x_a=x_b$ and $x_a=1\wedge x_b=0$.
    It follows that if we can test $f\not\in \I_\Cc$ in polynomial time then we can decide the satisfiability of $\Cc_{\Lambda_2}$ in polynomial time and the claim follows.
    %
    \item If the constraint language $\Gamma_2$ has the operation $\neg$ as a polymorphism then in the following we show that $\CSP(\Lambda_2)$ polynomial-time reduces to not-$\IMP_1(\Gamma_2)$. The latter implies that $\IMP_1(\Gamma_2)$ is coNP-complete as claimed. 
    
    Consider the above described instance $\Cc_2$ (see case \eqref{case2}) from $\CSP(\Gamma_2)$ and the polynomial $f=x_a-x_b$. Note that $(\Cc_2,f)$ forms a valid input for problem $\IMP_1(\Gamma_2)$ where we want to test if $f\in \I_{\Cc_2}$. If $f\not\in \I_{\Cc_2}$ then there is a solution $g:X_2\to \{0,1\}$ with $g(x_a)\not= g(x_b)$. If $g(x_a)=0 \wedge g(x_b)=1$ then the instance $\Cc_{\Lambda_2}$ from  $\CSP(\Lambda_2)$ has a solution. Otherwise suppose $g:X_2\to \{0,1\}$ satisfies $\Cc_2$ with $g(x_a)=1 \wedge g(x_b)=0$. Then the mapping $h: X_2\to \{0,1\}$ defined by $h(x)=\neg g(x)$ for all $x\in X_2$, satisfies the instance $\Cc_{\Lambda_2}$, since the constraint language $\Gamma_2$ has the operation $\neg$ as a polymorphism.
\end{enumerate}
\end{proof}
Let $\Gamma_2$ be a Boolean language with the solution space of every constraint closed under $1$ and $0$, but not simultaneously closed under any of $\{\Majority,\Minority,\Min,\Max,\neg\}$.
The only case that Lemma~\ref{th:necessary} does not cover is the complexity of $\IMP_1(\Gamma_2)$. Note that $\IMP_2(\Gamma_2)$ is coNP-complete but we do not know if $\IMP_1(\Gamma_2)$ is coNP-complete as well. However, we do not expect that $\IMP_1(\Gamma_2)$ is solvable in polynomial time as suggested by the following lemmas. 

\begin{lemma}
There is a Boolean constraint language $\Gamma_2$ with the solution space of every constraint closed under $1$ and $0$, but not simultaneously closed under any of $\{\Majority,\Minority,\Min,\Max,\neg\}$ operations, such that $\IMP_1(\Gamma_2)$ is coNP-complete.
\end{lemma}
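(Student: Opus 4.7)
\textit{The plan is to exhibit one explicit relation $R$ for which $\Gamma_2=\{R\}$ satisfies the closure hypotheses, and then to reduce the NP-complete problem $\CSP(\Gamma_2\cup\{(0),(1)\})$ to $\lnot\IMP_1(\Gamma_2)$ via a degree-$1$ polynomial on two designated variables.} Take the quaternary relation
\[
R=\{(0,0,0,0),\,(1,1,1,1),\,(1,0,1,0),\,(1,1,0,0)\},\qquad \Gamma_2=\{R\}.
\]
A case-by-case check gives: $R$ contains $(0,0,0,0)$ and $(1,1,1,1)$, so $\Gamma_2$ is closed under both constant operations; $R$ is preserved by none of $\{\neg,\Min,\Max,\Majority,\Minority\}$, as witnessed respectively by $\neg(1,0,1,0)=(0,1,0,1)\notin R$, $\Min((1,0,1,0),(1,1,0,0))=(1,0,0,0)\notin R$, $\Max((1,0,1,0),(1,1,0,0))=(1,1,1,0)\notin R$, $\Majority((0,0,0,0),(1,0,1,0),(1,1,0,0))=(1,0,0,0)\notin R$, and $\Minority$ on the same triple $=(0,1,1,0)\notin R$. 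Hence $\Gamma_2$ meets all the hypotheses of the lemma. A direct inspection of $R$ yields the pp-definition of the asymmetric implication relation
\[
a\leq b\ \iff\ \exists\, y_1,y_2\ R(b,a,y_1,y_2),
\]
since the set of pairs $(b,a)$ appearing in tuples of $R$ is $\{(0,0),(1,0),(1,1)\}$, equivalently $a\leq b$; in particular $(a,b)=(1,0)$ is ruled out because no tuple of $R$ begins with $(0,1,\cdot,\cdot)$.

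Let $\Lambda_2=\Gamma_2\cup\{(0),(1)\}$; by Theorem~\ref{th:schaefer}, $\CSP(\Lambda_2)$ is NP-complete. Given an instance $\Cc_{\Lambda_2}=(X,\{0,1\},C_{\Lambda_2})$, partition its singleton constraints into $Y_0\to 0$ and $Y_1\to 1$, and let $C_{\Gamma_2}\subseteq C_{\Lambda_2}$ be the remaining $\Gamma_2$-constraints. Construct a $\CSP(\Gamma_2)$ instance $\Cc$ by (a) collapsing every variable in $Y_0$ to a fresh variable $x_a$ and every variable in $Y_1$ to $x_b$ inside $C_{\Gamma_2}$, and (b) adding one constraint $R(x_b,x_a,y_1,y_2)$ with two fresh auxiliaries. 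Pick $f=x_b-x_a$. The gadget forces $(x_a,x_b)\in\{(0,0),(0,1),(1,1)\}$ in every solution of $\Cc$; the all-zero and all-one assignments witness $(0,0)$ and $(1,1)$ respectively by $0$/$1$-validity, whereas $(x_a,x_b)=(0,1)$ is attainable in $\Cc$ iff $\Cc_{\Lambda_2}$ is satisfiable (setting $y_1=1,\,y_2=0$, and extending the remaining variables by the original $\Lambda_2$-solution). Since $f$ vanishes on $(0,0)$ and $(1,1)$ but $f(0,1)=1$, we conclude $f\in\I_\Cc$ iff $\Cc_{\Lambda_2}$ is unsatisfiable. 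Combined with $\IMP_1(\Gamma_2)\in\mathrm{coNP}$ this yields coNP-completeness.

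The crux (and main obstacle) is the missing $\neg$-symmetry: the polynomial $f=x_b-x_a$ alone cannot distinguish a genuine ``forward'' witness $(x_a,x_b)=(0,1)$ of $\Cc_{\Lambda_2}$-satisfiability from a spurious ``reverse'' $(1,0)$, which is exactly why the analogous argument in Lemma~\ref{th:necessary} required $\neg\in\Pol(\Gamma_2)$. The fix is the asymmetric $\leq$-gadget pp-defined from $R$, so the whole argument hinges on finding a single relation such that (i) its polymorphism clone excludes all of $\{\Majority,\Minority,\Min,\Max,\neg\}$, and simultaneously (ii) some pair of its coordinate projections equals the implication relation. Exhibiting such an $R$ explicitly, as above, is the technical heart of the proof.
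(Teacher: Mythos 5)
Your proof is correct. The closure checks on $R=\{(0,0,0,0),(1,1,1,1),(1,0,1,0),(1,1,0,0)\}$ all hold, the NP-completeness of $\CSP(\Gamma_2\cup\{(0),(1)\})$ follows from Theorem~\ref{th:schaefer} (together with Lemma~\ref{th:post}: the clone is idempotent and your witnesses exclude all four non-trivial idempotent generators, so it contains only projections), and the gadget $R(x_b,x_a,y_1,y_2)$ does exclude $(x_a,x_b)=(1,0)$, so $x_b-x_a\in\I_\Cc$ iff the $\Lambda_2$-instance is unsatisfiable. Your route differs from the paper's in the concrete construction: the paper uses the $5$-ary relation $R=\{(z_1,z_2,z_3,0,1)\mid (z_1,z_2,z_3)\in R_{\textsc{nae}}\}\cup\{(c,c,c,c,c)\mid c\in\{0,1\}\}$, whose last two coordinates act as flags pinned to $(0,1)$ in all non-constant tuples, and reduces directly from $\CSP(\{R_{\textsc{nae}}\})$ (NP-complete off the shelf): attaching the same pair $(y_0,y_1)$ to every constraint, the NAE instance is satisfiable iff $y_0-y_1$ is not in the ideal. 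You instead reduce from the singleton expansion of your own $\Gamma_2$ (whose hardness you re-derive from Schaefer), collapse the pinned variables exactly as in the reduction of Lemma~\ref{th:necessary}, and replace the missing polymorphism $\neg$ by a single asymmetric gadget: the projection of $R$ onto its first two coordinates is the implication relation, which rules out the reverse witness. Both arguments rest on the same mechanism (an asymmetry in the relation eliminates the spurious $(1,0)$ solution); the paper's choice makes the reduction essentially one line because the flags are built into every constraint and the source problem needs no new hardness argument, while yours is more self-contained and isolates the two properties of $R$ that actually matter, namely avoiding all five polymorphisms and pp-defining implication by coordinate projection.
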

\begin{proof}
Let $R_{\textsc{nae}}=\{0,1\}^3\setminus \{(0,0,0),(1,1,1)\}$ and let $\Lambda$ be the Boolean constraint language $\{R_{\textsc{nae}}\}$. By using Schaefer's Theorem~\ref{th:schaefer} we have that $\CSP(\Lambda)$ is NP-complete. 

Let us define the following relations:
\begin{align*}
 R&=\left \{ (z_1,z_2,z_3,0,1)\mid  (z_1,z_2,z_3)\in R_{\textsc{nae}} \right\}\cup \{(c,c,c,c,c)\mid c\in \{0,1\}\}.
\end{align*}
Note that $R$ is closed under both constant operations $1$ and $0$, but $R$ is not closed under any of $\{\Majority,\Minority,\Min,\Max,\neg\}$.


Let $\Ll=(X,D,C)$ be an instance of $\CSP(\Lambda)$,
where $X=\{x_1,\ldots,x_n\}$ is a set of $n$ variables, $D=\{0,1\}$ and $C$  is a set of constraints over $\Lambda$ with variables from $X$. For any given instance $\Ll=(X,D,C)$ we build an instance $\Ga=(X\cup\{y_0,y_1\},D,C')$ of $\CSP(\{R\})$  with $C'=\{R(x_i,x_j,x_k,y_0,y_1)\mid R_{\textsc{nae}}(x_i,x_j,x_k)\in C\}$. 
The claim follows by observing that if $y_0-y_1\not \in \I_{\Ga}$ then $\Ll$ admits a solution.
\end{proof}

\begin{lemma}
Let $\Gamma_2$ be a Boolean constraint language with the solution space of every constraint closed under $1$ and $0$, but not simultaneously closed under any operation from $\{\Majority,\Minority,\Min,\Max,\neg\}$. If for every no-instance $(\Cc_2,f)$ of $\IMP_1(\Gamma_2)$, namely every instance $\Cc_2=(X_2,\{0,1\},C_2)$ of $\CSP(\Gamma_2)$ and a linear polynomial $f(X_2)\not\in \I_{\Cc_2}$, we can compute in polynomial time a feasible solution $g:X_2\to \{0,1\}$ such that $f(g(X_2))\not= 0$ with $g(X_2)\in Sol(\Cc_2)$, then P=NP. 
\end{lemma}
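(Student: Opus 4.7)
The strategy is to show that the hypothesised polynomial-time witness algorithm for $\IMP_1(\Gamma_2)$ lets us decide the NP-complete problem $\CSP(\Lambda_2)$ in polynomial time, where $\Lambda_2=\Gamma_2\cup\{\{0\},\{1\}\}$. Indeed, by the Post-style statement quoted at the start of Section~\ref{sect:necessary} together with the hypotheses on $\Gamma_2$, the clone $\Pol(\Gamma_2)$ contains only essentially unary operations, and since $\neg\notin\Pol(\Gamma_2)$ it equals $\{\text{projections},\text{const}_0,\text{const}_1\}$; adjoining the two singleton unary constraints collapses the polymorphism clone to the clone of projections, so Schaefer's Theorem~\ref{th:schaefer} yields NP-completeness of $\CSP(\Lambda_2)$. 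By the standard Galois correspondence $\langle\Gamma_2\rangle_{pp}=\Inv(\Pol(\Gamma_2))$ on the finite domain $\{0,1\}$, any fixed relation containing both $(0,\dots,0)$ and $(1,\dots,1)$ is pp-definable from $\Gamma_2$ by a formula whose size depends only on $\Gamma_2$.

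The key device is a pair of ternary auxiliary relations
\[
S_c \;=\; \{(x,a,b)\in\{0,1\}^3 : a=b\}\;\cup\;\{(c,0,1)\}, \qquad c\in\{0,1\}.
\]
Each $S_c$ contains $(0,0,0)$ and $(1,1,1)$ and so is pp-definable from $\Gamma_2$ in constant size. The tuples of $S_c$ never have $(a,b)=(1,0)$, and when $(a,b)=(0,1)$ they force $x=c$; when $a=b$ they place no restriction on $x$. Given $\Cc_{\Lambda_2}=(X,\{0,1\},C)$ with at least one unary constraint (the opposite case is trivially satisfiable since $\Gamma_2$ is closed under both constants), I build an instance $\Cc_2$ of $\CSP(\Gamma_2)$ on variable set $X\cup\{y_0,y_1\}$ plus the pp-definition auxiliaries: keep every $\Gamma_2$-constraint of $C$, and replace each unary constraint $x_i=c$ in $C$ by a pp-copy of $S_c(x_i,y_0,y_1)$. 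Then any $g\in Sol(\Cc_2)$ satisfies $(g(y_0),g(y_1))\in\{(0,0),(0,1),(1,1)\}$; the solutions with $(g(y_0),g(y_1))=(0,1)$ project to $X$ as exactly the satisfying assignments of $\Cc_{\Lambda_2}$, whereas those with $g(y_0)=g(y_1)$ are spurious.

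Set $f=y_0-y_1$, a linear polynomial. By the preceding analysis, $f\in\I_{\Cc_2}$ iff every solution of $\Cc_2$ has $g(y_0)=g(y_1)$, iff $\Cc_{\Lambda_2}$ is unsatisfiable. I then run the hypothesized witness algorithm on $(\Cc_2,f)$ for its promised polynomial time and check in polynomial time whether the returned map is genuinely in $Sol(\Cc_2)$ and satisfies $f(g)\neq 0$; a valid witness forces $g(y_0)=0$ and $g(y_1)=1$ and therefore yields a satisfying assignment of $\Cc_{\Lambda_2}$, whereas the absence of a valid witness means $(\Cc_2,f)$ is a yes-instance of $\IMP_1(\Gamma_2)$, so $\Cc_{\Lambda_2}$ is unsatisfiable. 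This decides the NP-complete language $\CSP(\Lambda_2)$ in polynomial time, giving P=NP. \textbf{The main obstacle} is carving $S_0$ and $S_1$ so that a single linear $f$ separates the ``correctly oriented'' $(0,1)$ solutions from both the spurious ones and the reverse-oriented $(1,0)$ ones; excluding the reverse orientation is possible exactly because $\neg\notin\Pol(\Gamma_2)$, and the Galois correspondence is what lets the asymmetric $S_c$ be expressed inside $\CSP(\Gamma_2)$ with only a constant blow-up per unary constraint.
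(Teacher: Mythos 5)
Your proposal is correct as far as I can verify, but it follows a genuinely different route from the paper's. The paper's proof recycles the variable-identification instance $\Cc_2$ from the second case of the proof of Lemma~\ref{th:necessary} together with $f=x_a-x_b$, and handles a witness of the ``wrong'' orientation ($g(x_a)=1,g(x_b)=0$) adaptively: it glues onto $\Cc_2$ a disjoint copy in which the roles of $x_a$ and $x_b$ are swapped, so that a second call to the hypothesised witness algorithm on the combined instance yields a correctly oriented solution of one of the two copies; this is an elementary two-round argument that uses nothing beyond the identification trick already set up earlier. You instead build a one-shot gadget: from the hypotheses, $\Pol(\Gamma_2)$ is exactly the clone of projections and the two constant operations, so by the Geiger/BKKR $\Inv$--$\Pol$ correspondence (a standard fact, only alluded to in Section~\ref{sect:ppdef_zariski} of the paper, not stated) the switch relations $S_c$, which contain both constant tuples, are pp-definable from $\Gamma_2$; after expanding the pp-definitions (equality conjuncts removed by identifying variables, which is harmless since $S_c$ contains tuples violating every equality among $x_i,y_0,y_1$) one gets a $\CSP(\Gamma_2)$ instance for which $y_0-y_1\in\I_{\Cc_2}$ if and only if $\Cc_{\Lambda_2}$ is unsatisfiable; since $f$ involves only the visible variables, the auxiliary variables introduced by the expansion do not affect membership of $f$ in the vanishing ideal, so the clocked-algorithm argument goes through. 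What each approach buys: the paper's argument is self-contained and elementary, at the price of being adaptive (two oracle calls, the second instance built from the first call's output); your argument needs the Galois correspondence and Schaefer's theorem for the finite language $\Lambda_2$ (finiteness of $\Gamma_2$ is implicitly assumed by the paper as well), but it is non-adaptive and in fact yields a polynomial-time many-one reduction from $\CSP(\Lambda_2)$ to the complement of $\IMP_1(\Gamma_2)$, hence coNP-hardness of $\IMP_1(\Gamma_2)$ outright --- a statement stronger than this lemma and one the paper explicitly leaves open. Given that tension you should double-check the pp-definability step carefully, but I could not find a gap in it: the only hypotheses used are that $\Gamma_2$ is preserved by both constants and by none of $\Majority,\Minority,\Min,\Max,\neg$, which pin down $\Pol(\Gamma_2)$ and hence the co-clone exactly.
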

\begin{proof}
Consider the instance $\Cc_2=(X_2,\{0,1\},C_2)$ of $\CSP(\Gamma_2)$ described in case \eqref{case2} within the proof of Lemma~\ref{th:necessary} and the polynomial $f=x_a-x_b$.  Note that $(\Cc_2,f)$ forms a valid input for problem $\IMP_1(\Gamma_2)$ where we want to test if $f\in \I_{\Cc_2}$. If $f\not\in \I_{\Cc_2}$ then by the assumption we can efficiently compute a solution $g:X_2\to \{0,1\}$ with $g(x_a)\not= g(x_b)$. If $g(x_a)=0 \wedge g(x_b)=1$ then the instance $\Cc_{\Lambda_2}$ from  $\CSP(\Lambda_2)$ has a solution. 

Otherwise suppose $g:X_2\to \{0,1\}$ satisfies $\Cc_2$ with $g(x_a)=1 \wedge g(x_b)=0$. Then we build another instance of $\IMP_1(\Gamma_2)$ as follows. Create a copy of $\Cc_2$ say $\Cc_2'=(X_2',\{0,1\},C_2')$ with $X_2\cap X_2'=\emptyset$ ($C_2$ and $C_2'$ are the same but on disjoint sets of variables). In $\Cc_2'$, replace every occurrence of $x_a'$ with $x_b$ and every occurrence of $x_b'$ with $x_a$ and let $\Cc_2^*=(X_2^*,\{0,1\},C_2^*)$ denote the resulting instance. Consider $\Cc=(X_2\cup X_2^*,\{0,1\},C_2\cup C_2^*)$ and note that $\Cc$ is a valid instance of $\CSP(\Gamma_2)$.
Now note that if $f\not\in I_\Cc$ then there is a feasible solution $g:X_2\cup X_2^*\to \{0,1\}$ such that $g(x_a)\not=g(x_b)$. If $g(x_a)=1 \wedge g(x_b)=0$ then $\Cc_2'$ is satisfiable with $g(x_a')=0 \wedge g(x_b')=1$ and therefore the instance $\Cc_{\Lambda_2}$ from  $\CSP(\Lambda_2)$ has a solution (recall that we are assuming that $g:X_2\to \{0,1\}$ satisfies $\Cc_2$ with $g(x_a)=1 \wedge g(x_b)=0$). Symmetrically, if $g(x_a)=0 \wedge g(x_b)=1$ then the instance $\Cc_{\Lambda_2}$ from  $\CSP(\Lambda_2)$ has a solution ($\Cc_2'$ admits a feasible solution with $g(x_a')=1 \wedge g(x_b')=0$).
\end{proof}


\section{pp-definability and the Elimination of Variables}\label{sect:ppdef_zariski}
The key question on which the proof of Schaefer's Dichotomy Theorem centers is: For a given $\Gamma$, which relations are definable by \emph{existentially quantified $\Gamma$-formulas}? These existentially quantified formulas are known as \emph{pp-definable} relations:
\begin{definition}[\cite{Chen09}]\label{def:pp-def}
  A relation $R\subseteq D^k$ is \textbf{\emph{pp-definable}} (short for \emph{primitive positive definable}) from a constraint language $\Gamma$ if for some $m\geq 0$ there exists a finite conjunction $\mathscr{C}$ consisting of constraints over $\Gamma$ and equalities over $\{x_1,\ldots,x_m,x_{m+1},\ldots,x_{m+k}\}$ such that
  \begin{align}\label{eq:pp-def}
    R(x_{m+1},\ldots,x_{m+k}) & =\exists x_{1}\ldots \exists x_{m}\ \mathscr{C}.
  \end{align}
  That is, $R$ contains exactly those tuples of the form $(\phi(x_{m+1}),\ldots,\phi(x_{m+k}))$ where $\phi$ is an assignment that can be extended to a satisfying assignment of $\mathscr{C}$. We use $\langle \Gamma\rangle$ to denote the set of all relations that are pp-definable from $\Gamma$.
\end{definition}
%
The notion of \emph{pp-definability} for relations permits a constraint language to ``simulate'' relations that might not be inside the constraint language.
The tractability of a constraint language $\Gamma$ is characterized by $\langle \Gamma\rangle$ and justifies focusing on the sets $\langle \Gamma\rangle$.
The set of relations $\langle \Gamma\rangle$ is in turn characterized by the polymorphisms of $\Gamma$.

In the following, we will explore the correspondence between pp-definability and \emph{elimination theory} in algebraic geometry (see e.g.~\cite{Cox:2015}).
It turns out that every pp-definable relation is equal to the smallest affine algebraic variety containing the set of solutions defined by the pp-definable relation, also known as the \emph{Zariski closure}. \GB bases can be used to compute the corresponding ideal (see Theorem~\ref{th:elim} and Lemma~\ref{th:pp-zariski}).
 This will allow us to construct a “dictionary” between geometry and algebra, whereby any statement about pp-definability (that can be seen as projection) can be translated into a statement about ideals (and conversely).\footnote{Note that the operation of taking image under a coordinate projection corresponds to existential quantification. 
 For example,
suppose that $R(x)\Leftrightarrow\exists y\ S(x,y)$. Let $\mathcal S$ be the set of pairs $(x,y)$ such that $S(x,y)$. Then $\mathcal{R}$ (i.e. the set of $x$ such that $R(x)$), is the projection of $\mathcal{S}$ into the first coordinate.}

\subsection{The Extension Theorem}\label{sect:extension}
We recall the notion of \emph{elimination ideal} (see \cite{Cox:2015}) from algebraic geometry.
\begin{definition}\label{def:elim}
  Given $\I=\langle p_1,\ldots,p_s\rangle \subseteq \Field[x_1,\ldots,x_{n}]$, for $0\leq m\leq n$, the $m$-th \emph{elimination ideal} $\I_m$ is the ideal of $\Field[x_{m+1},\ldots,x_{n}]$ defined by
  \begin{align*}
    \I_m &= \I \cap \Field[x_{m+1},\ldots,x_{n}].
  \end{align*}
\end{definition}
Thus, $\I_m$ consists of all consequences of $p_1=\cdots=p_s=0$ which eliminate the variables $x_1,\ldots, x_m$.
\begin{theorem}[\textbf{The Elimination Theorem}, \cite{Cox:2015}]\label{th:elim}
  Let $\I\subseteq \Field[x_1,\ldots ,x_n]$ be an ideal and let $G$ be a \GB basis of $\I$ with respect to lex order where $x_1>x_2>\cdots>x_n$. Then for every $0\leq m\leq n$, the set
  \begin{align*}
    G_m & = G \cap \Field[x_{m+1},\ldots ,x_n].
  \end{align*}
is a \GB basis of the $m$-th elimination ideal $\I_m$.
\end{theorem}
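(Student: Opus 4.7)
The plan is to verify the definition of Gröbner basis directly for $G_m$ with respect to the ideal $\I_m$. First I observe that $G_m \subseteq \I_m$ is immediate: every $g \in G_m$ lies in $G \subseteq \I$ and, by the very definition of $G_m$, also in $\Field[x_{m+1},\ldots,x_n]$, so $g \in \I \cap \Field[x_{m+1},\ldots,x_n] = \I_m$. Hence the content of the theorem is that
\[
  \langle \LT(g) : g \in G_m \rangle \;=\; \langle \LT(\I_m) \rangle
\]
as ideals of $\Field[x_{m+1},\ldots,x_n]$.

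The forward inclusion $\langle \LT(g) : g \in G_m \rangle \subseteq \langle \LT(\I_m) \rangle$ is clear since each $g \in G_m$ belongs to $\I_m$. For the reverse inclusion, I would take any nonzero $f \in \I_m$ and show that $\LT(f)$ is divisible by $\LT(g)$ for some $g \in G_m$; this suffices, because generators of $\langle \LT(\I_m) \rangle$ are the monomials $\LT(f)$ with $f \in \I_m$. Since $f \in \I$ and $G$ is a \GB basis of $\I$, there exists $g \in G$ such that $\LT(g)$ divides $\LT(f)$. The problem reduces to showing that this $g$ necessarily lies in $G_m$, i.e. that $g$ itself involves no variable from $\{x_1,\ldots,x_m\}$.

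This is the step that genuinely uses the fact that the order is lex with $x_1 > x_2 > \cdots > x_n$. Since $f \in \Field[x_{m+1},\ldots,x_n]$, the monomial $\LT(f)$ contains none of $x_1,\ldots,x_m$; therefore the divisor $\LT(g)$ also contains none of $x_1,\ldots,x_m$. Now I invoke the key property of lex order: if some other term $\tau$ of $g$ involved any variable from $\{x_1,\ldots,x_m\}$, then in lex order $\tau >_{\mathsf{lex}} \LT(g)$, contradicting the definition of leading term. Hence every term of $g$ avoids $x_1,\ldots,x_m$, so $g \in \Field[x_{m+1},\ldots,x_n]$ and thus $g \in G \cap \Field[x_{m+1},\ldots,x_n] = G_m$, as required.

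The main obstacle, and really the only nontrivial point, is this last observation about lex order; everything else is essentially bookkeeping around the definitions. I would emphasize that the argument breaks for grlex or other total-degree orders, where a polynomial can have leading monomial in $\Field[x_{m+1},\ldots,x_n]$ while containing lower-order terms that involve $x_1,\ldots,x_m$ — this is precisely why the Elimination Theorem is stated for lex.
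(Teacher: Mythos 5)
Your proof is correct: the paper itself does not prove this statement but imports it from the cited reference \cite{Cox:2015}, and your argument (reduce to showing $\LT(f)$, $f\in\I_m$, is divisible by $\LT(g)$ for some $g\in G$, then use the lex-order property that a leading term free of $x_1,\ldots,x_m$ forces the whole polynomial to be free of them) is precisely the standard proof given there. No gaps; the lex-specific step you flag is indeed the only place the choice of order matters.
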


We will call a solution $\textbf{a}_m=(a_{m+1},\ldots, a_n)\in \Variety{\I_m}$ a \emph{partial solution} of the original system of equations. In general, it is not always possible to extend a partial solution $\textbf{a}_m$ (\emph{extension step}) to a complete solution in $\Variety{\I}$ (see e.g. \cite{Cox:2015}, Chapter 2). However when $\I_\Cc$ is defined as in~\eqref{eq:IC} then the extension step is always possible as shown by the following theorem.

\begin{theorem}[\textbf{The Extension Theorem}]\label{th:ext}
Let $\Cc$ be an instance of the $\CSP(\Gamma)$ and $\I$ defined as in~\eqref{eq:IC}.
For any $m\geq 0$ let $\I_m$ be the $m$-th elimination ideal (for any given ordering of the variables).
  Then, for any partial solution $\textbf{a}_m=(a_{m+1},\ldots, a_n)\in \Variety{\I_m}$ there exists an extension $c\in \Field^{m}$ such that $(c,b)\in \Variety{\I}$.
\end{theorem}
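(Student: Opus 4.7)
The plan is to show that $\Variety{\I_m}$ coincides with the coordinate projection $\pi(\Variety{\I})$, where $\pi:\Field^n\to\Field^{n-m}$ drops the first $m$ coordinates; the easy inclusion $\pi(\Variety{\I})\subseteq \Variety{\I_m}$ is immediate from Definition~\ref{def:elim}, and the content of the theorem is the reverse inclusion. The argument uses only three ingredients: that $\I_\Cc$ is radical (equation~\eqref{eq:ICradical}), that the domain polynomials $x_i(x_i-1)$ belong to $\I_m$ for every $i>m$, and an elementary separation argument for finite Boolean sets. In particular, I would \emph{not} invoke the Closure Theorem from elimination theory, which normally requires working over an algebraically closed field.

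First I would set up the structural facts. By radicality of $\I_\Cc$ and the Strong Nullstellensatz~\eqref{eq:strong_nstz}, $\I_\Cc=\Ideal{Sol(\Cc)}$. The domain polynomials $x_i(x_i-1)\in \I_\Cc$ for $i>m$ involve only the unquantified variables, so they lie in $\I_m$; consequently any $\textbf{a}_m=(a_{m+1},\ldots,a_n)\in \Variety{\I_m}$ automatically satisfies $a_i\in\{0,1\}$. The finite set $S\mydef\pi(Sol(\Cc))\subseteq \{0,1\}^{n-m}$ is the set of projections of feasible solutions, and proving the theorem amounts to showing $\Variety{\I_m}\subseteq S$.

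I would then argue by contradiction. Suppose $\textbf{a}_m\in \Variety{\I_m}\setminus S$. For each $\textbf{s}\in S$, pick a coordinate $i_{\textbf{s}}\in\{m+1,\ldots,n\}$ with $s_{i_{\textbf{s}}}\neq a_{i_{\textbf{s}}}$, and define $p_{\textbf{s}}(x)\mydef x_{i_{\textbf{s}}}$ if $s_{i_{\textbf{s}}}=0$, otherwise $p_{\textbf{s}}(x)\mydef 1-x_{i_{\textbf{s}}}$. By construction $p_{\textbf{s}}$ vanishes at $\textbf{s}$ but equals $1$ at $\textbf{a}_m$ (using that both $s_{i_{\textbf{s}}}$ and $a_{i_{\textbf{s}}}$ lie in $\{0,1\}$). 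The product $f(x)\mydef\prod_{\textbf{s}\in S}p_{\textbf{s}}(x)$ then vanishes on $S=\pi(Sol(\Cc))$, hence on all of $Sol(\Cc)$, so $f\in \Ideal{Sol(\Cc)}=\I_\Cc$; since $f\in \Field[x_{m+1},\ldots,x_n]$, in fact $f\in \I_m$. Yet $f(\textbf{a}_m)=1\neq 0$, contradicting $\textbf{a}_m\in \Variety{\I_m}$. Hence $\Variety{\I_m}=S=\pi(\Variety{\I_\Cc})$, and the required extension $c\in \Field^m$ is any preimage of $\textbf{a}_m$ under $\pi$ restricted to $Sol(\Cc)$.

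The only subtlety to track is that the separation construction requires the partial solution to lie in $\{0,1\}^{n-m}$, and this is precisely guaranteed by the presence of the domain polynomials in $\I_m$. No deep obstacle arises; the whole proof is essentially the observation that, because the combinatorial setting forces variety-points to be Boolean, finite varieties are automatically Zariski closed and projections of varieties remain varieties, without any algebraic closure assumption on $\Field$.
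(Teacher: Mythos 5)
Your proof is correct and follows essentially the same route as the paper: both argue by contradiction, exhibit a polynomial in $\Field[x_{m+1},\ldots,x_n]$ that vanishes on the projection of $Sol(\Cc)$ but not at $\textbf{a}_m$, and conclude via radicality and the Strong Nullstellensatz that this polynomial lies in $\I_m$, contradicting $\textbf{a}_m\in \Variety{\I_m}$. The only difference is cosmetic: the paper's separator is the single indicator polynomial $\prod_{i>m}\prod_{j\in D\setminus\{a_i\}}(x_i-j)$, which works for any $\textbf{a}_m\in\Field^{n-m}$ and any domain, so it does not need your preliminary step that the domain polynomials force $\Variety{\I_m}\subseteq\{0,1\}^{n-m}$ (a step you in fact only need for nonvanishing, and $a_{i_{\textbf{s}}}\neq s_{i_{\textbf{s}}}$ already gives $p_{\textbf{s}}(\textbf{a}_m)\neq 0$).
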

\begin{proof}
Note that if $\Variety{\I}=\emptyset$ then by the Weak-Nullstellensatz~\eqref{eq:weak_nstz} we have $1\in \I$ and therefore $1\in \I_m$ which implies by~\eqref{eq:weak_nstz} that $\Variety{\I_m}=\emptyset$. If the latter holds then the claim is vacuously true. Otherwise, $\Variety{\I_m}\not=\emptyset$ and $\Variety{\I}\not=\emptyset$. We assume this case in the following.

  By contradiction, assume that $\textbf{a}_m=(a_{m+1},\ldots, a_n)\in \Variety{\I_m}$ but $\textbf{a}_m$ does not extend to a feasible solution from $\Variety{\I}$. Then consider the following polynomial:
  \begin{align*}
   q(x_{m+1},\ldots,x_n)= \prod_{i\in \{m+1,\ldots,n\}}\ \prod_{j\in D\setminus\{a_i\}}(x_i-j).
  \end{align*}
  Note that
  \begin{align}\label{eq:ext_0}
  q(a_{m+1},\ldots, a_n)\not =0,
  \end{align}
  and any partial solution $(b_{m+1},\ldots, b_n)$ that can be extended to a feasible solution (there is one since we are assuming $\Variety{\I}\not=\emptyset$) would make $q(b_{m+1},\ldots, b_n) =0$. It follows that
  \begin{align}\label{eq:ext_1}
    q(x_{m+1},\ldots,x_n)&\in\Ideal{\Variety{\I}}\cap \Field[x_{m+1},\ldots,x_{n}].
  \end{align}
  By the definition of $\I$ and Theorem~\ref{th:nullstz} we have that
  \begin{align}\label{eq:ext_2}
    \Ideal{\Variety{\I}} &= \I.
  \end{align}
  By \eqref{eq:ext_1} and \eqref{eq:ext_2} it follows that
  \begin{align}\label{eq:ext_3}
    q(x_{m+1},\ldots,x_n)&\in \I_m,
  \end{align}
  and \eqref{eq:ext_0} implies that $\textbf{a}_m=(a_{m+1},\ldots, a_n)\not \in \Variety{\I_m}$, a contradiction.
\end{proof}

\subsection{pp-definability and Elimination Ideals}
In the following we analyze the relationship between pp-definability and Elimination Ideals. 
Consider any pp-definable relation $\mathcal{R}\subseteq D^k$  from a constraint language $\Gamma$ as given in \eqref{eq:pp-def}.  Then, for some $m\geq 0$ there exists a finite conjunction $\mathscr{C}$ consisting of constraints over $\Gamma$ and equalities over variables $\{x_1,\ldots,x_m,x_{m+1},\ldots,x_{m+k}\}$ such that $R(x_{m+1},\ldots,x_{m+k})  =\exists x_{1}\ldots \exists x_{m}\ \mathscr{C}$.

Then, we can find a set $P$ of polynomials (including domain  polynomials) $P=\{p_1,\ldots, p_s: p_i\in\Field[x_1,\ldots,x_m,x_{m+1},\ldots,x_{m+k}]\}\cup\{\prod_{j\in D}(x_i-j):i\in [k+m]\}$
such that $S=\Variety{P}$ is the set of satisfying assignments of $\mathscr{C}$ and $\Ideal{S}=\Ideal{P}$ (see Section~\ref{sect:idealCSP}).

%

\begin{lemma}\label{th:pp-zariski}
  For any pp-definable relation $\mathcal{R}$ (as defined in \eqref{eq:pp-def}) we have that
  $\mathcal{R}=\Variety{\I_m}$, where $\I_m$ is the $m$-th elimination ideal of $\Ideal{S}$, i.e. $\I_m= \Ideal{S} \cap \Field[x_{m+1},\ldots,x_{m+k}]$.
\end{lemma}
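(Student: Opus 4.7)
The plan is to prove both inclusions $\mathcal{R}\subseteq \Variety{\I_m}$ and $\Variety{\I_m}\subseteq \mathcal{R}$, exploiting the fact that $\mathcal{R}$ is precisely the projection of $S=\Variety{\Ideal{S}}$ onto the coordinates $x_{m+1},\ldots,x_{m+k}$ (by Definition~\ref{def:pp-def}), and that the Extension Theorem~\ref{th:ext} guarantees that every partial solution in $\Variety{\I_m}$ extends to a full solution in $\Variety{\Ideal{S}}$.

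For the inclusion $\mathcal{R}\subseteq \Variety{\I_m}$, I would start with an arbitrary $(a_{m+1},\ldots,a_{m+k})\in \mathcal{R}$. By Definition~\ref{def:pp-def} there exist $a_1,\ldots,a_m\in D$ such that the full tuple $(a_1,\ldots,a_{m+k})$ lies in $S$. Since $\I_m\subseteq \Ideal{S}$ and any $p\in \I_m\subseteq \Field[x_{m+1},\ldots,x_{m+k}]$ vanishes on $S$, in particular $p(a_{m+1},\ldots,a_{m+k})=0$ because $p$ does not depend on $x_1,\ldots,x_m$. This directly yields $(a_{m+1},\ldots,a_{m+k})\in \Variety{\I_m}$.

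For the reverse inclusion $\Variety{\I_m}\subseteq \mathcal{R}$, I would invoke the Extension Theorem~\ref{th:ext}. The set $P$ is constructed in the paragraphs preceding the statement exactly so that the associated combinatorial ideal $\Ideal{P}$ satisfies $\Ideal{S}=\Ideal{P}$ and contains the domain polynomials $\prod_{j\in D}(x_i-j)$. Thus the hypotheses of Theorem~\ref{th:ext} are met for the ideal $\Ideal{S}$, and any $(a_{m+1},\ldots,a_{m+k})\in \Variety{\I_m}$ extends to a full point in $\Variety{\Ideal{S}}=S$. The projection of this point onto the last $k$ coordinates is, by Definition~\ref{def:pp-def}, an element of $\mathcal{R}$, completing the proof.

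The only non-routine step is the second inclusion, since without the Extension Theorem one would only know that $\Variety{\I_m}$ contains the Zariski closure of the projection, which in general can be strictly larger than the projection itself. The crucial point enabling the proof is that combinatorial ideals over a finite domain $D$ (with domain polynomials present in the generators) are zero-dimensional and radical; this is precisely what makes Theorem~\ref{th:ext} applicable and upgrades the containment to an equality. No delicate calculations are required beyond recognizing that the hypotheses of Theorem~\ref{th:ext} apply to $\Ideal{S}$.
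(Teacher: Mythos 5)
Your proof is correct and follows essentially the same route as the paper: the easy inclusion $\mathcal{R}=\pi_m(S)\subseteq\Variety{\I_m}$ (which the paper cites from~\cite{Cox:2015} and you verify directly), combined with the Extension Theorem~\ref{th:ext} to show that every point of $\Variety{\I_m}$ extends to a full solution and hence lies in the projection $\mathcal{R}$. Your closing remark about why Theorem~\ref{th:ext} applies (domain polynomials forcing radicality, so $\Ideal{\Variety{\Ideal{S}}}=\Ideal{S}$) matches the paper's implicit justification.
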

\begin{proof}
We define the projection of the affine variety $S=\Variety{P}$. We eliminate the first $m$ variables $x_1,\ldots,x_m$ by considering the \emph{projection map} 
$\pi_m: \Field^{m+k}\to \Field^k$, 
which sends $(a_1,\ldots,a_{m+k})$ to $(a_{m+1},\ldots,a_{m+k})$. By applying $\pi_m$ to $S$ we get $\pi_m(S)\subseteq \Field^k$. Note that the projection of $S$ corresponds to the pp-definable relation $\mathcal{R}$:
$\pi_m(S)=\mathcal{R}$.
We can relate  $\pi_m(S)$ to the $m$-th elimination ideal:
\begin{lemma}[See \cite{Cox:2015}, Sect. 2, Ch. 3]
\begin{align*}
  \pi_m(S) & \subseteq \Variety{\I_m}.
\end{align*}
\end{lemma}

Using the lemma above we can write $\pi_m(S)=\mathcal{R}$ as follows:
\begin{align*}
  \mathcal{R}=\pi_m(S) & =\{(a_{m+1},\ldots,a_{m+k})\in \Variety{\I_m}\mid \exists a_1,\dots,a_m\in \Field \text{ s.t. } (a_1,\ldots,a_{m+k})\in S\}.
\end{align*}
Note that $\pi_m(S)$ consists exactly of the partial solutions from $\Variety{\I_m}$ that extend to complete solutions. However, by the Extension Theorem~\ref{th:ext}, there is no partial solution from $\Variety{\I_m}$ that do not extend to complete solution. It follows that the pp-definable relation $\mathcal{R}$ in~\eqref{eq:pp-def} is exactly $\Variety{\I_m}$.
\end{proof}

By Lemma~\ref{th:pp-zariski} we see a realization of a well-known fact: quantifier-free definable sets are exactly the constructible sets in the Zariski topology (finite Boolean combinations of polynomial equations).
Indeed note that pp-definable relations are logically equivalent to a quantifier-free system of polynomial equations (from the elimination ideal). \GB basis (see Lemma~\ref{th:elim} and Lemma~\ref{th:pp-zariski}) is a way to compute this ``quantifier-free'' system of polynomials that are logically equivalent to pp-definable relations.

\section{Conclusions and Future Directions}
In this paper we identify restrictions on Boolean constraint languages $\Gamma$ which are sufficient and necessary to ensure the $\IMP_d(\Gamma)$ tractability for any fixed $d$. This result is obtained by combining techniques from both theory of $\CSP$s and computational algebraic geometry. We believe that it gives new insights into the applicability of \GB basis techniques to combinatorial problems.

Furthermore, this result can be applied for bounding the $\sos$ bit complexity and gives  necessary and sufficient conditions for the efficient computation of $d$-th Theta Body SDP relaxations of combinatorial ideals, identifying therefore the borderline of tractability for Boolean constraint language problems.

As it happened for $\CSP$ theory (see \cite{Bulatov17} and ~\cite{Zhuk17}), it would be very interesting to extend our dichotomy result to the finite domain case and understand which of the necessary/sufficient conditions for tractability of $\CSP(\Gamma)$ translate to the membership testing tractability.
%

With this aim we mention a recent result \cite{BMmfcs20} by Bharathi and the author.
For the ternary domain, we consider problems constrained under the dual discriminator polymorphism and prove that we can find the reduced \GB basis of the corresponding combinatorial ideal in polynomial time. This ensures that we can check if any degree $d$ polynomial belongs to the combinatorial ideal or not in polynomial time, and provide membership proof if it does.
After the publication of \cite{BMmfcs20}, Bulatov and Rafiey have obtained new exciting results \cite{bulatov_rafiey20} by continuing this line of research and extending our results beyond Boolean domains in several ways. 
For example, in \cite{bulatov_rafiey20} it is shown that the $\IMP_d$ is solvable in polynomial time for any finite domain for problems constrained under the dual discriminator polymorphism. However, their approach  only works for the decision problem and does not allow one to find a \GB basis for the original problem (or a proof of ideal membership). On the other hand, although restricted to the ternary domain, in \cite{BMmfcs20} it is shown how to compute a \GB basis and therefore a proof of the IMP in polynomial time. It remains an interesting open problem to obtain a \GB basis for such problems over a general finite domain.
%

%
More in general, the study of $\CSP$-related Ideal Membership Problems is in its early stages and multiple directions are open. A very natural direction would be to expand the range of tractable $\IMP$s. Candidates for such expansions are readily available from the existing results about $\CSP$s. 
%
%
%
\paragraph{Acknowledgments.}
I'm indebted with Andrei Bulatov for suggesting several useful comments and for spotting several gaps in the earlier version of the paper. 
I'm grateful to Standa \v{Z}ivn\'{y} for many stimulating discussions we had in Lugano and Oxford. I thank Arpitha Prasad Bharathi for several useful comments.

This research was supported by the Swiss National Science Foundation project 200020-169022 ``Lift and Project Methods for Machine Scheduling Through Theory and Experiments''.

{\small
\bibliographystyle{abbrv}
\bibliography{ref}
}

\end{document}